\documentclass[11pt]{amsart}   % includes 11pt font size (default 10pt)

\usepackage{verbatim}
\usepackage{amssymb,amsthm,amsmath}
\usepackage{color}
\definecolor{darkblue}{rgb}{0, 0, .4}
\definecolor{grey}{rgb}{.7, .7, .7}

\usepackage{ifpdf}
\ifpdf
  \usepackage[pdftex]{epsfig}

  \usepackage{lscape}

  \usepackage[breaklinks]{hyperref}
  \hypersetup{
            colorlinks=true,
            linkcolor=darkblue,
            anchorcolor=darkblue,
            citecolor=darkblue,
            pagecolor=darkblue,
            urlcolor=darkblue,
            pdftitle={},
            pdfauthor={}
  }
\else
  \usepackage[dvips]{epsfig}
  \usepackage{lscape}

  \newcommand{\href}[2]{#2}
  \newcommand{\url}[2]{#2}
\fi

\newtheorem{theorem}{Theorem}[section]
\newtheorem{lemma}[theorem]{Lemma}

\theoremstyle{definition}
\newtheorem{definition}[theorem]{Definition}
\newtheorem{example}[theorem]{Example}

\theoremstyle{remark}
\newtheorem{remark}[theorem]{Remark}

\numberwithin{equation}{section}

\theoremstyle{theorem}
\newtheorem{corollary}[theorem]{Corollary}

\newcommand{\f}[0]{\varphi}

\usepackage{graphicx}
\input xy 
\xyoption{all}

\usepackage{tikz}
\usetikzlibrary{decorations.pathreplacing}
\usetikzlibrary{trees}
\usepackage{ifthen}

\renewcommand{\S}{\mathcal{S}}
\newcommand{\SN}{\mathfrak{S}}
\renewcommand{\S}{\mathfrak{S}}

\newcommand{\pf}[1]{\pi|_{[#1]}}

\renewcommand{\P}[0]{\mathcal{P}}

\usetikzlibrary{patterns}

\renewcommand{\a}{{\centerdot}}
\newcommand{\BTM}{\mathrm{BTM}}
\newcommand{\BTMW}{\mathrm{BTM_{WORD}}}
\newcommand{\BTMP}{\mathrm{BTM_{PATH}}}
\newcommand{\pgeq}{\succcurlyeq}

\begin{document}

\title{Reselection in the game of best choice}

\begin{abstract}
We investigate a remarkable probability distribution on the symmetric
group, due to Steck from the early 1970's, arising from a natural process that
intertwines continuous and discrete selections for the values and positions,
respectively, of a permutation.  Steck used a matrix determinant to express his
distribution, whereas we contribute new combinatorial formulas for it in terms
of ``bottom-to-top maxima'' (that are simply the left-to-right maxima of the
inverse) permutation statistics.  These formulas specialize, in the case of the
identity permutation, to a result of Pitman--Stanley from the late 1990's.

\medskip \noindent
We then use the Steck distribution to define a game of best choice (secretary
problem variation) that incorporates a filtering process for the pool of
candidates over time.  We solve the model for the case where there is a single
filtering step.  It turns out that the probability of winning the game under
optimal play is similar to the classical model ($1/e$, asymptotically), but
that the interviewer must employ a different (non-positional) strategy in order
to attain it.  The optimal strategy depends on the relationship between
interview position and the next bottom-to-top maximum value after the filtering
step.  Among other results, we prove that the optimal strategy always
transitions from rejection to acceptance between positions $(1/e)$ and $(1/e) +
(1 - 1/e)y$ as a proportion of the total candidates considered, where $y$ is
the proportion of unfiltered candidates.
\end{abstract}

\author{Dillon Hanson}
\address{Department of Mathematics, Jacksonville University,
Jacksonville, Florida 32211, USA}
\author[Brant Jones]{Brant Jones}
\address{Department of Mathematics and Statistics, MSC 1911, James Madison University, Harrisonburg, VA 22807}
\email{\href{mailto:jones3bc@jmu.edu}{\texttt{jones3bc@jmu.edu}}}
\urladdr{\url{http://educ.jmu.edu/\~jones3bc/}}
\author{Hyejin Kim} %\ \ (University of Michigan-Dearborn)\\
\address{Department of Mathematics and Statistics, University of Michigan--Dearborn, Dearborn, MI 48128, USA}
%\keywords{}

\date{\today}

\maketitle

%%%%%%%%%%%%%%%%%%%%%%%%%%%%%%%%%%%%%%%%%%%%%%%%%%%%%%%%%%%%%%%%%%%%%
%  Section
%%%%%%%%%%%%%%%%%%%%%%%%%%%%%%%%%%%%%%%%%%%%%%%%%%%%%%%%%%%%%%%%%%%%%
\bigskip
\section{Introduction}

We begin with a question:  Why does the classical game of best choice (also
known as the ``secretary problem'') assume a uniform distribution for the
candidate rankings?  To answer this, consider the game as an elementary
discrete probability model with the schematic shown in Figure~\ref{f:scheme}.
There is a domain of candidates, illustrated in blue, that is finite but much
larger than the number $N$ of candidates we are willing to interview.  Among
applications that have been suggested in popular media, the domain could be
houses within a given price range in our zip code, potential romantic partners
in our city, or particular items that we are interested in purchasing through
an auction website.  At each time step $t = 1, \ldots, N$, we ``conduct an
interview'' by sampling the domain (uniformly) and {\bf projecting} that sample
to a permutation of size $t$ using the relative ranks of the candidates
interviewed so far as the values.  Most interesting domains probably have
various ``coordinates'' potentially relevant to the interviewer's ranking
(e.g., in the hiring story: years of education, details of prior experience,
salary history, etc.) that are fully revealed only during an interview.  In the
classical model, this sample-and-project process is simply repeated $N$ times
by an interviewer that is assumed to have no control over the static domain,
and so the model acquires in this way a {\em uniform distribution} on $\S_N$,
the permutations of size $N$ that represent all possible relative rankings.
This is the point from which classical analysis ensues, leading to the famous
$1/e$ result.  See \cite{gilbert--mosteller,freeman,ferguson} for surveys of
this original problem and related results.  The essence of this problem also
arises naturally in theoretical computer science and auction theory, and
continues to attract attention (see e.g.
\cite{milenkovic,pinsky,buchbinder--etal,kleinberg08}).

\begin{figure}[t]
\[ \scalebox{1.1}{ \begin{tikzpicture}
\draw [pattern=north west lines, pattern color=blue, minimum size=4.5mm] (0,10) -- (3,11) -- (10,11) -- (7,10) -- (0,10);
\draw (2.5,10.4) node [circle, fill=blue, inner sep=0pt, minimum size=4pt, draw] {} ;
\draw [dotted] (2.5, 10.4) -- (4.4,8);
\draw (11.3,10.5) node {\parbox{1.2in}{\tiny (finite) domain of candidates \\ in (high-dimensional) ``reality''}};
\draw [->] (11,9.8) -- (11,8.5);
\draw (9.5, 9.2) node {\parbox{0.9in}{\tiny sample and {\it project}:  compare latest sample to each prior sample}};
\draw [->,color=black!60!green] (11.2,8.5) -- (11.2,9.8);
\draw [color=black!60!green] (12.8, 9.2) node {\parbox{0.95in}{\tiny refine
    ranking criteria and {\it reselect} domain:  improve candidate pool}};  
\draw (11.3,8) node {\parbox{1.0in}{\tiny permutations of size $N$ \\(relative rankings / time)}};
\draw (4.4,8) node [circle, fill=black, inner sep=0pt, minimum size=3pt, draw] {} ;
\draw (4.7,8) node [circle, fill=black, inner sep=0pt, minimum size=3pt, draw] {} ;
\draw (5.0,8) node [circle, fill=black, inner sep=0pt, minimum size=3pt, draw] {} ;
\draw (5.7,8) node [circle, fill=black, inner sep=0pt, minimum size=3pt, draw] {} ;
\draw [->] (4,8) -- (7,8);
\draw (5.5,7.5) node {\tiny time};
\draw [pattern=north east lines, pattern color=black!60!green] (5.7,10.4) circle [x radius=1.1,y radius=0.2];
\draw (6.3,10.4) node [circle, fill=black!60!green, inner sep=0pt, minimum size=4pt, draw] {} ;
\draw [dotted] (6.3, 10.4) -- (5.7,8);
\draw [color=gray] (5.7,8) .. controls (5.35,7.8) .. (5.0,8);
\draw [color=gray] (5.7,8) .. controls (5.2,7.7) .. (4.7,8);
\draw [color=gray] (5.7,8) .. controls (5.05,7.6) .. (4.4,8);
\draw [thick, dotted, color=black!60!green] (4.6,10.4) -- (5.7,8);
\draw [thick, dotted, color=black!60!green] (6.8,10.4) -- (5.7,8);
\draw [thick, dotted, color=blue] (0,10) -- (4.4,8);
\draw [thick, dotted, color=blue] (3,11) -- (4.4,8);
\draw [thick, dotted, color=blue] (10,11) -- (4.4,8);
\draw [thick, dotted, color=blue] (7,10) -- (4.4,8);
\draw [color=black!60!green] (4.6,10.4) -- (5.7,8);
\draw [color=black!60!green] (6.8,10.4) -- (5.7,8);
\draw [color=blue] (0,10) -- (4.4,8);
\draw [color=blue] (3,11) -- (4.4,8);
\draw [color=blue] (10,11) -- (4.4,8);
\draw [color=blue] (7,10) -- (4.4,8);
\end{tikzpicture} }\]
\caption{Origin of the symmetric group distribution in the game of best choice} \label{f:scheme}
\end{figure}

As we ponder the origin of this uniform distribution, however, as a sequence of
projections from a high-dimensional domain over time, this setup begins to seem
remarkably unstable.  In \cite{weighted}, we have already found some
quantitative examples where the classical game arises {\em discontinuously} as
a limit of some natural families of non-uniform weighted models.  In
particular, any extrinsic change in the domain of candidates (e.g.  candidates
entering or exiting the domain or changes arising from trends in general market
rates or prices) can significantly change the projected distribution of
relative rankings, even if we continue to sample each domain uniformly at each
time step.  Said another way, the rationale for assuming a uniform distribution
on the permutations seems to stem from the reasonable goal of modeling a
uniform distribution of candidates {\em in space}; however, this assumption
simultaneously requires the candidates to be uniformly distributed {\em in
time}, which is a much less realistic assumption.  Among other things, it
implies that the interviewer cannot have any influence over the candidate pool
(because otherwise, they would try to improve it, resulting in permutations
that tend to be increasing rather than uniform).  It also means that the
candidate pool should remain exactly the same over the entire course of the $N$
interviews, a process that may take months or years in some of the popular
applications and may be unrealistic even for theoretical computer science
applications of the problem.

In this paper, we develop an alternative model based on the idea that an
interviewer can and does change the domain intrinsically as they conduct
interviews.  To capture this, we propose extending the model to include a
feedback loop as illustrated in green in Figure~\ref{f:scheme}.  Our
justification for this stems from the observation that in many domains, the
interviewer must make {\em increasingly complex comparisons} among the
candidates they have seen.  Although a cursory evaluation may suffice to
compare two candidates, the process of precisely ranking ten initial
candidates, say, as the algorithm demands, becomes progressively more involved.
The criteria used to rank candidates at the beginning of the game may not be
sufficient to distinguish individual ranks as the list of interviewed
candidates grows.  To make these finer rank comparisons, the interviewer must
develop new criteria which can only come from a deeper understanding of the
domain.  Therefore, {\em some domain learning process for the interviewer seems
inherent in the ranking process required at each step by the classical
algorithm}.  As the interviewer applies this learning process to continue to be
able to completely rank candidates at each step, they should simultaneously be
using this information to hone the process for obtaining candidates in the
first place and continuously improve the pool under consideration.  In our
model, we refer to this honing process as the {\bf reselect} step.

In Section~\ref{s:2} we introduce an underlying distribution on the symmetric
group that enables reselection to occur.  Although we derived this
independently, motivated by our philosophical investigations of the classical
game, it turns out that this distribution has already appeared before in work
of Steck \cite{steck} and been specialized for study in a polytopal setting by
Pitman--Stanley \cite{pitman--stanley}.  Our development relating this
distribution to the ``bottom-to-top maxima'' (that are simply dual to the usual
``left-to-right maxima'') permutation statistics appears to be new.
The Pitman--Stanley polytope remains an object of active research in combinatorial
geometry (e.g. \cite{meszaros--morales}) and our results suggest new directions
for generalization.

Analyzing the reselect games in full generality seems technically difficult at
present, so starting in Section~\ref{s:3}, we specialize to the case where there is a
single reselection step, occurring at some position $I$ among the $N$
interviews.  The strength of the reselect step is also a parameter of the
model.  In Section~\ref{s:3}, we show that the optimal strategy only depends on
the relationship between the number of candidates interviewed so far and the
value of the next ``bottom-to-top maximum'' rank occurring after the
reselection step.  We find a nonrecursive formula for the probability of
winning the game when accepting various candidate rankings.  To find the
optimal strategy, we then need to compare with the corresponding formula for
the probability of winning the game when rejecting a given candidate ranking.
In Section~\ref{s:4}, we obtain recursive formulas for these, and provide
nonrecursive solutions of them in Section~\ref{s:5}.  Finally, we are able to
solve the one-step reselect models explicitly in Section~\ref{s:6} (see
Theorems~\ref{t:themain} and \ref{t:themainp}) and demonstrate some example
computations.

%%%%%%%%%%%%%%%%%%%%%%%%%%%%%%%%%%%%%%%%%%%%%%%%%%%%%%%%%%%%%%%%%%%%%
%  Section
%%%%%%%%%%%%%%%%%%%%%%%%%%%%%%%%%%%%%%%%%%%%%%%%%%%%%%%%%%%%%%%%%%%%%
\bigskip
\section{The Reselect Distribution}\label{s:2}

Fix a positive integer $N$ and let $\S_N$ denote the symmetric group of rank
$N$.  Consider a random experiment in which $N$ points are selected from each
of $N$ columns.  More precisely, the height of the point in each column is a
{\em continuous} variable selected uniformly from the interval $(0, 1)$; each
of the $N$ columns is {\em discrete} with the same size and alignment as shown
in the left side of Figure~\ref{f:pex}.  Then, we obtain a diagram of $N$
points whose {\bf relative order} (where we interpret positions horizontally
and values vertically) gives rise to a uniformly random permutation of $N$.
For example, the left point configuration in Figure~\ref{f:pex} has relative
order $15324$.

\begin{figure}[h]
\[ \scalebox{1.0}{ 
\begin{tikzpicture}[grow=down]
	\draw (0,0) -- (5,0) -- (5,5) -- (0,5) -- (0,0);
	\draw (1,0) -- (1,5);
	\draw (2,0) -- (2,5);
	\draw (3,0) -- (3,5);
	\draw (4,0) -- (4,5);
	\draw (5.5,0) node {$0$};
	\draw (5.5,5) node {$1$};
	\draw[blue] (.5,0.3) circle (2pt);
	\draw[blue] (1.5,4.8) circle (2pt);
	\draw[blue] (2.5,2.5) circle (2pt);
	\draw[blue] (3.5,0.7) circle (2pt);
	\draw[blue] (4.5,2.7) circle (2pt);
    %\node at (2.5,-0.5) {(a)};
\end{tikzpicture}
	\hspace{0.3in}
\begin{tikzpicture}[grow=down]
	\draw (0,0) -- (1,0) -- (1,0.6) -- (2,0.6) -- (2,1.0) -- (3,1.0) -- (3,1.3) -- (4,1.3) -- (4,1.6) -- (5,1.6) -- (5,5) -- (0,5) -- (0,0);
	\draw (1,0.6) -- (1,5);
	\draw (2,1.0) -- (2,5);
	\draw (3,1.3) -- (3,5);
	\draw (4,1.6) -- (4,5);
	\draw (6.2,0) node {$0$};
	\draw (6.2,5) node {$1$};
	\draw [dotted] (0,0) -- (5.9,0);
	\draw [dotted] (0,0.6) -- (5.9,0.6);
	\draw [dotted] (0,1.0) -- (5.9,1.0);
	\draw [dotted] (0,1.3) -- (5.9,1.3);
	\draw [dotted] (0,1.6) -- (5.9,1.6);
	\draw [dotted] (0,5) -- (5.9,5);
	%\draw (5.5,0) node {$0$};
	\draw (5.5,0.3) node {$x_1$};
	\draw (5.5,0.8) node {$x_2$};
	\draw (5.5,1.15) node {$x_3$};
	\draw (5.5,1.45) node {$x_4$};
	\draw (5.5,3.2) node {$x_5$};
	%\draw (5.5,5) node {$1$};
	\draw[blue] (0.5,4.8) circle (2pt);
	\draw[blue] (1.5,0.8) circle (2pt);
	\draw[blue] (2.5,3.5) circle (2pt);
	\draw[blue] (3.5,1.46) circle (2pt);
	\draw[blue] (4.5,4.7) circle (2pt);
    %\node at (2.5,-0.5) {(b)};
\end{tikzpicture}
}
\]
	\caption{The reselect experiment produces permutations $15324$ and $51324$, respectively}\label{f:pex}
\end{figure}

Next, suppose we modify the experiment such that the heights of the columns are
still aligned at the top of the diagram but their sizes are restricted as we
move to the right, as shown in Figure~\ref{f:pex}.  Specifically, let $x_i$
denote the difference between the bottom of the $(i+1)$st and $i$ columns, with
$x_N$ equal to the size of the last column.  The height of the selected point
in column $i$ is still uniformly distributed on the interval of length $x_i +
x_{i+1} + \cdots + x_N$ that ends at $1$.  In this case, the relative orderings of
the $N$ points, considered as a permutation from $\S_N$, will not have a
uniform distribution; among other things, there will be some bias towards
permutations that are increasing.

\begin{definition}
Fix parameters $x_1, x_2, \ldots, x_{N-1}$ from the interval $[0, 1)$ such that $x_N := 1-\sum_{i=1}^{N-1} x_i$ is positive and consider the probability $R(\pi)$ of obtaining each $\pi \in \S_N$ as a result of the modified experiment described above.  We refer to this as the {\bf reselect distribution}, or for reasons that will become clear shortly, the {\bf Steck distribution} on $\S_N$ with parameters $x_i$.
\end{definition}

Notice that when $x_i = 0$ for all $1 \leq i < N$ with $x_N = 1$, we recover
the uniform distribution on $\S_N$.  In order to find a formula for the
reselect distribution in general, we require some definitions.

First, observe that each column in the reselect experiment is divided into
regions by the heights of the all the subsequent columns; see the dotted lines
in Figure~\ref{f:pex}.  We refer to any region of a column whose size is
$x_i$ as {\bf region $i$}, regardless of which column it appears in.  Given a
configuration of points from the reselect experiment, we define its {\bf
sequence of regions} to be $(r_1, r_2, \ldots, r_N)$ where $r_i$ is the region
of the $i$th largest point from the collection, reading vertically from
smallest to largest.  For example, $(2, 4, 5, 5, 5)$ is the sequence of regions
for the points on the right side of Figure~\ref{f:pex}.  By definition, these sequences must
be weakly increasing.

Next, we fix some permutation $\pi \in \S_N$ and consider the possible
sequences of regions that could conceivably give rise to $\pi$ (as the relative
ordering of a point configuration from the reselect experiment).  To refer to
individual entries of a permutation, we say that $\pi(i)$ is the {\bf value} at
{\bf position} $i$, so $\pi^{-1}(j)$ denotes the {\em position} of value $j$ in
$\pi$.

\begin{definition}\label{d:vra}
Given $\pi \in \S_N$, we say that $(r_1, r_2, \cdots, r_N)$ is a {\bf valid} sequence of regions for $\pi$ if 
\begin{itemize}
    \item each $r_i$ is between $\pi^{-1}(i)$ (that is, the position of value $i$ in $\pi$) and $N$, and
    \item the sequence of $r_i$ are increasing.
\end{itemize}
\end{definition}

In our example, the valid region sequences for $\pi = 51324$ must have 
\[ r_1 \in \{2, 3, 4, 5\}, \ \ \ \ r_2, r_3 \in \{4, 5\}, \ \ \ \ r_4 = r_5 = 5, \ \ \ \text{ with } \ \ \ r_1 \leq r_2 \leq r_3 \leq r_4 \leq r_5, \]
and we observe that the sequence of regions $(2, 4, 5, 5, 5)$ shown in
Figure~\ref{f:pex} does conform to these inequalities.

Now it is straightforward to verify that if $\pi$ arises as the relative
ordering of some point configuration from the reselect experiment, then the
sequence of regions for those points must satisfy the conditions in
Definition~\ref{d:vra}.  Moreover, all point configurations having a sequence
of regions that are valid for $\pi$ (as per Definition~\ref{d:vra}) {\em and}
having the correct relative ordering for points from regions that are repeated,
will produce $\pi$ as the relative ordering.

Next, we describe the minimal valid sequence of regions for a given $\pi \in \S_N$.

\begin{definition}
Given $\pi \in \S_N$, we say that a {\bf bottom-to-top maximum} of $\pi$ is an entry whose {\em position} is to the right of all entries having smaller {\em value}.
\end{definition}
We remark that the (perhaps more familiar) left-to-right maxima of $\pi^{-1}$ form the bottom-to-top maxima of $\pi$ (since inversion reverses the roles of positions and values in a permutation).  For example, the bottom-to-top maxima for each permutation in $\S_3$ are indicated below using dots:
\[ \dot1\dot2\dot3,\ \ \ \ \dot13\dot2,\ \ \ \ 2\dot1\dot3,\ \ \ \ 3\dot1\dot2,\ \ \ \ 23\dot1,\ \ \ \ 32\dot1.  \]
Observe that the entries with value $1$ or position $N$ are always bottom-to-top maxima.

\begin{definition}\label{d:minvr}
Given $\pi \in \S_N$, define $\BTMW(\pi) = (m_1, m_2, \ldots, m_N)$ as follows.
Let $m_1$ be the position $\pi^{-1}(1)$ of the value $1$ in $\pi$.  Then for each $i = 2, 3, \ldots, N$, let
\[ m_i = \begin{cases}
            \pi^{-1}(i) & \text{ if $i$ is a bottom-to-top maximum in $\pi$, } \\
            m_{i-1} & \text{ otherwise. }
\end{cases} \]
\end{definition}

For example, $\pi = 51324$ produces $\BTMW(\pi) = (2, 4, 4, 5, 5)$.  We claim this is the {\em minimal} valid sequence of regions for $\pi$.

\begin{lemma}\label{l:vr}
We have that $(r_1, r_2, \ldots, r_N)$ is a valid sequence of regions for $\pi$
if and only if the $r_i$ are an increasing integer sequence such that $m_i \leq
r_i \leq N$ where $(m_1, m_2, \ldots, m_N) = \BTMW(\pi)$.
\end{lemma}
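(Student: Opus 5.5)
We have to show that, for a weakly increasing integer sequence $(r_1,\ldots,r_N)$ with entries at most $N$, the conditions $r_i \geq \pi^{-1}(i)$ for all $i$ hold if and only if $r_i \geq m_i$ for all $i$. The strategy is to first establish the explicit formula
\[ m_i = \max\{\pi^{-1}(1), \pi^{-1}(2), \ldots, \pi^{-1}(i)\}, \]
so that $m_i$ is the running maximum of the positions of the values $1,\ldots,i$. Both directions then follow from monotonicity.

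The formula is proved by induction on $i$. For $i=1$ it is the definition. For $i \geq 2$, by definition $i$ is a bottom-to-top maximum exactly when $\pi^{-1}(i) > \pi^{-1}(j)$ for all $j<i$, that is, when $\pi^{-1}(i)$ exceeds the running maximum $M_{i-1}=\max_{j<i}\pi^{-1}(j)$. We compare the two cases with Definition~\ref{d:minvr}:
\begin{itemize}
\item if $i$ is a bottom-to-top maximum, then $m_i=\pi^{-1}(i)=M_i$;
\item otherwise $\pi^{-1}(i)<M_{i-1}$ (positions are distinct), so $M_i=M_{i-1}=m_{i-1}=m_i$ by the inductive hypothesis.
\end{itemize}

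Now suppose $(r_i)$ is valid. For $j\le i$, monotonicity gives $r_i\ge r_j\ge\pi^{-1}(j)$. Taking the maximum over $j\le i$ yields $r_i\ge m_i$, and $r_i\le N$ is part of validity. Conversely, if $(r_i)$ is increasing with $m_i\le r_i\le N$, then $r_i\ge m_i\ge \pi^{-1}(i)$, so $(r_i)$ satisfies Definition~\ref{d:vra}.

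Finally, $(m_i)$ is itself increasing with $m_i\le N$, so it is valid and is the pointwise minimal valid sequence, which matches the claim made before the lemma.

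The only real content is the inductive identification of $m_i$ with the running maximum. The one point needing care there is the strictness of the inequality in the definition of bottom-to-top maximum, which is harmless because positions are distinct. Everything else is a direct comparison of inequalities.
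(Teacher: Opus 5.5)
Your proof is correct and uses the same induction as the paper, with the same case split on whether $i$ is a bottom-to-top maximum. The paper's phrase ``value $i$ occurs to the left of the largest bottom-to-top maximum among $\{1,\ldots,i-1\}$'' is exactly your comparison of $\pi^{-1}(i)$ with $\max_{j<i}\pi^{-1}(j)$. Your packaging is cleaner, though: isolating the identity $m_i=\max_{j\le i}\pi^{-1}(j)$ makes the converse direction explicit, namely that any increasing sequence dominating $(m_i)$ satisfies $r_i\ge \pi^{-1}(i)$. The paper's write-up only really spells out that $(m_i)$ is a lower bound and leaves that converse implicit.
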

\begin{proof}
Given a permutation $\pi$, read the values of $\pi$ in ascending order.  The
value $1$ occurs in some position $\pi^{-1}(1) = m_1$ and this is the minimal
possible value for $r_1$, by definition. 

We argue by induction.  Once we have determined the minimal values for $r_1,
r_2, \ldots, r_{i-1}$, we can find the minimal value for $r_i$ as follows.  If
value $i$ occurs to the left of the largest bottom-to-top maxima among the
values $\{1, 2, \ldots, i-1\}$, then $r_i$ must be at least as large as
$r_{i-1}$ in order to be increasing in Definition~\ref{d:vra}.  By our
induction hypothesis we have $r_{i-1} \geq m_{i-1}$, and $m_{i-1} = m_i$ by
Definition~\ref{d:minvr}.

Otherwise, the value $i$ occurs to the right of the last bottom-to-top maximum,
so forms a new bottom-to-top maximum itself.  In this case, $r_i$ must be
greater than or equal to than the position $\pi^{-1}(i)$ of $i$ by the first
condition in Definition~\ref{d:vra}, which is equal to $m_i$ by
Definition~\ref{d:minvr}.

Thus, the stated logical equivalence holds for $r_1, r_2, \ldots, r_i$, so the
result follows by induction.
\end{proof}

We can visualize these definitions graphically, using {\bf Dyck paths} in the
Cartesian plane; by definition these are lattice paths running from $(0,0)$ to
$(N,N)$, consisting of $(0,1)$ steps (i.e.  north) and $(1,0)$ steps (i.e.
east), always remaining above the line $y = x$.  The {\bf northwest corners} in
a Dyck path consist of a north step immediately followed by an east step.  We
label each northwest corner by the row and column at the {\em end} of its
east step; this is the same as the natural labeling for the cell in the $N
\times N$ array bounded by the northwest corner (see illustrations below).

\begin{definition}\label{d:btm}
Let $\BTMP(\pi)$ be the lattice path from $(0,0)$ to $(N,N)$ whose height in
column $i$ is given by the $i$th entry of $\BTMW(\pi)$, with vertical steps as
necessary to create a connected path.
\end{definition}

It is straightforward to verify that $\BTMP(\pi)$ always lies above the line $y
= x$, so is a Dyck path.  Moreover, the bottom-to-top maxima of $\pi$
correspond precisely to the northwest corners of $\BTMP(\pi)$.  For example, we
have the following $\BTMP(\pi)$ and $\BTMW(\pi)$ corresponding to the possible
sets of bottom-to-top maxima that arise in $\S_3$ shown in
Figure~\ref{f:s3btm}.

\begin{figure}[h]
\[
\begin{array}{r@{\quad}l}
\pi = 
&
\hspace{0.7in}\dot1\dot2\dot3,\hspace{.6in}
\dot13\dot2,\hspace{.5in}
2\dot1\dot3,\hspace{.5in}
3\dot1\dot2,\hspace{.45in}
\underbrace{23\dot1,\ \ \ \ 32\dot1}
\\[1.0em]
\mathrm{BTM}_{\mathrm{PATH}}(\pi) =
&
\vcenter{\hbox{
\scalebox{0.4}{ 
\begin{tikzpicture}
\node at (-20,0) { \begin{tikzpicture}
        \foreach \i in {0,...,3} { \draw [dashed] (\i,\i) -- (\i,3); }
        \foreach \i in {0,...,3} { \draw [dashed] (0,\i) -- (\i,\i); }
        \draw [dashed] (0,0) -- (3,3);
        \draw [line width=2pt] (0,0) node [circle, fill=black, inner sep=0pt, minimum size=5pt, draw] {} 
        -- ++(0,1) node [circle, fill=black, inner sep=0pt, minimum size=5pt, draw] {}
        -- ++(1,0) node [circle, fill=black, inner sep=0pt, minimum size=5pt, draw] {}
        -- ++(0,1) node [circle, fill=black, inner sep=0pt, minimum size=5pt, draw] {}
        -- ++(1,0) node [circle, fill=black, inner sep=0pt, minimum size=5pt, draw] {}
        -- ++(0,1) node [circle, fill=black, inner sep=0pt, minimum size=5pt, draw] {}
        -- ++(1,0) node [circle, fill=black, inner sep=0pt, minimum size=5pt, draw] {};
    \draw (0.5,-0.5) node {\huge $1$};
    \draw (1.5,-0.5) node {\huge $2$};
    \draw (2.5,-0.5) node {\huge $3$};
    \draw (-0.5,0.5) node {\huge $1$};
    \draw (-0.5,1.5) node {\huge $2$};
    \draw (-0.5,2.5) node {\huge $3$};
    \draw (1.5,-1.2) node {\huge values};
    \draw (-2.5,1.5) node {\huge positions};
  \end{tikzpicture} };

\node at (-13,0) { \begin{tikzpicture}
        \foreach \i in {0,...,3} { \draw [dashed] (\i,\i) -- (\i,3); }
        \foreach \i in {0,...,3} { \draw [dashed] (0,\i) -- (\i,\i); }
        \draw [dashed] (0,0) -- (3,3);
        \draw [line width=2pt] (0,0) node [circle, fill=black, inner sep=0pt, minimum size=5pt, draw] {} 
        -- ++(0,1) node [circle, fill=black, inner sep=0pt, minimum size=5pt, draw] {}
        -- ++(1,0) node [circle, fill=black, inner sep=0pt, minimum size=5pt, draw] {}
        -- ++(0,1) node [circle, fill=black, inner sep=0pt, minimum size=5pt, draw] {}
        -- ++(0,1) node [circle, fill=black, inner sep=0pt, minimum size=5pt, draw] {}
        -- ++(1,0) node [circle, fill=black, inner sep=0pt, minimum size=5pt, draw] {}
        -- ++(1,0) node [circle, fill=black, inner sep=0pt, minimum size=5pt, draw] {};
    \draw (0.5,-0.5) node {\huge $1$};
    \draw (1.5,-0.5) node {\huge $2$};
    \draw (-0.5,0.5) node {\huge $1$};
    \draw (-0.5,2.5) node {\huge $3$};
    \draw (1.5,-1.2) node {\huge values};
  \end{tikzpicture} };

\node at (-8,0) { \begin{tikzpicture}
        \foreach \i in {0,...,3} { \draw [dashed] (\i,\i) -- (\i,3); }
        \foreach \i in {0,...,3} { \draw [dashed] (0,\i) -- (\i,\i); }
        \draw [dashed] (0,0) -- (3,3);
        \draw [line width=2pt] (0,0) node [circle, fill=black, inner sep=0pt, minimum size=5pt, draw] {} 
        -- ++(0,1) node [circle, fill=black, inner sep=0pt, minimum size=5pt, draw] {}
        -- ++(0,1) node [circle, fill=black, inner sep=0pt, minimum size=5pt, draw] {}
        -- ++(1,0) node [circle, fill=black, inner sep=0pt, minimum size=5pt, draw] {}
        -- ++(1,0) node [circle, fill=black, inner sep=0pt, minimum size=5pt, draw] {}
        -- ++(0,1) node [circle, fill=black, inner sep=0pt, minimum size=5pt, draw] {}
        -- ++(1,0) node [circle, fill=black, inner sep=0pt, minimum size=5pt, draw] {};
    \draw (0.5,-0.5) node {\huge $1$};
    \draw (2.5,-0.5) node {\huge $3$};
    \draw (-0.5,1.5) node {\huge $2$};
    \draw (-0.5,2.5) node {\huge $3$};
    \draw (1.5,-1.2) node {\huge values};
  \end{tikzpicture} };

\node at (-3,0) { \begin{tikzpicture}
        \foreach \i in {0,...,3} { \draw [dashed] (\i,\i) -- (\i,3); }
        \foreach \i in {0,...,3} { \draw [dashed] (0,\i) -- (\i,\i); }
        \draw [dashed] (0,0) -- (3,3);
        \draw [line width=2pt] (0,0) node [circle, fill=black, inner sep=0pt, minimum size=5pt, draw] {} 
        -- ++(0,1) node [circle, fill=black, inner sep=0pt, minimum size=5pt, draw] {}
        -- ++(0,1) node [circle, fill=black, inner sep=0pt, minimum size=5pt, draw] {}
        -- ++(1,0) node [circle, fill=black, inner sep=0pt, minimum size=5pt, draw] {}
        -- ++(0,1) node [circle, fill=black, inner sep=0pt, minimum size=5pt, draw] {}
        -- ++(1,0) node [circle, fill=black, inner sep=0pt, minimum size=5pt, draw] {}
        -- ++(1,0) node [circle, fill=black, inner sep=0pt, minimum size=5pt, draw] {};
    \draw (0.5,-0.5) node {\huge $1$};
    \draw (1.5,-0.5) node {\huge $2$};
    \draw (-0.5,1.5) node {\huge $2$};
    \draw (-0.5,2.5) node {\huge $3$};
    \draw (1.5,-1.2) node {\huge values};
  \end{tikzpicture} };

\node at (2,0) { \begin{tikzpicture}
        \foreach \i in {0,...,3} { \draw [dashed] (\i,\i) -- (\i,3); }
        \foreach \i in {0,...,3} { \draw [dashed] (0,\i) -- (\i,\i); }
        \draw [dashed] (0,0) -- (3,3);
        \draw [line width=2pt] (0,0) node [circle, fill=black, inner sep=0pt, minimum size=5pt, draw] {} 
        -- ++(0,1) node [circle, fill=black, inner sep=0pt, minimum size=5pt, draw] {}
        -- ++(0,1) node [circle, fill=black, inner sep=0pt, minimum size=5pt, draw] {}
        -- ++(0,1) node [circle, fill=black, inner sep=0pt, minimum size=5pt, draw] {}
        -- ++(1,0) node [circle, fill=black, inner sep=0pt, minimum size=5pt, draw] {}
        -- ++(1,0) node [circle, fill=black, inner sep=0pt, minimum size=5pt, draw] {}
        -- ++(1,0) node [circle, fill=black, inner sep=0pt, minimum size=5pt, draw] {};
    \draw (0.5,-0.5) node {\huge $1$};
    \draw (-0.5,2.5) node {\huge $3$};
    \draw (1.5,-1.2) node {\huge values};
  \end{tikzpicture} };
\end{tikzpicture}
}}}
\\[3.0em]
\mathrm{BTM}_{\mathrm{WORD}}(\pi) =
&
\hspace{0.7in}(1, 2, 3),\hspace{.25in}
(1, 3, 3),\hspace{.25in}
(2, 2, 3),\hspace{.25in}
(2, 3, 3),\hspace{.25in}
(3, 3, 3)
\end{array}
\]
\caption{For each $\pi \in \mathfrak{S}_3$, we illustrate $\mathrm{BTM}_{\mathrm{PATH}}(\pi)$ and $\mathrm{BTM}_{\mathrm{WORD}}(\pi)$.}
\label{f:s3btm}
\end{figure}

We are now in position to state our main results.

\begin{theorem}\label{t:rmain}
In the reselect distribution with parameters $x_1, x_2, \ldots, x_N$, the relative ordering $\pi \in \S_N$ occurs with probability
\[ R(\pi) = \frac{1}{\prod_{i=1}^N (x_i + x_{i+1} + \cdots + x_N)} \sum_{\substack{ \text{Dyck paths }\\ p \pgeq \mathrm{BTM_{PATH}}(\pi)}} \frac{ \prod_{i=1}^N x_{p_i} }{\prod_{j=1}^N p^{(j)}!} \]
\[ = \frac{1}{N! \ \prod_{i=1}^N (x_i + x_{i+1} + \cdots + x_N)} \sum_{\substack{ \text{integer sequences }\\ w \unrhd \mathrm{BTM_{WORD}}(\pi)}} \prod_{i=1}^N x_{w_i} \]
where
\begin{itemize}
\item for two Dyck paths $p$ and $q$, we say that $p \pgeq q$ if the shape of the Dyck path $p$ lies weakly above the shape of $q$,
\item for two integer sequences $w$ and $v$, we say that $w \unrhd v$ if the non-decreasing rearrangement of $w$, denoted $(w_{(1)}, \ldots, w_{(N)})$, satisfies $N \geq w_{(i)} \geq v_i$ for each $i$,
\item $p_i = $ height of the horizontal step in the $i$th column of the Dyck path $p$, and
\item $p^{(j)} = $ length of the horizontal segment in the $j$th row of the Dyck path $p$. % (so in particular, $\sum_{j=1}^N p^{(j)} = N$).
\end{itemize}
\end{theorem}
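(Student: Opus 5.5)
Let $S_i = x_i + \cdots + x_N$ denote the length of column $i$; the height in column $i$ has density $1/S_i$ there. The plan is to compute $R(\pi)$ by conditioning on the sequence of regions, and then use Lemma~\ref{l:vr} to identify which sequences contribute.

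First fix a weakly increasing sequence $(r_1,\ldots,r_N)$. For each $j$, let $k_j$ be the number of points falling in region $j$. We ask for the probability that the point configuration has this sequence of regions and relative order $\pi$. By the remark after Definition~\ref{d:vra}, this is zero unless the sequence is valid for $\pi$. By Lemma~\ref{l:vr}, validity means exactly that $r_i \ge m_i$ with $(m_i)=\BTMW(\pi)$.

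For a valid sequence, the values $i$ with $r_i=j$ must lie in region $j$ in their prescribed columns $\pi^{-1}(i)$. Each such column does contain region $j$, since $\pi^{-1}(i)\le m_i\le r_i$. The probability that all these points land in their assigned regions is $\prod_i x_{r_i}/S_{\pi^{-1}(i)}$. Conditional on this, the $k_j$ points inside region $j$ are i.i.d.\ uniform there, so they appear in the relative order demanded by $\pi$ with probability $1/k_j!$. Points in different regions are automatically correctly ordered, because regions are stacked vertically. Since $\prod_i S_{\pi^{-1}(i)}=\prod_i S_i$, we get
\[ R(\pi)=\frac{1}{\prod_{i}S_i}\sum_{\substack{r\ \text{weakly increasing}\\ m_i\le r_i\le N}}\ \frac{\prod_i x_{r_i}}{\prod_j k_j!}. \]

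Next we translate this to Dyck paths. A weakly increasing sequence with $r_i\ge m_i$ is exactly the column-height word of a lattice path lying weakly above $\BTMP(\pi)$. Such a path is automatically a Dyck path, because $\BTMP(\pi)$ is one, and this gives a bijection. Under it, $x_{r_i}=x_{p_i}$, and $k_j$ is the length $p^{(j)}$ of the horizontal segment at height $j$. This yields the first formula.

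For the second formula, multiply each summand by $N!$ and read $N!/\prod_j k_j!$ as the number of distinct rearrangements $w$ of the increasing word $r$. Every such $w$ has the same product $\prod x_{w_i}$. As $r$ ranges over increasing words with $m_i\le r_i\le N$, the words $w$ range exactly over the integer sequences with $w\unrhd\BTMW(\pi)$, each counted once via its sorted rearrangement. Summing gives the second expression.

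The main obstacle is the careful justification of the first step: that conditioning on the region sequence factorizes exactly as claimed. This requires two facts. First, the events for distinct region sequences are disjoint up to measure zero. Second, given the regions, the within-region order is uniform and independent across regions. I would make this rigorous by writing $R(\pi)$ as an integral over the box $\prod_i (1-S_i,1)$ and partitioning it by which region each point falls in.
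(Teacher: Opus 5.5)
Your proposal is correct and follows the paper's proof essentially step for step. You condition on the region sequence, restrict to the valid ones via Lemma~\ref{l:vr}, and identify them with Dyck paths weakly above $\BTMP(\pi)$. The product $\prod_i x_{p_i}/S_{\pi^{-1}(i)}$ with the $1/\prod_j p^{(j)}!$ tie correction gives the first formula, and the multinomial count of rearrangements gives the second. Your extra checks, that each assigned column contains its region because $\pi^{-1}(i)\le m_i$ and that points in distinct regions are automatically in the right order, make explicit details the paper leaves implicit.
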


\begin{proof}
We can compute $R(\pi)$, the probability of obtaining $\pi$ as the
relative ordering in the reselect experiment, by summing the probabilities of
obtaining each valid sequence of regions and the correct relative ordering for
points from regions that are repeated.

The valid sequences of regions are given by Dyck paths $p \pgeq \BTMP(\pi)$
according to Lemma~\ref{l:vr} and Definition~\ref{d:btm}.  These are disjoint
events, so we may sum their probabilities.

In the bijection between Dyck paths and region assignments, the height $p_i$ of
the horizontal step in the $i$th column of $p$ represents the region of the
$i$th value (sorted from smallest to largest) which should then occur in some
column $j = \pi^{-1}(i)$ of the point configuration.  Since each column has a
uniform distribution, the probability of this event in the reselect experiment
is therefore $\frac{x_{p_i}}{x_j + x_{j+1} + \cdots + x_N}$.  As the point in
each column of the configuration is selected independently, the probability
that all of them are selected as assigned by $p$ is the product of these
formulas.  The denominators do not depend on $p$ so we collect them together
and pull them out of the sum.

Finally, if a set of $k$ distinct values from $\pi$ are all assigned to the
same region, any of the $k!$ relative orderings will be equally likely in the
reselect experiment.  Thus, we further divide by $\prod_{j=1}^N p^{(j)}!$ to
find the probability that the point configuration has the same relative order
prescribed by $\pi$ for these points assigned to repeated regions.

This proves the first equality.  Each Dyck path $p$ in the first sum
corresponds to a non-decreasing word containing $p^{(j)}$ copies of $j$.  Thus,
each term from the first sum corresponds to $\frac{N!}{\prod_{j=1}^N p^{(j)}!}$
permuted integer sequences (each having the same content) in the second sum by
the Multinomial Theorem.  This yields the second equality.
\end{proof}

\begin{remark}
The integer sequences appearing in the second formula for $R(\pi)$ are
essentially {\bf vector parking functions}, defined with non-standard
conventions to match the parameters of our reselect experiment.  See \cite{yan,
pitman--stanley} for more about these objects.
\end{remark}

\begin{remark}
In the reselect distribution, permutations having the same set of (values and
positions of) bottom-to-top maxima have the same probability of occurrence.
In particular, the permutations whose last entry is $1$ all have the same
probability of occurrence.  This is due to the fact that all of these
permutations must have their entries selected from region $N$ in every column.
\end{remark}

It turns out that the probabilities $R(\pi)$ can also be obtained from a
determinantal formula of Steck (see \cite{steck,pitman}), which explains our
alternative nomenclature for this distribution.  Consider the
following specialization of \cite[Theorem 1]{pitman} where all the upper limits
are $1$ and we transpose the matrix indices.

\begin{theorem}{\bf (Steck)}
Let $0 \leq \ell_1 \leq \ell_2 \leq \cdots \leq \ell_N \leq 1$ be given and suppose that $q_1, q_2, \ldots, q_N$ are the ``order statistics'' (in ascending order) from a sample of $N$ independent uniform random variables with a range from $0$ to $1$.  Then, the probability that $\ell_i \leq q_i$ is 
\[ N! \ \det\left[  (1-\ell_i)^{i-j+1} / (i-j+1)!  \right]_{1 \leq i,j \leq N} \]
where we interpret any matrix entry with $i-j+1<0$ as zero (where the factorial is undefined).
\end{theorem}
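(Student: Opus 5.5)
We prove Steck's formula by a recursion on $N$ that the determinant also satisfies. Write $P_N(\ell_1,\ldots,\ell_N)$ for the probability that $\ell_i \le q_i$ for all $i$. The ordered sample $(q_1,\ldots,q_N)$ has density $N!$ on the simplex $0<q_1<\cdots<q_N<1$, so
\[ P_N = N! \int_{\ell_1\le q_1,\ \ldots,\ \ell_N \le q_N,\ q_1<\cdots<q_N<1} dq_1\cdots dq_N . \]
Setting $u_i = 1-\ell_i$, this is $N!$ times a polytope volume $V_N(u_1,\ldots,u_N)$, with $u_1\ge u_2\ge\cdots\ge u_N\ge 0$. The claim is then $V_N = \det[u_i^{i-j+1}/(i-j+1)!]$. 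This is a lower Hessenberg matrix: the entries vanish for $j>i+1$, the superdiagonal entries are $u_i^0/0! = 1$, and the diagonal entries are $u_i$.

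The first step is a recursion for $V_N$. Conditioning on the index $k$ at which the constraint structure splits is awkward, so we use the standard inclusion--exclusion (Steck/Noe) identity instead. Dropping the ordering $q_{N}$ versus the others, and integrating out the largest variable, we get
\[ V_N = \sum_{k=0}^{N-1} (-1)^{N-1-k}\, V_k(u_1,\ldots,u_k)\, \frac{u_{k+1}^{N-k}}{(N-k)!}, \qquad V_0 = 1. \]
To justify it, write the region as the set where each $q_i \in [\ell_i,1]$ and the $q_i$ are increasing. Pick the smallest $k$ such that $q_{k+1},\ldots,q_N$ all lie in $[\ell_{k+1},1]$ with no further constraint. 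This telescopes to the alternating sum. Alternatively, verify the formula by differentiating in $u_N$: $\partial V_N/\partial u_N = V_{N-1}(u_1,\ldots,u_{N-1})$ evaluated with $q_N=\ell_N$, combined with the boundary value at $u_N=0$.

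The second step is to check that the Hessenberg determinant $D_N$ obeys the same recursion. Expanding a lower Hessenberg determinant along its last row gives exactly
\[ D_N=\sum_{k}(-1)^{N-1-k} D_k\, a_{N,k+1}\prod(\text{superdiagonal }1\text{'s}), \]
with $a_{N,k+1}=u_N^{N-k}/(N-k)!$. This indexing has $u_N$ rather than $u_{k+1}$, so the correct recursion for $V_N$ is the one obtained by integrating out $q_N$. That is, we integrate the top variable over $[\ell_N, 1]$ and use an inclusion--exclusion in which the last block $q_{k+1},\ldots,q_N$ is required to lie in $[\ell_N,1]$, with volume $u_N^{N-k}/(N-k)!$.

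So the key step, and the main obstacle, is to establish the recursion
\[ V_N=\sum_{k=0}^{N-1}(-1)^{N-1-k}V_k(u_1,\ldots,u_k)\,\frac{u_N^{N-k}}{(N-k)!}. \]
For this we relax the constraint on $q_N$ stepwise. Let $A_k$ be the event that $q_{k+1},\ldots,q_N$ all lie in $[\ell_N,1]$ while $q_1,\ldots,q_k$ satisfy their own constraints. Then $\{\text{all constraints}\}$ is obtained from the $A_k$ by inclusion--exclusion. This works because the constraints $q_i\ge \ell_i$ for $i>k$ are implied once $q_{k+1}\ge\ell_N$ ($\ell$ is increasing), and the sets telescope. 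Finally, we confirm the base cases $N=1,2$ directly, and induction on $N$ completes the proof.
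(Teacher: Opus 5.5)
The paper gives no proof of this statement; it quotes it from Steck via Pitman's Theorem~1. So your argument has to stand on its own. Its architecture is sound. The sorted sample has density $N!$ on the simplex, and the last-row expansion of the lower Hessenberg determinant gives $D_N=\sum_{k=0}^{N-1}(-1)^{N-1-k}D_k\,u_N^{N-k}/(N-k)!$. The polytope volume $V_N$ does satisfy the same recursion.

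The gap is that you never prove this recursion for $V_N$, which you rightly flag as the crux. The phrase ``inclusion--exclusion \dots\ the sets telescope'' is exactly where the argument should be, and as you set it up it fails under either reading of $A_k$:

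\emph{Order-statistics reading.} If $A_k$ is an event for the order statistics, so $q_k<q_{k+1}$ is imposed, then the $A_k$ are nested. Their volumes are not $V_k\,u_N^{N-k}/(N-k)!$; already for $N=2$, $k=1$ one gets $V_2\neq u_1u_2$. The honest telescoping splits the event by the number $k$ of order statistics below $\ell_N$, which is where your observation that $q_i\ge\ell_i$ becomes automatic is really used. It yields
\[ V_N=\sum_{k=0}^{N-1}V_k(u_1-u_N,\ldots,u_k-u_N)\,\frac{u_N^{N-k}}{(N-k)!}, \]
with shifted arguments and all signs positive. That is not the identity you need.

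\emph{Product-set reading.} If $A_k$ is the product set $\{q_1<\cdots<q_k,\ q_i\ge\ell_i\}\times\{\ell_N\le q_{k+1}<\cdots<q_N<1\}$, which the volume $V_k u_N^{N-k}/(N-k)!$ requires, then the $A_k$ are not nested and nothing telescopes by itself. For example, for $N\ge 3$ the natural pairing of $A_1\cap\{q_1>q_2\}$ with $A_0$ fails: their volumes are $(N-1)u_N^N/N!$ and $u_N^N/N!$.

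You first offered the same loose reasoning for the $u_{k+1}$-recursion, which is false at $N=2$ (it gives $u_1u_2-u_1^2/2$). That shows the reasoning is not doing the work. The differentiation route has the same hole: $\partial V_N/\partial u_N=V_{N-1}(u_1-u_N,\ldots,u_{N-1}-u_N)$, so you would also need a matching shift identity for the determinant.

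The missing idea is a genuine cancellation argument. Take the product sets $A_k$ for $0\le k\le N$, with $A_N$ the target polytope of volume $V_N$, weighted by $(-1)^{N-k}$. On $A_k$, take the smallest of all $N$ coordinates lying in $[\ell_N,1)$. This set is nonempty: it contains $q_{k+1}$ if $k<N$, and $q_N$ if $k=N$. Toggle that coordinate between the constrained part and the free block. Moving it into the constrained part, insert it right after the constrained coordinates below $\ell_N$; moving it out, make it the new bottom of the block.

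Since $\ell_N\ge\ell_i$ for every $i$, all constraints survive. The map is an involution, changes $k$ by $\pm1$, and is piecewise a coordinate permutation, hence volume-preserving. Therefore $\sum_{k=0}^N(-1)^{N-k}|A_k|=0$, which is exactly your recursion, and your induction then closes. Equivalently, split $A_k$ by the number $m$ of constrained coordinates in $[\ell_N,1)$ and use $\sum_{m+r=n}x^m(-x)^r/(m!\,r!)=0$ for $n\ge1$.
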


\begin{corollary}\label{c:rmain}
Given $\pi \in \S_N$, let $\BTMW(\pi) = (m_1, m_2, \ldots, m_N)$.
Then, 
\[ R(\pi) =  \frac{1}{\prod_{i=1}^N (x_i + x_{i+1} + \cdots + x_N)} \ \det\left[  (x_{m_i} + x_{m_i+1} + \cdots + x_N)^{i-j+1} / (i-j+1)!  \right]_{1 \leq i,j \leq N}. \]
\end{corollary}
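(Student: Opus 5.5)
We plan to derive the corollary from Theorem~\ref{t:rmain} together with Steck's theorem, by recognizing the sum over valid region sequences as an order-statistic probability for $N$ i.i.d.\ uniform variables on $(0,1)$.

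The first step is a probabilistic reinterpretation of the second formula in Theorem~\ref{t:rmain}. Consider $N$ independent points $U_1,\dots,U_N$, each uniform on $(0,1)$. Partition $(0,1)$ into consecutive intervals of lengths $x_1, x_2, \ldots, x_N$ from bottom to top, and call the $k$th interval region $k$, so region $k$ is $[x_1+\cdots+x_{k-1},\, x_1+\cdots+x_k)$. Then $\prod_{i=1}^N x_{w_i}$ is the probability that $U_i$ lies in region $w_i$ for each $i$. Summing over all integer sequences $w \unrhd \BTMW(\pi)$ therefore gives the probability of the following event: when the $U_i$ are sorted, the $i$th smallest lies in a region of index at least $m_i$. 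Up to boundary sets of measure zero, this says $q_i \geq \ell_i$, where $q_1 \le \cdots \le q_N$ are the order statistics and $\ell_i := x_1+\cdots+x_{m_i-1}$. Since $m_i$ is weakly increasing, so are the $\ell_i$, and $0 \leq \ell_i \leq 1$. Hence
\[ \sum_{w \unrhd \BTMW(\pi)} \prod_{i=1}^N x_{w_i} = P(\ell_i \le q_i \text{ for all } i). \]

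The second step is to apply Steck's theorem to this probability. It gives
\[ N!\,\det\left[(1-\ell_i)^{i-j+1}/(i-j+1)!\right], \]
and $1-\ell_i = x_{m_i}+x_{m_i+1}+\cdots+x_N$ because $\sum_k x_k = 1$. Substituting into the second formula of Theorem~\ref{t:rmain}, the $N!$ from Steck cancels the $1/N!$ there, which yields exactly the stated determinant formula.

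The main obstacle is making the identification in the first step airtight. I would check that the condition ``the nondecreasing rearrangement $w_{(i)}$ satisfies $w_{(i)} \geq m_i$'' matches ``the $i$th order statistic lies at or above the bottom of region $m_i$'' under the stated convention for $\unrhd$. I would also confirm that ties between points and region boundaries have probability zero. Finally, I would verify that the definition of $\unrhd$ imposes only the lower bounds $w_{(i)} \geq m_i$ (the bound $w_{(i)} \le N$ is automatic), so that the sum is over all sequences with entries in $\{1,\dots,N\}$ meeting those lower bounds. Everything else is direct substitution.
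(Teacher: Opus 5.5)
Your argument is correct, but it takes a different route from the paper's. The paper's proof does not use Theorem~\ref{t:rmain} at all. It writes $R(\pi)$ as the conditional probability that $N$ i.i.d.\ uniforms $y_1,\dots,y_N$ have relative order $\pi$, given that each $y_c \geq x_1+\cdots+x_{c-1}$. It then uses Lemma~\ref{l:vr} to show that, on the event of relative order $\pi$, this constraint is equivalent to $q_i \geq \ell_i$ for all $i$. Finally, it gets the factor $1/N!$ because the Steck event depends only on the order statistics, which are independent of the relative order.

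You instead start from the second (vector parking function) formula of Theorem~\ref{t:rmain}. You identify the unnormalized sum $\sum_{w \unrhd \BTMW(\pi)} \prod_i x_{w_i}$ directly with $P(q_i \geq \ell_i \text{ for all } i)$, using the fact that sorting the points sorts their region labels. The $1/N!$ is then already supplied by the multinomial count in Theorem~\ref{t:rmain}, so you need no exchangeability step. Your equivalence $\mathrm{region}(q_i)\geq m_i \iff q_i \geq \ell_i$ holds exactly, even when some regions have length zero.

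What your route buys is a direct probabilistic proof of the identity $\sum_{w \unrhd m}\prod_i x_{w_i} = N!\det[\cdots]$, which is the content of the generalized Pitman--Stanley corollary that follows. The paper's route instead gives a computation of $R(\pi)$ that is logically independent of the Dyck path enumeration, and that is how the paper frames that corollary: as a comparison of two separate computations of $R(\pi)$. Both proofs finish with the same appeal to Steck's theorem, using $1-\ell_i = x_{m_i}+\cdots+x_N$.
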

\begin{proof}
    Let $y = (y_1, y_2, \ldots, y_N)$ be a sample of $N$ independent uniform random variables with a range from $0$ to $1$.  Say that $y$ is {\bf in the reselect region} if each $y_i \geq 1 - (x_i + x_{i+1} + \cdots + x_N)$.

Then, $R(\pi)$ is the conditional probability 
\[ \P(y \text{ has relative order $\pi$} \ |\  y \text{ is in the reselect region}). \]
This is
\[ \frac{\P(y \text{ has relative order $\pi$} \land y \text{ is in the reselect region})}{\prod_{i=1}^N (x_i + x_{i+1} + \cdots + x_N)} \]
since the product in the denominator is the probability that $y$ is in the reselect region (the point in each column being drawn independently).

By Lemma~\ref{l:vr} from our development, we have that $y$ will have relative order $\pi$ if and only if the region of the $i$th largest point of $y$ (reading increasingly) is at least $m_i$.  But this is precisely the same as requiring the ``order statistic'' $q_i$ to be at least $x_1+x_2+\cdots+x_{m_{i}-1}$ (see Figure~\ref{f:pex}).  Thus, we may apply Steck's Theorem with $1 - \ell_i = x_{m_i} + x_{m_i + 1} + \cdots + x_N$.

Since $y$ satisfying Steck's event is independent of the particular relative ordering $\pi$ we assumed for $y$, we multiply Steck's formula by $1/N!$, obtaining our stated result.
\end{proof}

Although this determinantal formula is not as amenable to combinatorial proof as the other formulas we found, it is fast, elegant, and has the advantage of clearly expressing the inequalities that define the event whose probability is being computed.  Putting our results together, we can generalize a result from Pitman--Stanley~\cite{pitman--stanley}.  Equations (2) and (5) from their Theorem 1 correspond to the single choice $m = (1, 2, \ldots, N)$ which is $\BTMW(12\cdots N)$ in our distribution on the symmetric group, but as our Theorem~\ref{t:rmain} and Corollary~\ref{c:rmain} each compute $R(\pi)$, we find that equality holds for arbitrary $m$.

\begin{corollary}
Fix a non-decreasing integer sequence $m = (m_1, m_2, \ldots, m_N)$ such that $i \leq m_i \leq N$ for each $i$.
Then, 
\[ \det\left[  (x_{m_i} + x_{m_i+1} + \cdots + x_N)^{i-j+1} / (i-j+1)!  \right]_{1 \leq i,j \leq N} = \sum_{w \in K_N(m)} \frac{x_1^{w_1} x_2^{w_2} \cdots x_N^{w_N} }{w_1! w_2! \cdots w_N!} \]
where we interpret any matrix entry with $i-j+1<0$ as zero and
\[ K_N(m) = \{w \in \mathbb{N}^N : \sum_{i=0}^{j-1} w_{N-i} \geq \#\{i:m_i\geq N-j+1\} \text{ for all $1 \leq j \leq N-1$ and } \sum_{i=1}^N w_i = N \}. \]
\end{corollary}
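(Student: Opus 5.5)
Our plan is to equate the two expressions for $R(\pi)$ from Theorem~\ref{t:rmain} and Corollary~\ref{c:rmain}, after first realizing every admissible $m$ as $\BTMW(\pi)$ for some $\pi$. Given a non-decreasing $m$ with $i \leq m_i \leq N$, we build $\pi \in \S_N$ as follows. For each $i$ with $m_i > m_{i-1}$ (taking $m_0 = 0$), place value $i$ at position $m_i$. Then fill the remaining values, in increasing order, into the remaining positions, always using the leftmost free position. The condition $m_i \geq i$ ensures that at each stage enough free positions lie to the left of the current maximum $m_{i}$. Hence every non-record value lands to the left of the last record and is not a bottom-to-top maximum, while the record values are. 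A short induction along Definition~\ref{d:minvr} then gives $\BTMW(\pi)=m$. This realizability step is the main thing to check; if it fails, we instead rerun the proofs directly with the region constraints $r_i \geq m_i$, since Lemma~\ref{l:vr} was only used through $m$.

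With $\pi$ fixed, multiply both expressions for $R(\pi)$ by $\prod_i (x_i+\cdots+x_N)$. The left side becomes the determinant. The right side becomes the first form in Theorem~\ref{t:rmain}, namely $\sum_{p \pgeq \BTMP(\pi)} \prod_i x_{p_i}/\prod_j p^{(j)}!$.

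Next, reindex by multiplicities. A Dyck path $p$ corresponds to its non-decreasing word, which contains $w_j := p^{(j)}$ copies of $j$. Then $\prod_i x_{p_i} = \prod_j x_j^{w_j}$ and $\prod_j p^{(j)}! = \prod_j w_j!$, so each term is exactly $x^w/w!$. It remains to show that the condition $p \pgeq \BTMP(\pi)$ is equivalent to $w \in K_N(m)$.

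The condition $p \pgeq \BTMP(\pi)$ says the sorted word $(p_1,\dots,p_N)$ satisfies $p_i \geq m_i$ for all $i$, with $\sum_j w_j = N$. For non-decreasing sequences, this entrywise dominance is equivalent to a family of tail-count inequalities. For each threshold $t = N-j+1$, the number of entries $p_i \geq t$ must be at least the number of $i$ with $m_i \geq t$. Here $\#\{i: p_i\geq N-j+1\} = \sum_{i=0}^{j-1} w_{N-i}$, which is exactly the inequality in $K_N(m)$.

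The equivalence itself is standard. In one direction, entrywise dominance gives the count inequalities immediately. In the other, if $p_i < m_i = t$ for some $i$, then all indices $k \geq i$ have $m_k \geq t$, while fewer than $N-i+1$ of the $p$'s are $\geq t$, which violates the inequality at threshold $t$.

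The case $j = N$ (threshold $1$) is automatic, so it is correctly omitted from $K_N(m)$. Also, $p_N \leq N$ holds trivially. Hence the two index sets match, and the identity follows.
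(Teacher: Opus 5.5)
Your proposal is correct and is essentially the paper's argument: the paper simply equates the two expressions for $R(\pi)$ from Theorem~\ref{t:rmain} and Corollary~\ref{c:rmain}. You additionally check the two steps the paper leaves implicit, both correctly: that every admissible $m$ equals $\BTMW(\pi)$ for some $\pi$ (your record-then-leftmost-filling construction works because $m_i \ge i$), and that $p \pgeq \BTMP(\pi)$ is equivalent to the tail-count inequalities defining $K_N(m)$. If you want the identity as a polynomial identity in unconstrained $x_i$, rather than only for the reselect parameters, add that both sides are homogeneous of degree $N$, so agreement on the open simplex extends by scaling to the open positive orthant and hence holds identically.
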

Presumably, other results from the Pitman--Stanley paper also generalize for arbitrary choices of $m = \BTMW(\pi)$ but we do not pursue this here.

\begin{example}
\begin{figure}[ht]
\scalebox{0.4}{ \begin{tikzpicture}
\node at (0,0) (A) { \begin{tikzpicture}
        \foreach \i in {0,...,3} { \draw [dashed] (\i,\i) -- (\i,3); }
        \foreach \i in {0,...,3} { \draw [dashed] (0,\i) -- (\i,\i); }
        \draw [dashed] (0,0) -- (3,3);
		\draw [line width=2pt] (0,0) node [circle, fill=black, inner sep=0pt, minimum size=5pt, draw] {} 
		-- ++(0,1) node [circle, fill=black, inner sep=0pt, minimum size=5pt, draw] {}  % UP
		-- ++(0,1) node [circle, fill=black, inner sep=0pt, minimum size=5pt, draw] {}  % UP
		-- ++(0,1) node [circle, fill=black, inner sep=0pt, minimum size=5pt, draw] {}  % UP
		-- ++(1,0) node [circle, fill=black, inner sep=0pt, minimum size=5pt, draw] {}  % RIGHT
		-- ++(1,0) node [circle, fill=black, inner sep=0pt, minimum size=5pt, draw] {}  % RIGHT
		-- ++(1,0) node [circle, fill=black, inner sep=0pt, minimum size=5pt, draw] {}  % RIGHT
		;
    \draw (1.5,4.2) node {\huge $23\dot1$, $32\dot1$};    
	\draw (1,-1) node {\huge $x_3^3 / 3!$};
  \end{tikzpicture} };
\node at (8,0) (B) { \begin{tikzpicture}
        \foreach \i in {0,...,3} { \draw [dashed] (\i,\i) -- (\i,3); }
        \foreach \i in {0,...,3} { \draw [dashed] (0,\i) -- (\i,\i); }
        \draw [dashed] (0,0) -- (3,3);
		\draw [line width=2pt] (0,0) node [circle, fill=black, inner sep=0pt, minimum size=5pt, draw] {} 
		-- ++(0,1) node [circle, fill=black, inner sep=0pt, minimum size=5pt, draw] {}  % UP
		-- ++(0,1) node [circle, fill=black, inner sep=0pt, minimum size=5pt, draw] {}  % UP
		-- ++(1,0) node [circle, fill=black, inner sep=0pt, minimum size=5pt, draw] {}  % RIGHT
		-- ++(0,1) node [circle, fill=black, inner sep=0pt, minimum size=5pt, draw] {}  % UP
		-- ++(1,0) node [circle, fill=black, inner sep=0pt, minimum size=5pt, draw] {}  % RIGHT
		-- ++(1,0) node [circle, fill=black, inner sep=0pt, minimum size=5pt, draw] {}  % RIGHT
		;
    \draw (1.5,4.2) node {\huge $3\dot1\dot2$};    
	\draw (1,-1) node {\huge $x_2 x_3^2 / 2!$};
  \end{tikzpicture} };
\node at (16,-4) (C) { \begin{tikzpicture}
        \foreach \i in {0,...,3} { \draw [dashed] (\i,\i) -- (\i,3); }
        \foreach \i in {0,...,3} { \draw [dashed] (0,\i) -- (\i,\i); }
        \draw [dashed] (0,0) -- (3,3);
		\draw [line width=2pt] (0,0) node [circle, fill=black, inner sep=0pt, minimum size=5pt, draw] {} 
		-- ++(0,1) node [circle, fill=black, inner sep=0pt, minimum size=5pt, draw] {}  % UP
		-- ++(0,1) node [circle, fill=black, inner sep=0pt, minimum size=5pt, draw] {}  % UP
		-- ++(1,0) node [circle, fill=black, inner sep=0pt, minimum size=5pt, draw] {}  % RIGHT
		-- ++(1,0) node [circle, fill=black, inner sep=0pt, minimum size=5pt, draw] {}  % RIGHT
		-- ++(0,1) node [circle, fill=black, inner sep=0pt, minimum size=5pt, draw] {}  % UP
		-- ++(1,0) node [circle, fill=black, inner sep=0pt, minimum size=5pt, draw] {}  % RIGHT
		;
    \draw (1.5,4.2) node {\huge $\dot1 3\dot2$};    
	\draw (1,-1) node {\huge $x_2^2 x_3 / 2!$};
  \end{tikzpicture} };
\node at (16,4) (D) { \begin{tikzpicture}
        \foreach \i in {0,...,3} { \draw [dashed] (\i,\i) -- (\i,3); }
        \foreach \i in {0,...,3} { \draw [dashed] (0,\i) -- (\i,\i); }
        \draw [dashed] (0,0) -- (3,3);
		\draw [line width=2pt] (0,0) node [circle, fill=black, inner sep=0pt, minimum size=5pt, draw] {} 
		-- ++(0,1) node [circle, fill=black, inner sep=0pt, minimum size=5pt, draw] {}  % UP
		-- ++(1,0) node [circle, fill=black, inner sep=0pt, minimum size=5pt, draw] {}  % RIGHT
		-- ++(0,1) node [circle, fill=black, inner sep=0pt, minimum size=5pt, draw] {}  % UP
		-- ++(0,1) node [circle, fill=black, inner sep=0pt, minimum size=5pt, draw] {}  % UP
		-- ++(1,0) node [circle, fill=black, inner sep=0pt, minimum size=5pt, draw] {}  % RIGHT
		-- ++(1,0) node [circle, fill=black, inner sep=0pt, minimum size=5pt, draw] {}  % RIGHT
		;
    \draw (1.5,4.2) node {\huge $2\dot1\dot3$};    
	\draw (1,-1) node {\huge $x_1 x_3^2 / 2!$};
  \end{tikzpicture} };
\node at (24,0) (E) { \begin{tikzpicture}
        \foreach \i in {0,...,3} { \draw [dashed] (\i,\i) -- (\i,3); }
        \foreach \i in {0,...,3} { \draw [dashed] (0,\i) -- (\i,\i); }
        \draw [dashed] (0,0) -- (3,3);
		\draw [line width=2pt] (0,0) node [circle, fill=black, inner sep=0pt, minimum size=5pt, draw] {} 
		-- ++(0,1) node [circle, fill=black, inner sep=0pt, minimum size=5pt, draw] {}  % UP
		-- ++(1,0) node [circle, fill=black, inner sep=0pt, minimum size=5pt, draw] {}  % RIGHT
		-- ++(0,1) node [circle, fill=black, inner sep=0pt, minimum size=5pt, draw] {}  % UP
		-- ++(1,0) node [circle, fill=black, inner sep=0pt, minimum size=5pt, draw] {}  % RIGHT
		-- ++(0,1) node [circle, fill=black, inner sep=0pt, minimum size=5pt, draw] {}  % UP
		-- ++(1,0) node [circle, fill=black, inner sep=0pt, minimum size=5pt, draw] {}  % RIGHT
		;
    \draw (1.5,4.2) node {\huge $\dot1\dot2\dot3$};    
	\draw (1,-1) node {\huge $x_1 x_2 x_3 / 1!$};
  \end{tikzpicture} };
\draw (A) -- (B);
\draw (B) -- (C);
\draw (B) -- (D);
\draw (C) -- (E);
\draw (D) -- (E);
    \end{tikzpicture}}
\caption{Contributions of $\mathrm{BTM}_{\mathrm{PATH}}(\pi)$ to the combinatorial $R(\pi)$ formula}
\label{f:s3}
\end{figure}

For $N = 3$ there are five Dyck paths.  Their $\pgeq$ relationship and contributions to the sum are shown in Figure~\ref{f:s3}.
Thus, we have the following distribution (omitting the normalizing constant):
\[ R( 123 ) = \frac{x_3^3}{3!} + \frac{x_2 x_3^2}{2!} + \frac{x_2^2 x_3}{2!} + \frac{x_1 x_3^2}{2!} + \frac{x_1 x_2 x_3}{1!},
\hspace{.7in} R( 1\a2 ) = \frac{x_3^3}{3!} + \frac{x_2 x_3^2}{2!}  + \frac{x_1 x_3^2}{2!},  \]
\[ R( \a13 ) = \frac{x_3^3}{3!} + \frac{x_2 x_3^2}{2!} + \frac{x_2^2 x_3}{2!},   
\hspace{.4in} R( \a12 ) = \frac{x_3^3}{3!} + \frac{x_2 x_3^2}{2!},   
\hspace{.4in} R( \a\a1 ) = \frac{x_3^3}{3!}.   \]
Remarkably, the sum of these $R(123) + R(1\a2) + R(\a13) + R(\a12) + 2 R(\a\a1)$ (over the full symmetric group) equals the normalizing constant
\[ (x_1+x_2+x_3)(x_2+x_3)x_3 = x_1 x_2 x_3 + x_1 x_3^2 + x_2^2 x_3 + 2 x_2 x_3^2 + x_3^3, \]
a natural refinement of the Mahonian generating function, in order to have the probabilities sum to 1.
\end{example}

%%%%%%%%%%%%%%%%%%%%%%%%%%%%%%%%%%%%%%%%%%%%%%%%%%%%%%%%%%%%%%%%%%%%%
%  Section
%%%%%%%%%%%%%%%%%%%%%%%%%%%%%%%%%%%%%%%%%%%%%%%%%%%%%%%%%%%%%%%%%%%%%
\bigskip
\section{The one-step reselect model}\label{s:3}

%%%%%%%%%%%%%%%%%%%%%%%%%%%%%%%%%%%%%%%%%%%%%%%%%%%%%%%%%%%%%%%%%%%%%
%  Subsection
%%%%%%%%%%%%%%%%%%%%%%%%%%%%%%%%%%%%%%%%%%%%%%%%%%%%%%%%%%%%%%%%%%%%%
\bigskip
\subsection{The reselect distribution}

Here, we specialize to models where there is only a single reselection step, occurring at position $I$.  That is, for fixed $N$, $1 \leq I < N$, and $x \in [0, 1)$, we define the parameters for the reselect distribution to be
\begin{equation*}
x_i = \begin{cases}
x, & \text{if } i=I \\
1-x, & \text{if } i=N\\
0, & \text{ otherwise}.
\end{cases}
\end{equation*}
Just as $R(\pi)$ is constant on permutations having the same positions and values of bottom-to-top maxima by Theorem~\ref{t:rmain}, the specialization of $R(\pi)$ in the one-step reselect models will turn out to be constant on permutations having the same ``$J$-statistic'' that we introduce below.

\begin{definition}
For $\pi \in \S_{N}$ and $I < N$, the \textit{J-statistic} of $\pi$, denoted $J(\pi)$, is the value of the next bottom-to-top maximum that appears after position $I$ in $\pi$.
\end{definition}

Although we suppress it from the notation for simplicity, observe that the $J$-statistic depends on $I$ as well as $\pi$.  Notice that the $J$-statistic can be at most $I+1$, and this value is attained precisely when $\{\pi_1,\dots, \pi_{I}\} = \{1,\dots, I\}$.

\begin{example}
Fix $\pi=\dot1\dot2\dot356\dot4\dot7\in \S_7$. When $I=3$, $J(\pi)=4$, whereas, when $I=6$, $J(\pi)=7$.
\end{example}

\begin{theorem}\label{t:1main}
Suppose $N$ and $0 \leq I < N$ are fixed.  For $J = 1, 2, \ldots, I+1$, define 
\[ R_J(x) := \frac{1}{N!\ (1-x)^{N-I}} \sum_{h=0}^{J-1} {N \choose h} x^h(1-x)^{N-h}. \]
Then $R(\pi) = R_J(x)$ for all $\pi \in \S_N$ with $J(\pi)=J$.
\end{theorem}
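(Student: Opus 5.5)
We would specialize the second formula of Theorem~\ref{t:rmain} to the one-step parameters. The normalizing product is $\prod_{i=1}^N (x_i+\cdots+x_N)$. For $i\le I$ each tail sum equals $x+(1-x)=1$, and for $I<i\le N$ it equals $1-x$. So the prefactor becomes $\frac{1}{N!\,(1-x)^{N-I}}$, which matches the statement. It then remains to show that
\[ \sum_{w \unrhd \BTMW(\pi)} \prod_{i=1}^N x_{w_i} = \sum_{h=0}^{J-1} {N \choose h} x^h (1-x)^{N-h}. \]
Since $x_i=0$ unless $i\in\{I,N\}$, only words with every letter in $\{I,N\}$ contribute. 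A word with $h$ letters equal to $I$ and $N-h$ letters equal to $N$ contributes $x^h(1-x)^{N-h}$. There are ${N\choose h}$ such words for each $h$, so the whole task reduces to deciding which $h$ are admissible.

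Such a word has non-decreasing rearrangement $(I,\ldots,I,N,\ldots,N)$ with $h$ copies of $I$. The condition $w\unrhd\BTMW(\pi)$ only constrains the first $h$ positions, and it says exactly that $m_i\le I$ for all $i\le h$; positions with $N$ are automatically fine. Since $m$ is non-decreasing, the admissible $h$ are precisely $0\le h\le h^*$, where $h^*=\#\{i: m_i\le I\}$. So the key step is the combinatorial identity $h^* = J(\pi)-1$.

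To prove that identity, recall from Definition~\ref{d:minvr} that $m_i$ is the position of the largest bottom-to-top maximum with value $\le i$. Let $J=J(\pi)$ be the first bottom-to-top maximum value whose position exceeds $I$. Bottom-to-top maxima have increasing positions as their values increase, since each lies to the right of all smaller values. Hence every bottom-to-top maximum with value $<J$ sits at a position $\le I$, giving $m_i\le I$ for $i<J$, while $m_J=\pi^{-1}(J)>I$. Therefore $h^*=J-1$.

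The main obstacle I expect is the definitional edge case where no bottom-to-top maximum is positioned after $I$ with value lying among the later ones. This is settled by observing that the entry at position $N$ is always a bottom-to-top maximum, so $J$ exists. I would also check that $J\le I+1$ and that $I=0$ is handled: in that case $J=1$ and the sum reduces to $(1-x)^N$, consistent with the formula. Combining these pieces gives $R(\pi)=R_J(x)$.
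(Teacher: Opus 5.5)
Your proposal is correct and is essentially the paper's argument. The paper specializes the Dyck-path form of Theorem~\ref{t:rmain} rather than the word form, keeps only the paths $D_h$ with horizontal steps at levels $I$ and $N$, and shows $D_h \succcurlyeq \BTMP(\pi)$ if and only if $h<J$; your $\binom{N}{h}$ word count is the same thing as its factor $1/(h!(N-h)!)$ under the multinomial correspondence, and your identity $\#\{i : m_i \le I\} = J-1$ (with $m_i$ read as the position of the largest bottom-to-top maximum of value at most $i$) justifies that step more explicitly than the paper does.
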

\begin{proof}
First, recall Theorem~\ref{t:rmain}.  The sum over Dyck paths in the general formula 
    \[ R(\pi) = \frac{1}{\prod_{i=1}^N (x_i + x_{i+1} + \cdots + x_N)} \sum_{\substack{ \text{Dyck paths }\\ w \succcurlyeq \mathrm{BTM}(\pi)}} \frac{ \prod_{i=1}^N x_{w_i} }{\prod_{j=1}^N w^{(j)}!} \]
simplifies considerably as many terms are zeroed out in our specialization, where $x_I = x$ and $x_N = (1-x)$ are the only nonzero parameters.  In particular, the only Dyck paths that will contribute nonzero summands are those whose horizontal steps occur {\em only} at levels $I$ and $N$.  There are $I+1$ such Dyck paths, and we denote by $D_h$ the Dyck path from $(0,0)$ to $(N,N)$ that has $h$ horizontal steps on level $I$ followed by $N-h$ horizontal steps on level $N$, for $0 \leq h \leq I$.  Then for $h \neq 0$, we have that $D_h$ has northwest corners at $(1,I)$ and $(h+1,N)$ and nowhere else; for $h = 0$, we have exactly one northwest corner at $(1,N)$ for $D_0$. 

Since $J(\pi) = J$, the Dyck path $\BTM(\pi)$ from Definition~\ref{d:btm} has a northwest corner at $(J, m)$ for some $m > I$.  Since $J$ is the first bottom-to-top maximum after position $I$, any bottom-to-top maximum that precedes it would have a height of at most $I$.  Therefore, $D_h \succcurlyeq \BTM(\pi)$ if and only if $h < J$.  Thus, we obtain 
\begin{align*}
  R(\pi) &= \frac{1}{\prod_{i=1}^N (x_i + x_{i+1} + \cdots + x_N)} \sum_{\substack{ \text{Dyck paths }\\ w \succcurlyeq \mathrm{BTM}(\pi)}} \frac{ \prod_{i=1}^N x_{w_i} }{\prod_{j=1}^N w^{(j)}!}\\
  &= \frac{1}{(1-x)^{N-I}} \sum_{\substack{ \text{Dyck paths } w = D_h\\ \text{with } h < J} } \frac{ \prod_{i=1}^N x_{w_i} }{\prod_{j=1}^N w^{(j)}!}\\
         &= \frac{1}{(1-x)^{N-I}} \sum_{h=0}^{J-1} \frac{x^h(1-x)^{N-h}}{h!(N-h)!}\\
\end{align*}
which is equivalent to the expression $R_J(x)$ given in the statement.
\end{proof}

These $R_J(x)$ are polynomials of degree $I$ with rational coefficients, and we recover the classical uniform distribution in the cases where $I = 0$ or $x = 0$, where we interpret $0^0$ as $1$.

%%%%%%%%%%%%%%%%%%%%%%%%%%%%%%%%%%%%%%%%%%%%%%%%%%%%%%%%%%%%%%%%%%%%%
%  Subsection
%%%%%%%%%%%%%%%%%%%%%%%%%%%%%%%%%%%%%%%%%%%%%%%%%%%%%%%%%%%%%%%%%%%%%
\bigskip
\subsection{The strike probabilities}\label{s:strike}

In the remainder of this paper, we consider the game of best choice played on a
collection of interview orderings that are distributed according to the
$R_J(x)$.  We recommend \cite{weighted} or \cite{jones18} for a gentle
introduction to games of best choice from the combinatorial perspective.  

\begin{definition}\label{d:pflat}
Given a sequence of $i$ distinct integers, we define its {\bf flattening} to be
the unique permutation of $\{1, 2, \ldots, i\}$ having the same relative order
as the elements of the sequence.  Given a permutation $\pi$, define the {\bf
$i$th prefix flattening}, denoted $\pf{i}$, to be the permutation obtained by
flattening the sequence $\pi_1, \pi_2, \ldots, \pi_i$ of values appearing in
the first $i$ positions of $\pi$.  We say $\pi$ {\bf has $p$ as a prefix} if
some prefix flattening $\pf{i}$ is equal to $p$.

In the {\bf game of best choice} (with parameters $N$ and $I$) some $\pi \in
\SN_N$ is chosen randomly (here, with probability $R(\pi)$ as computed in
Theorem~\ref{t:1main}) and each prefix flattening $\pf{1}, \pf{2}, \ldots$ is
presented sequentially to the player.  For each prefix flattening, the player
chooses whether to {\bf accept} the current candidate (and stop playing) or
{\bf reject} the current candidate (and continue playing).  If the player stops
at value $N$, they {\bf win}; otherwise, they {\bf lose}.
\end{definition}

We are interested in determining the optimal strategies and probability of
winning the game.  For a prefix $p \in \bigcup_{i=1}^N \SN_i$, say that $\pi
\in \SN_N$ is {\bf $p$-acceptable} if one of its prefix flattenings is $p$ and
accepting the prefix flattening $p$ would win the game with interview order
$\pi$.  Explicitly for prefix $p = p_1 p_2 \cdots p_k$, we have that $\pi$ is
$p$-acceptable if $\pf{k} = p$ and $\pi_k = N$.  

Then for each prefix $p \in \bigcup_{i=1}^N \SN_i$, define the {\bf strike
payoff} function
\[ S(p) = \sum\limits_{p\text{-acceptable } \pi \in \SN_N}  R_{J(\pi)}(x). \]
As we will see in the proof of Theorem~\ref{t:rnap}, this represents the
(unnormalized conditional) probability of winning the game if the player
follows a strategy that opts to always stop and accept the current candidate
when prefix $p$ is presented.  

To simplify the function, we first observe that the size and $J$-statistic of
the prefix are the only information needed in order to compute the strike
payoffs.

\begin{definition}
Suppose $p$ and $q$ are two prefixes of the same size $k$, and $r$ is a
permutation that has $p$ as a prefix.  Define $\varphi_{p, q}(r)$ to be the
permutation obtained from $r$ by rearranging the first $k$ values of $r$ to
have the relative order given by $q$ instead of the relative order given by
$p$.  
\end{definition}

\begin{example}
Consider prefixes $p=1234$,  $q=2413$, and $r=124635$. Then $\varphi_{p,q}(r)=261435$.
\end{example}

\begin{theorem}\label{t:tos}
If $p$ and $q$ are prefixes with the same size and $J$-statistic, then 
\[ J(\varphi_{p, q}(r)) = J(r) \]
for all permutations $r$ having $p$ as a prefix.
\end{theorem}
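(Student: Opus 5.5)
The plan is to replace the definition of $J$ by a purely set-theoretic description and then observe that $\varphi_{p,q}$ preserves everything that description depends on. Throughout, $J$ of a prefix of size $k$ means the $J$-statistic of that permutation of $\S_k$ computed with the same $I$; this is meaningful when $k>I$, and the remaining cases are handled at the end.

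\emph{Step 1: a reformulation of $J$.} I will first prove that for any $\pi\in\S_N$,
\[ J(\pi)=\min\bigl(\{1,2,\ldots,N\}\setminus\{\pi_1,\ldots,\pi_I\}\bigr), \]
the smallest value not occurring in the first $I$ positions. Let $j$ be this smallest missing value. All values $1,\ldots,j-1$ lie in positions $\le I<\pi^{-1}(j)$, so $j$ is a bottom-to-top maximum in a position after $I$.

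Next, bottom-to-top maxima increase in both value and position. Hence any bottom-to-top maximum after position $I$ with value $j'<j$ would have to sit in a position $\le I$, which is a contradiction. So no bottom-to-top maximum after position $I$ has value smaller than $j$. Any bottom-to-top maximum after position $I$ with value larger than $j$ lies to the right of $j$. Therefore $j$ is the first bottom-to-top maximum after position $I$. (This matches the remark that $J\le I+1$, with equality exactly when $\{\pi_1,\ldots,\pi_I\}=\{1,\ldots,I\}$.) The same description applies to prefixes in $\S_k$ with $k>I$.

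\emph{Step 2: the case $k>I$.} Let $r$ have $p$ as its $k$th prefix flattening, and let $S=\{r_1,\ldots,r_k\}$ with $s_1<\cdots<s_k$. The map $\varphi_{p,q}$ does not change $S$ or any entry in positions $>k$. The first $I$ values of $r$ are $\{s_{p_1},\ldots,s_{p_I}\}$. Let $j=J(p)$, the smallest element of $[k]$ missing from $\{p_1,\ldots,p_I\}$; note $j\le I+1\le k$.

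Then $s_1,\ldots,s_{j-1}$ appear among the first $I$ positions of $r$ and $s_j$ does not. Any value smaller than $s_j$ outside $S$ is also missing. By Step 1 this gives
\[ J(r)=\min\bigl(s_j,\ \min(\{1,\ldots,N\}\setminus S)\bigr). \]
This depends only on $S$ and on $j=J(p)$. Since $\varphi_{p,q}(r)$ has the same $S$ and its prefix $q$ has $J(q)=J(p)$, we get $J(\varphi_{p,q}(r))=J(r)$.

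\emph{Step 3: the case $k\le I$.} Here the set $\{r_1,\ldots,r_I\}$ is the union of $S$ and the unchanged values in positions $k+1,\ldots,I$. This set is preserved by $\varphi_{p,q}$, so $J$ is preserved by Step 1, whatever convention is adopted for the $J$-statistic of such a short prefix. The case $k=I$ fits here as well, since then $\{r_1,\ldots,r_I\}=S$.

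The main obstacle is Step 1, specifically arguing cleanly that no bottom-to-top maximum after position $I$ precedes the smallest missing value. Once that is in place, the rest is bookkeeping about which sets $\varphi_{p,q}$ preserves.
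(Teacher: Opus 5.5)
Your proof is correct, and it is organized differently from the paper's.

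The paper argues by contradiction, working directly from the definition of bottom-to-top maxima. It observes that $\varphi_{p,q}$ could only change $J$ by moving one of $1,\ldots,J(r)-1$ to the right of position $I$, or by moving $J(r)$ to its left. Both require $I<k$. Since flattening acts as the identity on those small values, either movement would force $J(p)\neq J(q)$.

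You instead first prove that $J(\pi)$ is the smallest value absent from positions $1,\ldots,I$. The paper uses only one direction of this, inside the proof of Lemma~\ref{l:mj}. You then compute $J(r)=\min\bigl(s_{J(p)},\min([N]\setminus S)\bigr)$ explicitly, so invariance follows from a formula in $S$ and $J(p)$ rather than from a case analysis of how values might move.

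This gains rigor. The paper leaves implicit the distinction between $J(r)$ lying inside the prefix and lying beyond position $k$, and your formula handles both uniformly.

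It also gains generality. Let $m=\min([N]\setminus S)$. Then $s_i=i$ for all $i<m$ and $s_m>m$, so your formula simplifies to $J(r)=\min(J(p),m)$. From this, Lemma~\ref{l:pj} and the identity $J(\f_{q,r}(\pi))=\min(J(\pi),J(r))$ of Theorem~\ref{t:sqr} (for $k>I$) follow at once.

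The paper's route stays within the original definition and needs no auxiliary lemma. Yours gives a sturdier foundation for the later folding results.
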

\begin{proof}
Suppose that $p$ has size $k$.  Let $r$ be any longer permutation having $p$ as
a prefix, so $\varphi_{p,q}(r)$ is a permutation with the same size as $r$ but
having $q$ as a prefix.

By definition, $J(r)$ is the first bottom-to-top maximum after position $I$ in
$r$, so $r$ must have $J(r)-1$ and all smaller values lying to the left of
position $I$.  Observe that the only ways for $\varphi_{p,q}$ to change the
$J$-statistic are to move one of these smaller values to the right of position
$I$ in $\varphi_{p,q}(r)$, or else to move $J(r)$ to the left of position $I$.

For either of these to occur, we must have $I < k$.  However, the values $1, 2,
\ldots, J(r)-1$ all lie to the left of $k$ so are represented by the same
values in the prefix $p$ as in the prefix $r$.  That is, the flattening
applied to these values in $r$ acts as the identity map.  When we apply
$\varphi_{p,q}$ to $r$, we rearrange all of the values in the first $k$
positions to obtain the relative ordering given by $q$.  So if any of the
values $1, 2, \ldots, J(r)-1$ or $J(r)$ moved past position $I$ it would
contradict our assumption that $J(p) = J(q)$.

Thus, $J(\varphi(r)) = J(r)$ in all cases.
\end{proof}

\begin{definition}
Let $\sigma_J(k, j)$ be the number of $\pi \in \SN_N$ with $J(\pi) = J$ that are $p$-acceptable for some prefix $p$ whose size is $k$ and whose $J$-statistic is $j$.
\end{definition}

Since $\varphi_{p,q}$ restricts to a bijection of
\[ \{p\text{-acceptable}\ \pi \in \SN_N\} \rightarrow \{q\text{-acceptable}\ \pi \in \SN_N\} \] 
that preserves the $J$-statistic by Theorem~\ref{t:tos}, this is well-defined.
Thus, we may rewrite the strike payoff function as
\[ S(p) = \sum_{J=1}^{I+1} \sigma_J(k, j) R_J(x) \]
where $k$ is the size of $p$ and $j$ is $J(p)$.  This is essentially a dot
product of two vectors along ``coordinates'' that are given by values of the
$J$-statistic.  Interestingly, the $\sigma_J(k, j)$ vectors do not depend on $x$
so represent strategic information that applies to any single-step model,
regardless of how severe the reselection step is.  In order to describe these
$\sigma_J(k, j)$ functions explicitly, we first determine the distribution of the
$J$-statistic over all of $\SN_N$.

\begin{lemma}\label{l:mj}
Fix $N$ and $0 \leq I < N$.  The total number of $\pi \in \SN_N$ with $J(\pi) = J$ is given by 
\[ M_J(N, I) := \frac{(N-I) I! (N-J)!}{(I-J+1)!}. \]
\end{lemma}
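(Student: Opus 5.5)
Our plan is to characterize the event $J(\pi)=J$ directly in terms of where the small values sit, and then count by placing values in increasing order. By definition, $J(\pi)$ is the value of the first bottom-to-top maximum occurring in a position after $I$. Reading values $1,2,3,\ldots$ in increasing order, the bottom-to-top maxima are exactly the values whose positions exceed all previous positions. So the first bottom-to-top maximum lying to the right of position $I$ is the smallest value whose position exceeds $I$. Hence
\[ J(\pi)=J \iff \pi^{-1}(1),\ldots,\pi^{-1}(J-1)\le I \ \text{ and } \ \pi^{-1}(J)>I. \]
The first step is to verify this equivalence carefully. One direction is immediate. For the other, if values $1,\ldots,J-1$ all sit in positions $\le I$ and $J$ sits in a position $>I$, then $J$ lies to the right of every smaller value, so $J$ is a bottom-to-top maximum. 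Moreover, no smaller value is a bottom-to-top maximum after position $I$, since none of them lies after position $I$.

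The second step is to count. We place values $1,\ldots,J-1$ in distinct positions among the first $I$; this gives $I!/(I-J+1)!$ choices. We then place value $J$ in one of the $N-I$ positions after $I$. Finally, we arrange the remaining $N-J$ values in the remaining positions in $(N-J)!$ ways. Multiplying gives
\[ M_J(N,I)=\frac{(N-I)\,I!\,(N-J)!}{(I-J+1)!}. \]
This is valid for $1\le J\le I+1$, and the boundary case $J=I+1$ gives $(N-I)\,I!\,(N-I-1)!$, as expected. As a sanity check, we can confirm that summing over $J$ gives $N!$ for small cases; for example, $N=2$, $I=1$ gives $1+1=2$.

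The main obstacle is the characterization in the first step. One must be careful that a value exceeding $I$ in position but preceded by a larger-position smaller value cannot arise before $J$. This is handled by noting that all smaller values are confined to positions $\le I$. Once that is settled, the count is routine.
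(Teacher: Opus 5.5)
Your proof is correct and follows essentially the same route as the paper: both reduce $J(\pi)=J$ to the condition that the values $1,\ldots,J-1$ occupy positions $\le I$ while $J$ sits in a position after $I$, and then count. The only difference is bookkeeping in the count. You place the small values directly via the falling factorial $I!/(I-J+1)!$, whereas the paper chooses the remaining $I-J+1$ values for the first block, giving $\binom{N-J}{I-J+1}\, I!\,(N-I)!$, which simplifies to the same expression.
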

\begin{proof}
Since $J(\pi) = J$, the value $J$ occurs in some position larger than $I$ and
the values $1, 2, \ldots, J-1$ must occur before the position of $J$.  

We claim that each of the values $1, 2, \ldots, J-1$ must actually occur in
positions that are each less than or equal to $I$.  To see this, suppose for
the sake of contradiction that $1 \leq L \leq J-1$ is the first value occurring
after position $I$ (and before position $J$).  Then each of the values $1, 2,
\ldots, L-1$ occur to the left of $I$ so $L$ would be a bottom-to-top maximum
contradicting that $J(\pi) = J$.  

Thus, the values $1, 2, \ldots, J-1$ can occur anywhere among the positions
less than or equal to $I$ and $J$ can occur anywhere among the final $N-I$
positions.  There are $\binom{N-J}{I-(J-1)}$ ways to choose the remaining
$I-(J-1)$ values for the first $I$ positions and $I!$ ways to order all of
them.  There are $(N-I)!$ ways to order the remaining $N-I$ terms after the
$I$th position, yielding a total count of $\binom{N-J}{I-(J-1)} I! (N-I)!$
which is equivalent to the stated result.
\end{proof}

\begin{theorem}\label{t:main_mu}
Suppose $N$ and $0 \leq I < N$ are fixed.  For $J = 1, 2, \ldots, I+1$, we have
\[ \sigma_J(k, j) = \begin{cases}
        \frac{(N-I)(N-J-1)!(I-1)!}{(I-J)!(k-1)!} = \frac{1}{(k-1)!} M_J(N-1, I-1) & \text{ if $1 \leq k \leq I$ } \\
        \ & \ \\
        \frac{(N-k)(N-J-1)!}{(k-J)!} = \frac{1}{(k-1)!} M_J(N-1, k-1) & \text{ if $I+1 \leq k \leq N$ and $J < j$ } \\
        \frac{(N-j)!}{(k-j)!} = \sum_{T = j}^N \frac{1}{(k-1)!} M_{T}(N-1, k-1) & \text{ if $I+1 \leq k \leq N$ and $J = j$ } \\
        0 & \text{ if ($k \leq I$ and $J = I+1$)  } \\
         & \text{ \ \ or ($I+1 \leq k \leq N$ and $J > j$) } \\
%  We need to be a bit careful with the alternate formulas in terms of M_J...  To get the "basis vector" at k = N, need the (N-k) to be zero explicitly, not sure this happens for the M_J term.
%        0 & \text{ if $k = N$ and $J \neq j$ } \\
%        1 & \text{ if $k = N$ and $J = j$. } \\  these are redundent if we use the explicit (non-recursive) formulas
\end{cases} \]
\end{theorem}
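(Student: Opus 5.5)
The plan is to exploit the freedom, guaranteed by Theorem~\ref{t:tos}, to choose any convenient prefix $p$ of size $k$ with $J(p)=j$. We then count the $p$-acceptable $\pi$ with $J(\pi)=J$ by removing the entry $N$ at position $k$. This leaves a permutation $\sigma\in\SN_{N-1}$ whose first $k-1$ entries flatten to $p$ with its last entry deleted. The count for a fixed $p$ is then obtained by dividing a total count by the number of prefixes in the appropriate symmetry class, which will be $(k-1)!$ in every case. Throughout, the model computation is the one in Lemma~\ref{l:mj}: the values below the $J$-statistic must sit in an initial block of positions, and the remaining values are placed freely.

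\emph{Case $k\le I$.} Here $N$ occupies a slot among the first $I$ positions. Deleting it gives $\sigma\in\SN_{N-1}$, and the reselect position effectively becomes $I-1$. Since values $1,\dots,J-1$ keep their positions relative to the block of the first $I$ positions, we get $J(\pi)=J$ if and only if the $J$-statistic of $\sigma$ with respect to $I-1$ equals $J$. Moreover, permuting the first $k-1$ values among themselves does not change the set of values in the first $I$ positions, so it preserves $J$. Hence all $(k-1)!$ prefixes of size $k$ ending in their maximum contribute equally. This gives $\sigma_J(k,j)=M_J(N-1,I-1)/(k-1)!$. When $J=I+1$ the count vanishes, because $N$ uses one of the first $I$ slots, so $\{1,\dots,I\}$ cannot fill them. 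Expanding $M_J(N-1,I-1)$ by Lemma~\ref{l:mj} yields the explicit form.

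\emph{Case $k\ge I+1$.} First I would show that $J(\pi)\le j$ always. The flattening is the identity on the small values that lie in the prefix. If $J(\pi)$ lies in positions $I+1,\dots,k-1$, then $J(\pi)=j$. If $J(\pi)$ lies after position $k$, then $1,\dots,J(\pi)-1$ all lie in the first $I$ positions, which forces $j\ge J(\pi)$, with equality precisely when the prefix's own $J$-statistic is achieved by the value $k$, i.e.\ by $N$ itself. This gives the zero case $J>j$.

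For $J<j$: $J(\pi)=J$ must lie after position $k$, and values $1,\dots,J-1$ must lie in the prefix. Equivalently, after deleting $N$, value $J$ is the first bottom-to-top maximum of $\sigma$ after position $k-1$; the stronger requirement that $1,\dots,J-1$ lie in the first $I$ positions is encoded in $p$, since $j>J$. So these $\pi$ are in bijection with the $\sigma$ counted by $M_J(N-1,k-1)$ whose $(k-1)$-prefix flattens to the fixed pattern. Permuting the first $k-1$ values preserves the $k-1$ version of the statistic, so we divide by $(k-1)!$. For $J=j$, we sum over all possible positions of the true next bottom-to-top maximum $T\ge j$ after position $k-1$ in $\sigma$. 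This yields $\sum_{T\ge j} M_T(N-1,k-1)/(k-1)!$, which should telescope to $(N-j)!/(k-j)!$. Alternatively, this count can be obtained directly: fix the $j-1$ smallest values in the prefix and choose the other prefix values from $\{j,\dots,N-1\}$.

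The main obstacle is the $J=j$ subcase with $k\ge I+1$. Here I must check carefully that fixing $p$ (rather than just its size and $j$) gives exactly a $1/(k-1)!$ fraction of the relevant $\sigma$. I would handle this directly, by choosing $p$ canonically and counting placements: choose the $k-j$ other prefix values from the $N-j$ values of $\{j,\dots,N-1\}$ in increasing order, then arrange the suffix freely in $(N-k)!$ ways. This gives $\binom{N-j}{k-j}(N-k)!=(N-j)!/(k-j)!$, and I would verify agreement with the $M_T$ sum afterward.
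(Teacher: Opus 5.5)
Your argument is correct, but it takes a different route from the paper. The paper counts directly by blocks. It forces $1,\dots,J-1$ and $N$ into the $I$-block (when $k\le I$) or the $k$-block (when $k>I$), chooses the remaining $k$-block values, whose order is then dictated by $p$, and arranges the rest freely. For $J=j$ with $k>I$ it adds two subcases separately: $J$ outside the $k$-block and $J$ inside it. In the paper, the $M_J$ expressions are only algebraic rewritings of the resulting factorial formulas.

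You instead delete $N$. This turns the problem into counting $\sigma\in\SN_{N-1}$ whose $J$-statistic relative to a shifted cut is prescribed: the cut is $I-1$ if $k\le I$ and $k-1$ if $k>I$. You then invoke Lemma~\ref{l:mj} and divide by $(k-1)!$, since permuting the first $k-1$ entries preserves that statistic.

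Your route proves the $\frac{1}{(k-1)!}M_J(N-1,\cdot)$ forms directly and explains their shape. The paper's count reaches the closed forms without the extra identity your $J=j$ sum needs. That identity is the hockey-stick identity $\sum_{T=j}^{k}\binom{N-1-T}{N-1-k}=\binom{N-j}{N-k}$, not a telescoping sum. Your one-step direct count for $J=j$, choosing the $k-j$ remaining prefix values from $\{j,\dots,N-1\}$, neatly merges the paper's two subcases into one.

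Two small fixes.

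First, your remark ``with equality precisely when the prefix's own $J$-statistic is achieved by the value $k$, i.e.\ by $N$ itself'' is false. Take $N=5$, $I=1$, $\pi=14523$, $k=3$. Then $J(\pi)=2$ sits at position $4>k$, and $j=J(123)=2$ is attained by the prefix value $2$, which comes from $\pi$'s entry $4$, not by $N$. The correct condition is that the smallest of $\pi_1,\dots,\pi_k$ exceeding $J(\pi)$ lies after position $I$. Nothing later depends on this remark, since your $T=j$ term already counts such $\pi$ for every prefix, so simply delete it.

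Second, at $k=N$ the deletion step degenerates. The $J$-statistic of $\sigma\in\SN_{N-1}$ relative to position $N-1$ is undefined, and Lemma~\ref{l:mj} requires $k-1<N-1$. Treat $k=N$ directly: there $\pi=p$, giving $1$ when $J=j$ and $0$ otherwise. Your direct count and the explicit formulas both agree with this.
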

\begin{proof}
First, suppose that $I \geq k$.  We decompose the elements of $\pi$ into {\bf blocks} based on their position, as shown in the schematic illustration below.
\begin{figure}[htbp]
\centering
%For I> the length of p 
\begin{tikzpicture}

% Draw main line
\draw (0,0) -- (10,0);

% Left endpoint
\draw (0,0.2) -- (0,-0.2);
\node[below] at (0,0) {};

% Mark at (3,0)
\draw (3,0.2) -- (3,-0.2);
\node[below] at (3,0) {};

% Mark at (4,0)
\draw (4,0.2) -- (4,-0.2);
\node[below] at (4,0) {};

% Very thick highlighted line between (3,0) and (4,0)
\draw[ultra thick, blue] (3,0) -- (4,0);

% N label slightly lower between (3,0) and (4,0)
\node[above=1pt] at (3.5,0) {$N$};

% I point
\draw (6,0.2) -- (6,-0.2);
\node[below] at (6,0) {$I$};

% Mark at (7,0)
\draw (7,0.2) -- (7,-0.2);
\node[below] at (7,0) {};

% Mark at (8,0)
\draw (8,0.2) -- (8,-0.2);
\node[below] at (8,0) {};

% Very thick highlighted line between (7,0) and (8,0)
\draw[ultra thick, blue] (7,0) -- (8,0);

% j label slightly lower between (7,0) and (8,0)
\node[above=1pt] at (7.5,0) {$J$};

% Right endpoint
\draw (10,0.2) -- (10,-0.2);
\node[below] at (10,0) {$N$};

% Braces and labels
\draw [decorate,decoration={brace,amplitude=5pt,mirror}] (0,-0.5) -- (4,-0.5);
\node[below=7pt] at (2,-0.5) {$k$ block};

\draw [decorate,decoration={brace,amplitude=5pt,mirror}] (4,-0.5) -- (6,-0.5);
\node[below=7pt] at (5,-0.5) {$k^c$ block};

\draw [decorate,decoration={brace,amplitude=5pt}] (0,0.5) -- (6,0.5);
\node[above=7pt] at (3,0.5) {$I$ block};

\draw [decorate,decoration={brace,amplitude=5pt}] (6,0.5) -- (10,0.5);
\node[above=7pt] at (8,0.5) {$N-I$ block};

\end{tikzpicture}
\caption{$I$ is greater than or equal to $k$}
\label{fig:Case1}
\end{figure}

By the same logic as in the proof of Lemma~\ref{l:mj}, the values $1, 2, \ldots, J-1$ must occur in the $I$-block because $J = J(\pi)$.  Since $\pi$ is $p$-acceptable, $N$ also appears in $I$ block.  Therefore, we need to fill $I-J$ remaining positions with values from $N-(J+1)$ available choices, which can be done in $\binom{N-(J+1)}{I-J}$ ways.

To fill out the $k$ block, there are $\binom{I-1}{k-1}$ possible values from the $I$ available choices, because the position of $N$ is already fixed due to being $p$-acceptable. There is only one way to assign the relative ordering of the values in the $k$ block, as determined by the prefix $p$. To fill out the $k^c$ block, the values are already fixed by our prior choices and there are $(I-k)!$ possible ways to order them. Since the $J$ statistic can be placed in any of the $N-I$ positions and there are $(N-I-1)!$ possible ways to order the values we already selected for the $N-I$ block, the total number of $\pi$ for this case is
\[ \binom{N-(J+1)}{I-J} \binom{I-1}{k-1} (I-k)! (N-I) (N-I-1)! \]
which is equivalent to the stated formula.

Next, suppose that $I < k < N$.  Then, the block decomposition for $\pi$ is
shown the schematic illustration below.  In this case we must be careful about
distinguishing between $j = J(p)$ and $J = J(\pi)$.  

\begin{figure}[htbp]
    %For I< the length of p 
    \centering
\begin{tikzpicture}

% Draw main line
\draw (0,0) -- (10,0);

% Left endpoint
\draw (0,0.2) -- (0,-0.2);
\node[below] at (0,0) {};

% I point
\draw (3,0.2) -- (3,-0.2);
\node[below] at (3,0) {$I$};

% Mark at (5,0)
\draw (5,0.2) -- (5,-0.2);
\node[below] at (5,0) {};

% Mark at (6,0)
\draw (6,0.2) -- (6,-0.2);
\node[below] at (6,0) {};

% Very thick highlighted line between (5,0) and (6,0)
\draw[ultra thick, blue] (5,0) -- (6,0);

% N label slightly lower between (5,0) and (6,0)
\node[above=1pt] at (5.5,0) {$N$};

% m point
\draw (6,0.2) -- (6,-0.2);
\node[below] at (6,0) {$k$};

% Right endpoint
\draw (10,0.2) -- (10,-0.2);
\node[below] at (10,0) {$N$};

% Braces and labels
\draw [decorate,decoration={brace,amplitude=5pt}] (0,0.5) -- (6,0.5);
\node[above=7pt] at (3,0.5) {$k$ block};

\draw [decorate,decoration={brace,amplitude=5pt}] (6,0.5) -- (10,0.5);
\node[above=7pt] at (8,0.5) {$k^c$ block};

\end{tikzpicture}
\caption{$I$ is less than $k$}
\label{fig:Case3}
\end{figure}

First, note that if $J = J(\pi)$ lies in the $k$ block then all of the values
$1, 2, \ldots, J$ lie within the $k$ block.  When we consider the flattening of
these values for the prefix $p$, we observe that they are each represented by
the same value that they have in $\pi$.  Therefore, we find that $j = J(p) =
J(\pi) = J$ in this case.  Also, when $J = J(\pi)$ lies in the $k^c$ block we
must have $J(\pi) \leq J(p)$ since the values $1, 2, \ldots, J-1$ all lie to
the left of $I$ and so represent the same values in $p$ as they do in $\pi$.

Thus, if $j > J$ then we have that $J$ lies in the $k^c$ block and
we proceed to count these.  Since values $1, 2, \ldots, J-1$ and $N$ already
lie in the $k$ block, there are $\binom{N-(J+1)}{k-J}$ possible ways to select
the remaining values for the $k$ block.  Then, the $J$-statistic can be placed
into any of the $N-k$ positions and there are $(N-k-1)!$ possible ways to order
the values selected for the $k^c$ block.  Thus, there are
$\binom{N-(J+1)}{k-J}\times (N-k) \times (N-k-1)!$ such placements all
together, which agrees with the second formula in the statement.

Moreover, if $j = J$ then we have that the value $J$ can lie in any of the
positions (beyond $I$, except for the position of $N$) in either block of
$\pi$.  So, to find the number of $\pi$ with $j = J$, we must add to the previous
formula the number of permutations $\pi$ having $J$ in the $k$ block.  To fill
out the $k$ block, we need to choose $k-(J+1)$ elements from $N-(J+1)$
available elements since $1, 2, \ldots, J$ and $N$ are all already in the $k$
block.  Then there are $\binom{N-(J+1)}{k-(J+1)}$ possible combinations and
there is only one way to assign the relative order because of the prefix $p$.
Also, there are $(N-k)!$ possible ways to fill out the $k^c$ block.  Thus, we
obtain a total of 
\[ \binom{N-(J+1)}{k-J} (N-k) (N-k-1)! + \binom{N-(J+1)}{k-(J+1)} (N-k)! = \binom{N-J}{k-J} (N-k)!\]
permutations in this case, which agrees with the third formula in the
statement.
\end{proof}

%%%%%%%%%%%%%%%%%%%%%%%%%%%%%%%%%%%%%%%%%%%%%%%%%%%%%%%%%%%%%%%%%%%%%
%  Section
%%%%%%%%%%%%%%%%%%%%%%%%%%%%%%%%%%%%%%%%%%%%%%%%%%%%%%%%%%%%%%%%%%%%%
\bigskip
\section{Deriving the optimal strategy}\label{s:4}

%%%%%%%%%%%%%%%%%%%%%%%%%%%%%%%%%%%%%%%%%%%%%%%%%%%%%%%%%%%%%%%%%%%%%
%  Subsection
%%%%%%%%%%%%%%%%%%%%%%%%%%%%%%%%%%%%%%%%%%%%%%%%%%%%%%%%%%%%%%%%%%%%%
\bigskip
\subsection{The optimal strategy}\label{s:strat}

Next, we want to define an interior payoff that is comparable to the strike payoff $S(p)$
but measures the outcome of {\em rejecting} the prefix flattening $p$, {\em
assuming that the player uses an optimal strategy} on all subsequent moves.
Since we don't yet know the optimal strategy explicitly, we must define this
interior payoff function, which we denote $S^\circ(p)$, recursively as follows.

\begin{definition}\label{d:sint}
Suppose that $p$ is a prefix of size $k$.  Define an {\bf immediate child} of
$p$ to be a prefix $q$ of size $k+1$ that has its first $k$ entries in the same
relative order as $p$.  Each prefix $p$ has $k+1$ immediate children
distinguished by the value in their last position (as the complementary values
must all appear in the relative order specified by $p$).  Only one of the
immediate children ends in a left-to-right maximum (namely, the value $k+1$
itself); denote this child by $\bar p$.  

Then we define the {\bf interior payoff} function
\[ S^\circ(p) = \max\left( S(\bar p), S^\circ(\bar p) \right) + \sum\limits_{\text{other immediate children $q$ of $p$}} S^\circ(q) \]
recursively in terms of $S$ and $S^\circ$ values of prefixes having strictly greater size.
For initial conditions, we use that $S^\circ(p) = 0$ whenever $p$ has size $N$.  (We use the ``topological interior'' notation here to emphasize that the
interior payoff is a sum of certain strike payoffs from the interior of the
``prefix tree'' rooted at $p$, but never includes $S(p)$ itself.)
\end{definition}

We are now in position to compare the two moves available to the player presented with prefix flattening $p$.
\begin{definition}\label{d:posneg}
If $S(p) \geq S^\circ(p)$ we call $p$ a {\bf positive prefix}, otherwise we say
that $p$ is a {\bf negative prefix}.
\end{definition}

\begin{theorem}\label{t:rnap}
The strategy that rejects each negative prefix and accepts the first positive
prefix encountered in the game of best choice is optimal.
\end{theorem}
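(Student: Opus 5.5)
We will argue by backward induction on the size of the prefix, viewing the game as a finite-horizon optimal stopping problem on the prefix tree. The first step is to make the probabilistic meaning of $S$ and $S^\circ$ precise. For a prefix $p$ of size $k$, let $W(p) = \sum_{\pi \text{ has } p \text{ as a prefix}} R(\pi)$ be the (unnormalized) weight of reaching $p$. Since $\pi$ is $p$-acceptable exactly when $\pi$ has prefix $p$ and $\pi_k = N$, the ratio $S(p)/W(p)$ is the conditional probability of winning by accepting at $p$, given that $p$ is observed. Note that only prefixes ending in a left-to-right maximum can have $S(p)\neq 0$, since $\pi_k=N$ forces $p_k=k$.

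The main claim, proved by downward induction on $k$ from $k=N$ to $k=1$, is the following. For every prefix $p$ of size $k$, the maximum over all strategies that reject $p$ of the unnormalized winning weight $\sum_{\pi \text{ with prefix } p} R(\pi)\,\mathbf 1[\text{strategy wins on }\pi]$ equals $S^\circ(p)$. Moreover, this maximum is attained by the rule ``reject negatives, accept the first positive'' applied from step $k+1$ on. The base case $k=N$ is trivial, because no candidates remain and the payoff is $0 = S^\circ(p)$.

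For the inductive step, the set of $\pi$ with prefix $p$ partitions into the sets of $\pi$ with prefix $q$, where $q$ ranges over the $k+1$ immediate children. A strategy that rejects $p$ then faces one of these children and decides independently on each branch. The decision on each branch is measurable, because the player sees exactly $q$. Hence the optimal unnormalized payoff is the sum over children $q$ of $\max(S(q), V(q))$, where $V(q)$ is the optimum after rejecting $q$. By the inductive hypothesis, $V(q) = S^\circ(q)$.

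For $q \ne \bar p$ the last entry is not $k+1$, so no $\pi$ is $q$-acceptable and $S(q) = 0 \le S^\circ(q)$. The sum therefore becomes exactly the recursion of Definition~\ref{d:sint}. Within each branch the maximum is achieved by accepting $q$ iff $S(q)\ge S^\circ(q)$, that is, iff $q$ is positive. For $q\neq\bar p$ with $S^\circ(q)=0$ this tie is harmless, since accepting yields $0$ either way. The same holds for any tie, because both choices attain the max.

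Applying the claim at the root gives the conclusion. At size $1$ the player faces the single prefix $1$ and chooses $\max(S(1), S^\circ(1))$, so the stated strategy attains the global optimum, which is then divided by the total normalization $\sum_\pi R(\pi)=1$. The main obstacle I expect is the careful justification that the strategy can be optimized branchwise. Concretely, this means showing that any (possibly randomized) strategy's payoff on the subtree at $p$ decomposes additively over children with each child's contribution bounded by $\max(S(q),S^\circ(q))$. The key point is that strategies are functions of the observed prefix flattenings only, so the subtrees are disjoint and decisions in them are independent. A randomized choice is a convex combination and cannot exceed the max.
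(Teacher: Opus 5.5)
Your proof is correct and follows the same route as the paper. Both argue by backward induction over the prefix tree starting at size $N$, and both decompose the payoff after rejecting $p$ over the disjoint subtrees of its immediate children, which reproduces the recursion of Definition~\ref{d:sint}. Your version is more explicit than the paper's in two places: you state the inductive hypothesis as ``$S^\circ(p)$ is the optimal value after rejecting $p$,'' and you note that $S(q)=0$ for every child $q\neq\bar p$, which is exactly why the $\max$ appears only at $\bar p$.
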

\begin{proof}
Consider the tree of possible moves for a game of best choice.  Each node
consists of a prefix that we may as well assume ends in a left-to-right maximum
(for otherwise, the strike payoff is zero so the prefix is negative) or else
consists of a complete permutation of size $N$.  These nodes represent the
possible prefix flattenings that are presented to the player throughout the
game.

The nodes are partially ordered by containment of prefixes.  For example, the
root node of the game is the prefix $1$.  This node is directly connected to
prefixes $12$, $213$, $3124$, and others; the prefix $213$ is directly
connected to $2134$ as well as $31425$ and others.  At each node, the player
may accept the current candidate or reject the current candidate.  If no
prefixes are accepted, the player eventually encounters the permutations of
size $N$ and must accept the last candidate (whether they are best or not).

To determine the optimal strategy, begin at the permutations of size $N$.  We
view these as prefixes $p$ that are encountered by a player who has not yet
accepted any candidate.  Then, $S(p)$ is clearly $0$ or $1$ (depending on
whether the current candidate is $N$) while $S^\circ(p)$ is $0$ because there
is no way to reject a prefix that is already size $N$.  Thus, these prefixes
are all positive and accepting any of them is optimal (in fact, it is the
only allowable move left in the game).

Next, we argue by induction.  Suppose that our ``reject negativity, accept
positivity'' strategy is optimal when applied to any prefixes of size $k+1,
k+2, \ldots, N$.  Consider a prefix of size $k < N$.  Such a prefix necessarily
ends in a left-to-right maximum.

Let $\pi \in \SN_N$ represent the true sequence of candidate ranks in the
particular game being presented to the player and consider the moves available
in response to the prefix flattening $p$.  Our probabilities are conditioned on
$\pi \in \SN_N$ having $p$ as a prefix flattening.  By definition, accepting
the current candidate has a probability of winning given by $S(p)$ (divided by
a sum of $R_{J(\pi)}(x)$ terms over all $\pi$ that have $p$ as a prefix
flattening).  On the other hand, by Definition~\ref{d:sint}, rejecting the
current candidate has a probability of winning given by $S^\circ(p)$ (divided
by the same sum of $R_{J(\pi)}(x)$ terms over all $\pi$ that have $p$ as a
prefix flattening).  That is, the formula in Definition~\ref{d:sint} describes
the optimal win probabilities on each subtree from the immediate children; as
these subtrees are disjoint events whose union is the event we are calculating,
we simply sum the results.  We note that determining $S^\circ(p)$ explicitly
does require knowing the optimal strategy from this point in the game onward,
but we may assume that we have already computed $S^\circ(q)$ values and
assigned ``positive'' and ``negative'' status for each prefix $q$ with length
greater than $k$, by the induction hypothesis.  Thus, $S^\circ(p)$ is
well-defined.

Hence, the same ``reject negativity, accept positivity'' strategy is seen to be
optimal for the prefix $p$.  By induction, this strategy is optimal for all the
nodes in the tree.  In particular, $S^\circ(\emptyset) := \max(S(1),
S^\circ(1))$ is the probability of winning the game overall, playing from the
initial prefix flattening $1$ under the optimal strategy.
\end{proof}

Although this setup can be used to solve particular games, we believe that the
$\max$ statement in the recursion for $S^\circ(p)$ will make it unlikely to
find simple expressions or general patterns.  For example, while we could
refine $S^\circ(p)$ by $J$-statistic, defining $\sigma^\circ_J(p)$ via
$S^\circ(p) = \sum_{J=1}^{I+1} \sigma^\circ_J(p) R_J(x)$,
the coefficients $\sigma^\circ_J$ will not share the same kind of
recurrence as in Definition~\ref{d:sint} because the $\max$ statement is not
linear.  Therefore, we consider the following variation.

\begin{definition}\label{d:tc}
Let $T^\circ$ be defined by a recurrence that replaces the $\max$ statement
with the fixed choice $S(\bar p)$:
\begin{equation}\label{e:tc}
 T^\circ(p) = S(\bar p) + \sum\limits_{\text{other immediate children $q$ of $p$}} T^\circ(q). 
\end{equation}
The initial conditions are $T^\circ(p) = 0$ for all $p$ of size $N$.
We refer to this as the {\bf ersatz interior payoff function}.
This function represents the payoff that follows from always rejecting the
current candidate when prefix $p$ is presented and {\em accepting any
subsequent prefix that ends in a left-to-right maximum (i.e. that always
accepts the next ``best so far'' candidate)}.  
\end{definition}

\begin{definition}\label{d:reject}
For a prefix $p \in \bigcup_{i=1}^N \SN_i$, say that $\pi \in \SN_N$ is {\bf
$p$-rejectable} if one of its prefix flattenings is $p$ and rejecting the
prefix flattening $p$ would win the game with interview order $\pi$, assuming
that the player always accepts the next left-to-right maximum occurring after
$p$.  Explicitly, $\pi$ must have a form in which $\pf{k} = p$ and there are no
left-to-right maxima among positions between $\pi_k$ and the position of the
value $N$.  (The value $N$ is always the final left-to-right maximum in $\pi$.)
\end{definition}

\begin{lemma}\label{l:nop2}
Suppose $p$ and $q$ are prefixes with the same size, and the same
$J$-statistic.  Then, $S^\circ(p) = S^\circ(q)$, and $T^\circ(p) = T^\circ(q)$.
\end{lemma}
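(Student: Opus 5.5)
We argue by downward induction on the common size $k$ of $p$ and $q$, proving both equalities at once, since $S^\circ$ and $T^\circ$ are defined by recurrences of the same shape. The tool is the bijection $\varphi_{p,q}$ together with Theorem~\ref{t:tos}, which says $\varphi_{p,q}$ preserves the $J$-statistic of every permutation having $p$ as a prefix. For the base case $k = N$, both $S^\circ$ and $T^\circ$ vanish identically by their initial conditions, so there is nothing to prove.

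For the inductive step, take $p,q$ of size $k<N$ with $J(p)=J(q)$, and assume the lemma holds for all pairs of prefixes of size $k+1$ with equal $J$-statistic. The map $\varphi_{p,q}$ carries prefixes extending $p$ to prefixes extending $q$. Restricted to immediate children, it gives a bijection $p' \mapsto q'$ from the children of $p$ to those of $q$: the child whose last entry has relative value $v$ goes to the child of $q$ whose last entry has relative value $v$. This bijection sends $\bar p$ to $\bar q$, because the last value $k+1$ is untouched.

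The key claim is that $J(q') = J(p')$ for each matched pair. If $I \geq k+1$, the $J$-statistic of a prefix of size at most $I$ is vacuous or uniformly determined, so there is nothing to check; equivalently, the paper's convention for $J$ of short prefixes is consistent across prefixes of the same size. If $I < k+1$, apply Theorem~\ref{t:tos} with $r = p'$, regarded as a permutation of size $k+1$ having $p$ as a prefix. Then $\varphi_{p,q}(p') = q'$, so $J(q') = J(p')$. I expect this to be the only delicate point. Theorem~\ref{t:tos} is stated for permutations $r$ of arbitrary length, and its proof only uses that the values $1,\dots,J(r)$ keep their positions relative to $I$, so it applies verbatim to the length-$(k+1)$ prefix $r=p'$. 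One should also check that the $J$-statistic of a prefix is computed on its flattening, with $I$ interpreted in the same way, which is how $J(p)$ is used in Theorem~\ref{t:main_mu}.

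With the claim in hand, we compare the recurrences term by term. Since $\bar p$ and $\bar q$ have the same size and the same $J$-statistic, $S(\bar p) = \sum_J \sigma_J(k+1,j) R_J(x) = S(\bar q)$ by the dot-product form of $S$ that follows Theorem~\ref{t:tos}. By the inductive hypothesis, $S^\circ(\bar p)=S^\circ(\bar q)$, and $S^\circ(p')=S^\circ(q')$ and $T^\circ(p')=T^\circ(q')$ for every other matched pair of children. Substituting these equalities into Definition~\ref{d:sint} gives $S^\circ(p)=S^\circ(q)$, since the $\max$ is applied to equal arguments. Substituting them into \eqref{e:tc} gives $T^\circ(p)=T^\circ(q)$. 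This completes the induction.
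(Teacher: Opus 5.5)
Your proof is correct and rests on the same key ingredient as the paper's, namely the $J$-preserving bijection $\varphi_{p,q}$ of Theorem~\ref{t:tos}. The paper unrolls the recursions into sums of $S(r)$ over descendant prefixes $r$ and applies $\varphi_{p,q}$ to all of them at once, whereas you induct downward one level at a time through matched immediate children, which has the modest advantage of showing explicitly why the $\max$ choices in $S^\circ$ agree at matched descendants — something the paper's one-line unrolling tacitly assumes.
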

\begin{proof}
Notice that repeatedly applying the recursive formulas allows us to eventually
write $S^\circ(p)$ and $T^\circ(p)$ as a sum of $S(r)$ values over a set of
prefixes $r$ that all contain $p$ as a prefix.  As $\varphi_{p,q}$ is a
bijection that preserves the $J$-statistic by Theorem~\ref{t:tos}, applying
$\varphi_{p,q}$ to each of these $r$ produces the corresponding sum for
$S^\circ(q)$ and $T^\circ(q)$.  
\end{proof}

\begin{definition}
Let $\tau^\circ_J(k, j)$ be the number of $\pi \in \SN_N$ with $J(\pi) = J$ that are $p$-rejectable for some prefix $p$ whose size is $k$ and whose $J$-statistic is $j$.
\end{definition}

This is well-defined by Lemma~\ref{l:nop2}.  Thus, we may rewrite the interior payoff function as
\[ T^\circ(p) = \sum\limits_{p\text{-rejectable } \pi \in \SN_N}  R_{J(\pi)}(x) = \sum_{J=1}^{I+1} \tau^\circ_J(k, j) R_J(x) \]
where $k$ is the size of $p$ and $j$ is $J(p)$.  Observe that because the
recurrence~(\ref{e:tc}) is linear, it applies to the coefficients
$\tau^\circ_J$ as well.  

\begin{lemma}\label{l:basictau}
We have that $\tau^\circ_J(k,j)$ satisfies the recurrence
\[ \tau^\circ_J(k, j) =  B(k,j) + (k-j+1) \tau^\circ_J(k+1, j) + \sum_{i=1}^{j-1} \tau^\circ_J(k+1, i) \]
for all $k < N$, with the initial condition that $\tau^\circ_J(k,j)$ is the zero vector when $k = N$, and adopting the convention that $j = k$ when $k \leq I$ (recalling that the $J$-statistic of the prefix is technically undefined in this case).
Here, the $B(k,j)$ is an ``affine'' term with two cases:  
\[ B(k, j) = \begin{cases} \sigma_J(k+1, j) & \text{ for all $k \neq I$, } \\ \sigma_J(k+1, k+1) & \text{ if $k = I$. } \end{cases} \]
\end{lemma}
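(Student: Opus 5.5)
The plan is to take the linear recurrence \eqref{e:tc} for $T^\circ$ and read off coefficients of each $R_J(x)$. This is legitimate because \eqref{e:tc} is linear and every term is a sum of $R_{J(\pi)}(x)$ over sets of permutations. Equivalently, I will show combinatorially that the set of $p$-rejectable $\pi$ with $J(\pi)=J$ splits as a disjoint union indexed by the immediate children of $p$. I fix a prefix $p$ of size $k<N$ with $J(p)=j$, adopting the convention $j=k$ when $k\le I$. Each $p$-rejectable $\pi$ has a unique prefix flattening $\pf{k+1}$, which is one of the $k+1$ immediate children $q$ of $p$. If $q=\bar p$ ends in a left-to-right maximum, then under the ``accept next best-so-far'' rule the player stops at position $k+1$. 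So $\pi$ is $p$-rejectable exactly when it is $\bar p$-acceptable, and these contribute $S(\bar p)$, whose $J$-coefficient is $\sigma_J(k+1, J(\bar p))$. Otherwise $\pi$ is $p$-rejectable if and only if it is $q$-rejectable. The whole task therefore reduces to computing the $J$-statistic of each immediate child $q$ in terms of $k$, $j$, and the last value of $q$, and then invoking Lemma~\ref{l:nop2} so that $T^\circ(q)$ depends only on $(k+1,J(q))$.

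Next I carry out the case analysis of $J(q)$. First suppose $k\ge I+1$. Then $J(p)=j$ means the values $1,\dots,j-1$ sit in the first $I$ positions and $j$ sits in some position after $I$. Let the new entry of $q$ have value $v\in\{1,\dots,k+1\}$, measured in the flattening. If $v\le j-1$, the new value becomes a bottom-to-top maximum after position $I$. It is smaller than the old $j$, all smaller values lie left of $I$, and the previous $j$ is pushed upward, so $J(q)=v$. This gives the $\sum_{i=1}^{j-1}\tau^\circ_J(k+1,i)$ term, with $i=v$. If $v\ge j$, then the values $1,\dots,j-1$ are unchanged and still lie left of $I$, while the shifted old $j$ (still value $j$ when $v>j$) or the new entry (when $v=j$) sits after $I$. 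Either way $J(q)=j$. There are $k+1-(j-1)=k-j+2$ such $v$, and one of them is $v=k+1$, the child $\bar p$. The remaining $k-j+1$ children give $(k-j+1)\tau^\circ_J(k+1,j)$, and $\bar p$ gives $B(k,j)=\sigma_J(k+1,j)$.

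Now suppose $k<I$. The prefix lies entirely within the first $I$ positions, so its $J$-statistic is only conventionally $k$. Every child again has size at most $I$, so it carries conventional statistic $k+1$. The recurrence must then be checked against this convention: the term $\sum_{i=1}^{j-1}$ with $j=k$ combined with $(k-j+1)=1$ has to reproduce the correct count. I expect this bookkeeping to be the main obstacle. I would verify it by noting that by Lemma~\ref{l:nop2} and Theorem~\ref{t:main_mu} the quantities $\tau^\circ_J(k+1,\cdot)$ and $\sigma_J(k+1,\cdot)$ for $k+1\le I$ do not depend on the second argument. Then the sum $(k-j+1)\tau^\circ_J(k+1,j)+\sum_{i<j}\tau^\circ_J(k+1,i)$ equals $k\,\tau^\circ_J(k+1,\cdot)$ for the $k$ non-maximal children, and $\bar p$ contributes $\sigma_J(k+1,\cdot)$.

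The last case is $k=I$. Here a child $q$ of size $I+1$ has a genuine $J$-statistic equal to the value of its last entry, or $I+1$ if the last entry is the maximum. So $\bar p$ has $J(\bar p)=I+1=k+1$, giving $B(I,j)=\sigma_J(k+1,k+1)$. The child with last value $v\le k$ has $J(q)=v$. With $j=k$ this yields $\tau^\circ_J(k+1,k)$ once, plus $\sum_{i=1}^{k-1}\tau^\circ_J(k+1,i)$, matching the formula. Finally, the initial condition holds because no permutation can be rejected at size $N$.
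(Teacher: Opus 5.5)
Your proposal is correct and follows essentially the same route as the paper. Both arguments decompose the $p$-rejectable permutations by which immediate child of $p$ is their $(k+1)$st prefix flattening, with $\bar p$ contributing the $\sigma_J$ term, and both track the $J$-statistic of each child separately for $k>I$, $k=I$, and $k<I$, using independence from the second argument when $k+1\le I$; your explicit check that passing through $\bar p$ means exactly $\bar p$-acceptable, and through any other child $q$ exactly $q$-rejectable, spells out a step the paper leaves implicit.
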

\begin{proof}
We consider a refinement of Equation~(\ref{e:tc}) in which we track how the $J$-statistic changes. 
Fix $N$ and $I$.  Then, we claim that
    \[ \tau^\circ_J(k, j) = \begin{cases}
            \sigma_J(k+1) + k \ \tau^\circ_J(k+1) & \text{ if $k < I$ } \\
            \sigma_J(I+1, I+1) + \sum_{i=1}^{k} \tau^\circ_J(I+1, i) & \text{ if $k = I$ } \\
            \sigma_J(k+1, j) + (k-j+1) \tau^\circ_J(k+1, j) + \sum_{i=1}^{j-1} \tau^\circ_J(k+1, i) & \text{ if $I < k < N$ } \\
            0 & \text{ if $k = N$ }
    \end{cases} \]
for $J = 1, 2, \ldots, I+1$.

The case where $k = N$ is clear, as there is no way to win the game if we
reject the last candidate.

Assume $N > k > I$ and suppose that $p$ is a prefix of size $k$ and
$J$-statistic $j$.  Recall Definition~\ref{d:sint}.  Since $J(p) = j$, we have
$J(\bar{p}) = j$ as well.  Hence, the $p$-rejectable $\pi$ that are
$\bar{p}$-acceptable contribute $\sigma_J(k+1, j)$ to $\tau^\circ_J(k, j)$.

Now suppose that $q$ is an immediate child of $p$ that is not $\bar{p}$.  These
are distinguished by the value in the last position by Definition~\ref{d:sint}.
For $i = 1, 2, \ldots, k$, let $q_{(i)}$ denote the immediate child of $p$
whose value in the last position of $q$ is $i$.  Since $k > I$, placing the
value $i = 1, 2, \ldots, j-1$ at the end of the prefix will ``reset'' the
$J$-statistic, so the children $q_{(i)}$ have $J(q_{(i)}) = i$ for $1 \leq i
\leq j-1$.  These contribute $\sum_{i=1}^{j-1} \tau^\circ_J(k+1, i)$ to
$\tau^\circ_J(k, j)$.

On the other hand, placing the value $i = j, j+1, \ldots, k$ at the end of the
prefix preserves the $J$-statistic, equal to $j$, so the children $q_{(i)}$
have $J(q_{(i)}) = j$ for $j \leq i \leq k$.  These contribute $(k-(j-1))
\tau^\circ_J(k+1, j)$ to $\tau^\circ_J(k, j)$.  This completes the proof of the
formula when $N > k > I$.

Finally, suppose $k \leq I$ and suppose that $p$ is a prefix of size $k$.  The
$p$-rejectable $\pi$ that are $\bar{p}$-acceptable contribute $\sigma_J(k+1,
k+1)$ to $\tau^\circ_J(k, j)$.

There are still $k$ other immediate children of $p$, and placing the value $i =
1, 2, \ldots, j-1$ at the end of the prefix will ``reset'' the $J$-statistic if
$k = I$, so the children $q_{(i)}$ have $J(q_{(i)}) = i$ for $1 \leq i \leq k$
in this case.  If $k < I$, the $J$-statistic of $p$ (or any of its immediate
children) is not defined, but there is no harm in writing the contribution in
the overly-specific form $\sum_{i=1}^{j-1} \tau^\circ_J(k+1, i)$.  Thus, we
obtain the formula for the case $k \leq I$, completing the proof of our
piecewise claim.

The original ``affine'' term from our piecewise formula is 
\[ B(k, j) = \begin{cases} \sigma_J(k+1, j) & \text{if $I < k$} \\ \sigma_J(k+1, k+1) & \text{ if $k \leq I$ } \end{cases} \]
but in fact, $B(k, j)$ can be taken to be $\sigma_J(k+1, j)$ for {\em all} $k$
except the single case when $k = I$ (since $\sigma_J$ doesn't depend on $j$
when $k < I$), in which case we need $A(I, j) = \sigma_J(I+1, I+1)$.
\end{proof}

%%%%%%%%%%%%%%%%%%%%%%%%%%%%%%%%%%%%%%%%%%%%%%%%%%%%%%%%%%%%%%%%%%%%%
%  Subsection
%%%%%%%%%%%%%%%%%%%%%%%%%%%%%%%%%%%%%%%%%%%%%%%%%%%%%%%%%%%%%%%%%%%%%
\bigskip
\subsection{The folding recurrence}\label{s:fold}

In this section, we generalize Theorem~\ref{t:tos} to understand how the $J$-statistic changes under arbitrary permutations of prefix entries.  We will find that all of the $\tau^\circ_J(k, j)$ vectors can already be computed directly from the single vector $\tau^\circ_J(k, I+1)$.

\begin{definition}
Let $q$ and $r$ be any prefixes having the same size $k$ (but not necessarily having the same $J$-statistic).  (Re-)define the function
\[ \f_{q,r}: \{ \text{$q$-prefixed } \pi \in \SN_N \} \rightarrow \{ \text{$r$-prefixed } \pi \in \SN_N\} \] 
by rearranging the first $k$ entries of $\pi$ so that they have the relative order given by $r$ in $\f_{q,r}(\pi)$.
In particular, $\f_{q,r}$ does not change the values nor positions of the last $N-k$ entries of $\pi$ at all.
\end{definition}

It turns out that this generalized $\f_{q,r}$ does not always preserve the $J$-statistic.  In order to track how the $J$-statistic changes under this function, we observe the following.

\begin{lemma}\label{l:pj}
Suppose that $q$ is a prefix of $\pi \in \SN_N$ and let $k$ denote the size of $q$.  If $I < k$ then $J(\pi) \leq J(q)$.  In particular, if $J(\pi) < J(q)$ then the value $J(\pi)$ lies to the right of position $k$ in $\pi$.  (If $I \geq k$ then the prefix imposes no constraints on $J(\pi)$ and we just have $J(\pi) \leq I+1$.)
\end{lemma}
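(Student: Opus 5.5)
The plan is to work directly from the definition of the $J$-statistic as the value of the first bottom-to-top maximum occurring after position $I$, together with the characterization extracted in the proof of Lemma~\ref{l:mj}. That characterization says $J(\pi)=J$ exactly when the values $1,2,\ldots,J-1$ all occupy positions $\le I$ and the value $J$ occupies a position $>I$. In other words, $J(\pi)$ is the smallest value whose position in $\pi$ exceeds $I$. I will first record this reformulation: $J(\pi)=\min\{v : \pi^{-1}(v)>I\}$. It holds because the smallest value lying beyond position $I$ has all smaller values to its left, so it is a bottom-to-top maximum after $I$. Conversely, any bottom-to-top maximum after $I$ is preceded by all smaller values, so no smaller value lies beyond $I$. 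The same description applies to the prefix $q$, viewed as a permutation of size $k>I$.

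Next, assume $I<k$ and set $j=J(q)$. By the reformulation, the values $1,\ldots,j-1$ of $q$ sit in positions $\le I$ of $q$, and $j$ sits in a position in $(I,k]$. Now compare with $\pi$. The flattening from $\pi_1\cdots\pi_k$ to $q$ is order preserving, so the entry of $\pi$ at the position where $q$ has value $j$ is larger than the $j-1$ entries of $\pi$ sitting where $q$ has $1,\ldots,j-1$. Hence among positions $\le I$ of $\pi$ there are at most $j-1$ values below it. Let $v$ be that entry of $\pi$, at some position $>I$. Then the minimum value of $\pi$ beyond position $I$ is at most $v$.

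The main point, and the step needing care, is that $v$ itself need not be $\le j$. So instead I argue directly about the minimum. Let $u=J(\pi)$, the smallest value of $\pi$ beyond position $I$. All of $1,\ldots,u-1$ are in positions $\le I<k$, so they lie in the prefix and flatten to themselves in $q$. In $q$ they therefore also sit in positions $\le I$. Consequently $J(q)$, being the smallest value of $q$ beyond $I$, is at least $u$, which gives $J(\pi)\le J(q)$.

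For the "in particular" claim, suppose $J(\pi)=u<J(q)$. If $u$ were in positions $\le k$ (necessarily beyond $I$), then $1,\ldots,u$ would all lie in the prefix and flatten to themselves. Then $q$ would have value $u$ beyond position $I$, forcing $J(q)\le u$, a contradiction. So $u$ lies to the right of position $k$. Finally, when $I\ge k$, I will just remark that the prefix does not determine which values occupy positions $I+1,\ldots$, and the bound $J(\pi)\le I+1$ follows because only $I$ values fit in the first $I$ positions.
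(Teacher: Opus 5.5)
Your argument is correct and is essentially the paper's proof. The values $1,\ldots,J(\pi)-1$ sit in positions $\le I<k$ and form an initial segment, so they flatten to themselves in $q$ and still occupy positions $\le I$ there, which forces $J(q)\ge J(\pi)$. Applying the same observation to $1,\ldots,J(\pi)$ gives the ``in particular'' claim; you argue directly through $J(\pi)=\min\{v:\pi^{-1}(v)>I\}$, whereas the paper argues by contradiction.

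One wording fix: in justifying that reformulation, only the \emph{first} bottom-to-top maximum after position $I$ has no smaller value beyond $I$. As written, ``any'' is false, for example when $1$ and $2$ occupy positions $I+1$ and $I+2$. The abandoned detour about $v$ in your second paragraph can simply be deleted.
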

\begin{proof}
Suppose $I < k$.  From the definition of the $J$-statistic, we have that the
values $1, 2, \ldots, J(\pi)-1$ all lie to the left of $I$, hence to the left
of $k$, in $\pi$.  Since these values form an interval of integers from $1$
with no gaps, the roles they play in $\pi$ are the same as the values they have
in any prefix $q$.  That is, the value of each of these entries in $\pi$ is the
same as the value of the corresponding entry in $q$ (that is obtained from the
prefix flattening of $\pi$).

Now suppose that the first claim is false.  Then the observations in the
previous paragraph would apply to the value $J(q)$ since $J(q) < J(\pi)$.  But
this contradicts that $J(q)$ is the first bottom-to-top maximum {\em
after} position $I$ in $q$, by definition.

Next, suppose $J(\pi) < J(q)$.  Then the observations in the first paragraph
imply that each of the values $1, 2, \ldots, J(\pi)-1$ play the same role in
$q$ as they do in $\pi$.  So $J(\pi)$ lies to the right of position $I$ and if
$J(\pi)$ existed among the first $k$ positions of $\pi$, then it would also be
the first bottom-to-top maximum after position $I$ for $q$, being the next
highest value.  But this contradicts that $J(q) \neq J(\pi)$.  Thus, $J(\pi)$
must lie to the right of position $k$ in $\pi$ in this case.
\end{proof}

\begin{theorem}\label{t:sqr}
Suppose $q$ is a prefix for some permutation $\pi$ and $r$ is another prefix of the same size $k$ with $J(q) \geq J(r)$.  Then we have 
    \[
        J(\f_{q,r}(\pi)) = \begin{cases}
            J(\pi) & \text{ if $I \geq k$ or ($I < k$ and $J(\pi) \leq J(r)$) } \\
            J(r) & \text{ if $I < k$ and $J(\pi) > J(r)$ }
        \end{cases}
    \]
\end{theorem}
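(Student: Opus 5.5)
Write $\rho = \f_{q,r}(\pi)$. By construction $\rho$ agrees with $\pi$ in positions $k+1,\dots,N$, and its first $k$ entries carry the same set of values as those of $\pi$, now arranged in the relative order $r$. Because of this, the $J$-statistic of $\rho$ can be read off from three pieces of data: the prefix $r$, the set of values occupying the first $k$ positions, and the common suffix. I will split into cases according to whether $I \geq k$ or $I < k$.

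\emph{Case $I \geq k$.} In the proof of Lemma~\ref{l:mj} we saw that $J(\pi)=J$ holds exactly when the values $1,\dots,J-1$ lie in positions $\le I$ and $J$ lies in a position $> I$; the statement in that proof is an equivalence. Both conditions depend only on which values sit in positions $\le I$. Since $k \le I$, the map $\f_{q,r}$ permutes values only inside positions $\le k \le I$, so the set of values in positions $\le I$ is unchanged. Hence $J(\rho)=J(\pi)$.

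\emph{Case $I<k$.} Here I will use Lemma~\ref{l:pj} together with the flattening observation from its proof. If a set of values $\{1,\dots,m\}$ all lie in the first $k$ positions of a permutation, then they carry the same values in its $k$th prefix flattening. Apply this to $\rho$, whose $k$th prefix flattening is $r$. By Lemma~\ref{l:pj}, $J(\rho)\le J(r)$, and $J(\pi)\le J(q)$. Moreover, the values $1,\dots,J(r)-1$ occupy exactly the positions $\le I$ in which they appear in $r$, and $J(r)$ appears in the same position $>I$ (but $\le k$) as in $r$. Unflattening preserves these values, because they form an initial interval and all of them lie in the prefix: the value set of the prefix of $\rho$ equals the value set of the prefix of $\pi$, and this set contains $1,\dots,J(\pi)-1$. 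To justify that this set contains $1,\dots,J(r)$, I will use the fact that $J(\pi)-1$ and all smaller values lie in positions $\le I<k$ of $\pi$. Then I compare $J(r)$ with $J(\pi)$.

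\emph{Subcase $J(\pi)>J(r)$.} The values $1,\dots,J(\pi)-1$, and in particular $1,\dots,J(r)$, lie in the prefix of $\pi$, hence in the prefix of $\rho$. So the smallest values of the prefix of $\rho$ are literally $1,\dots,J(r)$, and they are placed according to $r$: the values $1,\dots,J(r)-1$ sit in positions $\le I$ and $J(r)$ sits in a position $>I$. Therefore $J(\rho)=J(r)$.

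\emph{Subcase $J(\pi)\le J(r)$.} Here is where the hypothesis $J(q)\ge J(r)$ enters. If $J(\pi)=J(q)$, then $J(\pi)$ lies in the prefix and $J(q)\le J(r)$, so $J(\pi)=J(q)=J(r)$. The values $1,\dots,J(r)$ then lie in the prefix, and the previous argument gives $J(\rho)=J(r)=J(\pi)$. Otherwise $J(\pi)<J(q)$, and Lemma~\ref{l:pj} places $J(\pi)$ to the right of position $k$, so its position is the same in $\rho$. The values $1,\dots,J(\pi)-1$ lie in the prefix, and in $\rho$ they occupy the positions that $r$ assigns to them. Since $J(\pi)-1<J(r)$, those positions are $\le I$. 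Hence in $\rho$ the values below $J(\pi)$ all sit in positions $\le I$ and $J(\pi)$ sits in a position $>k>I$, which gives $J(\rho)=J(\pi)$.

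\emph{Main obstacle.} The delicate point is making the claim ``these values lie in the prefix with unchanged numerical values'' rigorous in each subcase. This requires checking carefully that the relevant initial interval of values is wholly contained in positions $\le k$ in both $\pi$ and $\rho$. The boundary situation $J(\pi)=J(r)$ with $J(\pi)$ lying in the prefix also needs care, since that is where the hypothesis $J(q)\ge J(r)$ is used.
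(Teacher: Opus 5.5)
Your overall route matches the paper's: the case $I\ge k$ is handled via the set of values in positions $\le I$, and the case $I<k$ splits into $J(\pi)>J(r)$ and $J(\pi)\le J(r)$. The case $I\ge k$, the subcase $J(\pi)>J(r)$, and your ``Otherwise'' branch are all correct.

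The branch that begins ``If $J(\pi)=J(q)$, then $J(\pi)$ lies in the prefix'' rests on a false implication. Lemma~\ref{l:pj} says only that $J(\pi)<J(q)$ forces the value $J(\pi)$ beyond position $k$; equality does not force it inside. Take $N=4$, $I=1$, $k=3$, $\pi=1342$ (so $q=123$) and $r=132$. Then $J(\pi)=J(q)=J(r)=2$, yet the value $2$ sits in position $4$ and the prefix value set is $\{1,3,4\}$. So ``the values $1,\dots,J(r)$ then lie in the prefix'' is false, and the ``previous argument'' misplaces values: in $\rho=1432$, the position where $r$ has its value $2$ holds $3$. The conclusion $J(\rho)=J(\pi)$ happens to be true here, but your argument does not establish it. The ``Moreover \dots'' sentence at the start of your case $I<k$ has the same defect. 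It is valid only once $\{1,\dots,J(r)\}$ is known to lie in the first $k$ positions.

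The repair is to split on the \emph{location} of the value $J(\pi)$ rather than on whether $J(\pi)=J(q)$; this is exactly the paper's split.
\begin{itemize}
\item If $J(\pi)$ lies beyond position $k$, your ``Otherwise'' argument goes through verbatim. It uses only that $J(\pi)$ is unmoved, that $1,\dots,J(\pi)-1$ are literal prefix values, and that $J(\pi)-1<J(r)$.
\item If $J(\pi)$ lies among the first $k$ positions, then $1,\dots,J(\pi)$ are all literal prefix values, so $J(q)=J(\pi)$. Combined with $J(\pi)\le J(r)\le J(q)$, this gives $J(\pi)=J(r)$ with $\{1,\dots,J(r)\}$ contained in the prefix, and your first argument applies.
\end{itemize}
The valid implication runs from ``$J(\pi)$ lies inside the prefix'' to ``$J(q)=J(\pi)$'', not the other way around.
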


\begin{remark}
In particular, if $J(q) = J(r)$ then the second case never occurs by Lemma~\ref{l:pj}, so $\f_{q,r}$ always preserves the $J$-statistic in this case (cf. Theorem~\ref{t:tos}).
\end{remark}

\begin{remark}
Equivalently, we can write this formula as $J(\f_{q,r}(\pi)) = \min\left( J(\pi), J(r) \right)$ when $k > I$.
\end{remark}

\begin{proof}
Observe that the only ways for $\f_{q,r}$ to change the $J$-statistic are to move one of the values $1, 2, \ldots, J(\pi)-1$ to the right of position $I$, which lowers the $J$-statistic, or else to move $J(\pi)$ to the left of position $I$, which raises the $J$-statistic.  So if $I \geq k$ then $\f_{q,r}$ can have no effect on the $J$-statistic.  Hence, we may assume $I < k$.

Suppose first that $J(r) < J(\pi)$.  Using the same observations as in the first paragraph of the proof of Lemma~\ref{l:pj}, we find that the values $1, 2, \ldots, J(r)$ all lie to the left of position $k$ in $\pi$ and that each of these entries has the same value in $\f_{q,r}(\pi)$ as it does in $r$.  Hence, $J(\f_{q,r}(\pi)) = J(r)$ in this case.

Next, suppose $J(\pi) \leq J(r)$ and suppose for the sake of contradiction that $\f_{q,r}$ changes the $J$-statistic.  By the first sentence of this proof, there are two ways this can occur.  Using the observations in the first paragraph of the proof of Lemma~\ref{l:pj}, we have that the values $1, 2, \ldots, J(\pi)-1$ all lie to the left of position $I$ in $r$ because $J(r) \geq J(\pi)$.  Hence, none of these values move past position $I$ as we apply $\f_{q,r}$ to $\pi$.  

Therefore, we must have that $J(\pi)$ moves from the right side of position $I$ in $\pi$ to the left side of position $I$ in $\f_{q,r}(\pi)$.  If $J(\pi)$ lies to the right of position $k$, then $\f_{q,r}$ cannot move $J(\pi)$ as $\f_{q,r}$ only affects the entries in the first $k$ positions.  So $J(\pi)$ must lie among the first $k$ positions of $\pi$.  Therefore, the entries $1, 2, \ldots, J(\pi)$ form an interval of integers from $1$ with no gaps.  So each of these entries has the same value in $\pi$ and $\f_{q,r}(\pi)$ as the value of the corresponding entries of $q$ and $r$, respectively, that are obtained from the prefix flattenings.  In particular, $J(q)$ must be equal to $J(\pi)$.

Since moving $J(\pi)$ from the right side of position $I$ to the left side of position $I$ will {\em increase} the $J$-statistic, we violate our assumption that $J(q) \geq J(r)$.  Thus, in each case we obtain a contradiction to the hypothesis that $\f_{q,r}$ changes the $J$-statistic.
\end{proof}

\begin{corollary}\label{c:folding}
Suppose $k > I$ is fixed and $f$ represents $\sigma$ or $\tau^\circ$.  Then we have the ``folding'' recurrence
\[ f_J(k, j) = \begin{cases}
        f_J(k, j+1) & \text{ for all $1 \leq J < j$ } \\
        f_J(k, j+1) + f_{J+1}(k, j+1) & \text{ for $J = j$. } \\
        0 & \text{ for $J > j$. } \\
    \end{cases} \]
\end{corollary}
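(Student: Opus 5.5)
We prove the folding recurrence by a bijection, using the map $\f_{q,r}$ and Theorem~\ref{t:sqr}. Fix $k > I$ and $1 \leq j \leq I$, so that $j+1 \leq I+1$ is an admissible $J$-statistic for a prefix of size $k$. Choose a prefix $q$ of size $k$ with $J(q) = j+1$ and a prefix $r$ of size $k$ with $J(r) = j$. For $f = \sigma$ take the sets of $q$-acceptable and $r$-acceptable $\pi$. For $f = \tau^\circ$ take the $q$-rejectable and $r$-rejectable $\pi$. For either choice of $f$, $\f_{q,r}$ is a bijection between the two sets. It is invertible via $\f_{r,q}$, and it leaves the last $N-k$ entries unchanged. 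Whether $N$ sits in position $k$ (acceptability), or whether there are no left-to-right maxima strictly between position $k$ and the position of $N$ (rejectability), depends only on the suffix. It also depends on whether the prefix itself ends in its maximum, which we may assume holds for both $q$ and $r$, or fails for both. So the two sets correspond exactly.

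By Lemma~\ref{l:nop2} and the well-definedness remarks, $f_J(k,j)$ counts the $\pi$ in the $r$-set with $J(\pi)=J$, and $f_J(k,j+1)$ counts those in the $q$-set. Since $J(q) = j+1 \geq j = J(r)$, Theorem~\ref{t:sqr} applies, and in the form of its second remark it gives $J(\f_{q,r}(\pi)) = \min(J(\pi), j)$. By Lemma~\ref{l:pj}, every $\pi$ in the $q$-set has $J(\pi) \leq j+1$, and every $\pi$ in the $r$-set has $J(\pi)\le j$. This immediately gives the case $J>j$: $f_J(k,j)=0$.

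For $J < j$, the preimages of $\{\pi' : J(\pi') = J\}$ are exactly the $\pi$ with $\min(J(\pi),j)=J$, which forces $J(\pi)=J$. Hence $f_J(k,j) = f_J(k,j+1)$. For $J = j$, the preimages are the $\pi$ with $J(\pi) \in \{j, j+1\}$, giving $f_j(k,j) = f_j(k,j+1) + f_{j+1}(k,j+1)$.

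The main obstacle is confirming that $\f_{q,r}$ really preserves the rejectable and acceptable properties, so that the bijection respects the sets being counted. It is also necessary that prefixes with the required $J$-statistics exist, and that both can be chosen to end in a left-to-right maximum. For the latter, take $q$ to have $1,\dots,j$ in its first $I$ positions, $j+1$ after position $I$, and $k$ last. Obtain $r$ by moving $j$ after position $I$ instead, still ending in $k$. When $j = k$ (possible only if $I \geq k$, which is excluded) these degenerate cases do not arise. As a sanity check, the recurrence can be compared against the explicit formulas of Theorem~\ref{t:main_mu} for $\sigma$, where the $J=j$ case reduces to Pascal-type identities for the third formula.
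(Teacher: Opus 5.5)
Your argument is essentially the paper's own proof. You take $q$ with $J(q)=j+1$, let $r$ swap the values $j$ and $j+1$, and read off the folding from Theorem~\ref{t:sqr} in the form $J(\varphi_{q,r}(\pi))=\min(J(\pi),j)$, together with Lemma~\ref{l:pj}. Your preimage bookkeeping for the cases $J<j$ and $J=j$ is more explicit than the paper's.

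\textbf{The flaw: the case $k=I+1$.} Your claim that the degenerate cases do not arise is wrong; the problematic case is $k=I+1$, not $j=k$.
\begin{itemize}
\item When $k=I+1$, position $k$ is the only position of the prefix after $I$. So a prefix with $J$-statistic $j\le I$ must end in the value $j$, and is never winnable.
\item On the other hand, $J(q)=I+1$ forces the first $I$ positions of $q$ to hold $\{1,\dots,I\}$, so $q$ ends in $k$.
\item Hence your instruction ``still ending in $k$'' cannot be carried out. For $j=I$, the map $\varphi_{q,r}$ sends every $q$-acceptable permutation to a non-acceptable one.
\end{itemize}
Read literally, $\sigma_J(I+1,I)=0$ for every $J$, since there is no winnable prefix to be acceptable for. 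But
\[ \sigma_I(I+1,I+1)+\sigma_{I+1}(I+1,I+1)=(N-I-1)(N-I-1)!+(N-I-1)!=(N-I)!>0. \]
So for literal counts the $J=j$ case of the statement is false at $k=I+1$, $j=I$. It holds only for the formal values given by the formulas of Theorem~\ref{t:main_mu}. That is how the paper treats these entries; see the row $k=4$, $j=3$ of Table~\ref{ta:e} for $N=8$, $I=3$. The paper's proof has the same hole: its remark that the two swapped values ``do not involve $N$'' fails exactly when $j+1=k$.

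\textbf{How to repair it.}
\begin{itemize}
\item For $f=\tau^\circ$, drop your requirement that $q$ and $r$ both end, or both fail to end, in their maximum. Rejectability depends only on the set of the first $k$ values and on the suffix. So $\varphi_{q,r}$ matches $q$-rejectable with $r$-rejectable permutations for any $q,r$ of size $k$, including at $k=I+1$.
\item For $f=\sigma$, run your bijection only when $k\ge I+2$. Then $j+1\le I+1<k$ leaves room for $j+1$ strictly between positions $I$ and $k$, and both $q$ and $r$ can end in $k$.
\item At $k=I+1$, take the formulas of Theorem~\ref{t:main_mu} as the definition of $\sigma_J(I+1,j)$ for $j\le I$, and verify the recurrence algebraically. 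The cases $J<j$ and $J>j$ are immediate from those formulas. The case $J=j$ is
\[ \frac{(N-k)(N-j-1)!}{(k-j)!}+\frac{(N-j-1)!}{(k-j-1)!}=\frac{(N-j-1)!\,\bigl((N-k)+(k-j)\bigr)}{(k-j)!}=\frac{(N-j)!}{(k-j)!}. \]
\end{itemize}
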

\begin{proof}
Whenever $J > j$, we have $f_J(k, j) = 0$ by Lemma~\ref{l:pj}.  

Given a prefix $q$ with size $k$ and $J$-statistic $j+1$, we can consider a rearrangement $r$ of $q$ that interchanges the positions of values $j$ and $j+1$.  Then $J(r) = j$.  The function $\f_{q,r}$ does not change acceptability or rejectability of any $\pi \in \SN_N$ as we only interchange two adjacent values that do not involve $N$.  Invoking Theorem~\ref{t:sqr}, we find that the $J$-statistics are mostly unchanged by $\f_{q,r}$ except that all of the $\pi$ having $J$-statistic $j+1$ will ``fold'' into the collection of $\pi$ having $J$-statistic $j$ in the image of $\f_{q,r}$.  Recalling the interpretations of $\sigma_J(k, j)$ and $\tau^\circ(k, j)$ as the number of acceptable (rejectable, respectively) $\pi \in \SN_N$ with $J(\pi) = J$, we obtain the result.
\end{proof}

\begin{remark}\label{r:fold}
Iterating Corollary~\ref{c:folding} allows us to compute all of the $f_J(k, j)$ values directly from $f_J(k, I+1)$.  It also implies that the $f_J(k, j)$ values are completely determined by the ``diagonal'' terms $f_J(k, J)$.  To see this, suppose $J$ is fixed and $j > J$ is arbitrary.  Then we can replace $j$ by $J+1$:  i.e. $f_J(k, j) = f_J(k, J+1)$ by the first part of the Corollary.  Then, the second part of the Corollary implies $f_{J}(k, J) = f_{J}(k, J+1) + f_{J+1}(k, J+1)$, so
\[ f_J(k,j) = \begin{cases} f_J(k,J) - f_{J+1}(k,J+1)  & \text{ if $J < j$, } \\ 0 & \text{ if $J > j$ by Lemma~\ref{l:pj}. } \end{cases} \]
\end{remark}

\begin{example}
To illustrate the results so far, we have computed the $\sigma_J(k, j)$ and $\tau^\circ_J(k, j)$ for $N = 8$ and $I = 3$ in Table~\ref{ta:e}.  By dotting these vectors with 
\[ \scalebox{0.85}{ $R_J(x) = \left[ -\frac{1}{40320}(x - 1)^3, \frac{1}{40320}(7x + 1)(x - 1)^2, -\frac{1}{40320}(21x^2 + 6x + 1)(x - 1), \frac{1}{1152}x^3 + \frac{1}{2688}x^2 + \frac{1}{8064}x + \frac{1}{40320} \right]$ } \]
we can recover the $S(p)$ and $T^\circ(p)$ payoffs for each prefix $p$ having size $k$ and $J$-statistic $j$.
\end{example}

\begin{table}[ht]
\[
\begin{array}{|c c|l|l|}
    \hline
    k & j & \sigma_J(k, j) & \tau^\circ_J(k, j) \\
% N = 8, I = 3
%-- prefix length  1  --
    \hline
1  &  1  &  3600, 1200, 240, 0  &  8820, 3340, 860, 48  \\
%-- prefix length  2  --
    \hline
2  &  2  &  3600, 1200, 240, 0  &  5220, 2140, 620, 48  \\
%-- prefix length  3  --
    \hline
3  &  3  &  1800, 600, 120, 0  &  1710, 770, 250, 24  \\
%-- prefix length  4  --
    \hline
4  &  4  &  480, 240, 96, 24  &  296, 188, 104, 50  \\
4  &  3  &  480, 240, 120, 0  &  296, 188, 154, 0  \\
4  &  2  &  480, 360, 0, 0  &  296, 342, 0, 0  \\
4  &  1  &  840, 0, 0, 0  &  638, 0, 0, 0  \\
%-- prefix length  5  --
    \hline
5  &  4  &  90, 60, 36, 24  &  33, 27, 21, 26  \\
5  &  3  &  90, 60, 60, 0  &  33, 27, 47, 0  \\
5  &  2  &  90, 120, 0, 0  &  33, 74, 0, 0  \\
5  &  1  &  210, 0, 0, 0  &  107, 0, 0, 0  \\
%-- prefix length  6  --
    \hline
\end{array}
\ \ \ \ 
\begin{array}{|c c|l|l|}
    \hline
    k & j & \sigma_J(k, j) & \tau^\circ_J(k, j) \\
    \hline
6  &  4  &  12, 10, 8, 12  &  2, 2, 2, 7  \\
6  &  3  &  12, 10, 20, 0  &  2, 2, 9, 0  \\
6  &  2  &  12, 30, 0, 0  &  2, 11, 0, 0  \\
6  &  1  &  42, 0, 0, 0  &  13, 0, 0, 0  \\
%-- prefix length  7  --
    \hline
7  &  4  &  1, 1, 1, 4  &  0, 0, 0, 1  \\
7  &  3  &  1, 1, 5, 0  &  0, 0, 1, 0  \\
7  &  2  &  1, 6, 0, 0  &  0, 1, 0, 0  \\
7  &  1  &  7, 0, 0, 0  &  1, 0, 0, 0  \\
%-- prefix length  8  --
    \hline
8  &  4  &  0, 0, 0, 1  &  0, 0, 0, 0  \\
8  &  3  &  0, 0, 1, 0  &  0, 0, 0, 0  \\
8  &  2  &  0, 1, 0, 0  &  0, 0, 0, 0  \\
8  &  1  &  1, 0, 0, 0  &  0, 0, 0, 0  \\
    \hline
\end{array}
\]
\caption{The strategic vectors for $N = 8$, $I = 3$}\label{ta:e}
\end{table}

%%%%%%%%%%%%%%%%%%%%%%%%%%%%%%%%%%%%%%%%%%%%%%%%%%%%%%%%%%%%%%%%%%%%%
%  Subsection
%%%%%%%%%%%%%%%%%%%%%%%%%%%%%%%%%%%%%%%%%%%%%%%%%%%%%%%%%%%%%%%%%%%%%
\bigskip
\subsection{Convexity implies reduction from $T^\circ$ to $S^\circ$}\label{s:red}

Finally, we explain how our ersatz $T^\circ$ can be good enough to compute the
true optimal strategy for the game.  Recall the {\bf prefix tree} that was
described in the conceptual proof of Theorem~\ref{t:rnap}.

\begin{definition}
We say that a game of best choice is {\bf strategically convex} if along any
maximal path in the prefix tree, from the root $1$ to a permutation of size
$N$, there is a unique transition from negative prefixes to positive prefixes.
More specifically, if prefixes along the path are denoted $1 = p_{(1)},
p_{(2)}, \ldots, p_{(L)}$, then there exists a unique $k$ such that $p_{(i)}$
are negative for all $i \leq k$ and positive for all $i > k$.  (The $k$ is
allowed to depend on the path.  Also, we have that any prefix $p_{(L)}$ of size
$N$ is always positive by definition.) We refer to the collection of such
prefixes $p_{(k)}$ (over all maximal paths in the prefix tree) as the {\bf
strategic boundary} for the game.
\end{definition}

\begin{definition}\label{d:stratconv}
By replacing $S^\circ$ with $T^\circ$, we define ersatz versions for all of
our strategic terms.  For example, say that a prefix is {\bf
$T^\circ$-positive} if $T^\circ(p) \leq S(p)$ and {\bf $T^\circ$-negative}
otherwise.  Say that a game is {\bf $T^\circ$-convex} if there is a unique
transition from $T^\circ$-negative prefixes to $T^\circ$-positive prefixes
along any path in the prefix tree.
\end{definition}

\begin{theorem}\label{t:reduction}
We have the following statements relating ersatz properties with the real properties of the game:
\begin{enumerate}
    \item $T^\circ(p) \leq S^\circ(p)$ for all prefixes $p$.
    \item If $p$ is $T^\circ$-negative then $p$ is $S^\circ$-negative.
    \item If $p$ is $S^\circ$-negative and every descendant of $p$ is $S^\circ$-positive (so $p$ lies on the strategic boundary), then $T^\circ(q) = S^\circ(q)$ for $q = p$ as well as for all $q$ that are descendants of $p$.
    \item If the game is $T^\circ$-convex then it is strategically convex (with respect to $S^\circ$) as well.
\end{enumerate}
\end{theorem}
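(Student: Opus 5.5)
Part (1) goes by downward induction on the size $k$ of $p$. For $k = N$ both $S^\circ(p)$ and $T^\circ(p)$ are $0$ by the initial conditions. Now suppose $T^\circ(q) \leq S^\circ(q)$ for all prefixes $q$ of size $k+1$. Comparing the recurrence in Definition~\ref{d:sint} with (\ref{e:tc}) term by term gives
\[ T^\circ(p) = S(\bar p) + \sum_{q \neq \bar p} T^\circ(q) \leq \max\left(S(\bar p), S^\circ(\bar p)\right) + \sum_{q \neq \bar p} S^\circ(q) = S^\circ(p), \]
using $S(\bar p) \leq \max(\cdot,\cdot)$ on the first term and the induction hypothesis on the others. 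Part (2) then follows at once: if $S(p) < T^\circ(p)$, then $S(p) < S^\circ(p)$. (Definitions~\ref{d:posneg} and \ref{d:stratconv} use $\geq$ and $\leq$ consistently for positivity, so the strict inequalities line up.)

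Part (3) is again a downward induction, this time over the subtree rooted at $p$. By hypothesis every proper descendant $q$ of $p$ is $S^\circ$-positive, i.e.\ $S(q) \geq S^\circ(q)$. In particular, for every prefix $q$ that is $p$ itself or a descendant of size less than $N$, the child $\bar q$ is positive, so $\max(S(\bar q), S^\circ(\bar q)) = S(\bar q)$. The two recurrences then agree for all such $q$, with the same summands and the same initial conditions at size $N$. Induction on size within the subtree gives $T^\circ(q) = S^\circ(q)$, first for all descendants and finally for $p$ itself. The only care needed is to apply the hypothesis to $\bar q$, which is always a descendant, and never to $q$ itself when $q = p$.

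Part (4) is the main obstacle. I would argue by contradiction. Suppose some maximal path $p_{(1)}, \ldots, p_{(L)}$ is not strategically convex. Then it contains a positive prefix $p_{(a)}$ followed later by a negative prefix $p_{(b)}$ with $b > a$. Choose $b$ maximal, so that everything after $p_{(b)}$ on the path is positive. By $T^\circ$-convexity the path has a single switch point $k_T$, and part (2) says $T^\circ$-negative implies $S^\circ$-negative. So every $p_{(i)}$ with $i \leq k_T$ is $S^\circ$-negative, which forces $a > k_T$ and hence $b > k_T$. Therefore $p_{(b)}$ is $T^\circ$-positive but $S^\circ$-negative: $T^\circ(p_{(b)}) \leq S(p_{(b)}) < S^\circ(p_{(b)})$.

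To derive the contradiction I would apply part (3) to $p_{(b)}$, which requires its whole subtree, not only the path beyond it, to be $S^\circ$-positive. To arrange this, replace $p_{(b)}$ by a deepest $S^\circ$-negative prefix $p^*$ in the subtree of $p_{(a)}$. Every descendant of $p^*$ is then positive, so part (3) gives $T^\circ(p^*) = S^\circ(p^*) > S(p^*)$, and $p^*$ is $T^\circ$-negative. Now take a maximal path through $p_{(a)}$ and $p^*$. Along it, $p_{(a)}$ is $S^\circ$-positive and hence, by part (2), $T^\circ$-positive, while the later prefix $p^*$ is $T^\circ$-negative. This contradicts $T^\circ$-convexity. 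The delicate point is making sure the relevant prefixes end in left-to-right maxima, so that they actually lie on paths of the prefix tree; otherwise $S = 0$ and they are trivially negative. I would handle this by restricting, as in the proof of Theorem~\ref{t:rnap}, to the tree whose nodes are prefixes ending in left-to-right maxima.
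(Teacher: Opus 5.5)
Your proof is correct. Parts (1)--(3) follow the paper's argument: a downward induction comparing the two recurrences, where the $\max$ collapses to $S(\bar q)$ because every $\bar q$ below $p$ is positive.

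Part (4) uses the same mechanism as the paper: obtain a $T^\circ$-negative prefix from (3), then combine $T^\circ$-convexity with (2). The difference is which prefix you apply (3) to, and your choice is the sound one. The paper applies (3) to the last negative prefix along a single path. But (3) needs every descendant of that prefix to be $S^\circ$-positive, not only the descendants on that path. Taking $p^*$ to be a deepest $S^\circ$-negative node below $p_{(a)}$ is exactly what secures this hypothesis, so your version closes a step the paper leaves unjustified.

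Working in the tree of prefixes ending in left-to-right maxima, as in Theorem~\ref{t:rnap}, is also necessary. A non-winnable prefix of size less than $N$ has $S = 0 < S^\circ$, so it would otherwise violate the hypothesis of (3). In (3), make sure the induction still runs over all extensions of $p$, including non-winnable ones, while the positivity hypothesis is invoked only for the winnable children $\bar q$; your closing remark in (3) indicates exactly this.

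Finally, the step with the switch point $k_T$, and the observation that $p_{(b)}$ is $T^\circ$-positive, are never used in the contradiction and can be dropped.
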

\begin{proof}
    To verify the first statement, (reverse) induct on the size of the prefix (from size $N$ down to $1$).  Suppose that $T^\circ(q) \leq S^\circ(q)$ for all the children $q$ of $p$.  Then 
    \[ T^\circ(p) = S(\bar p) + \sum\limits_{\text{other immediate children $q$ of $p$}} T^\circ(q) \]
    \[ \leq \max\left( S(\bar p), S^\circ(\bar p) \right) + \sum\limits_{\text{other immediate children $q$ of $p$}} S^\circ(q) = S^\circ(p) \]
    proving the induction step.

    For the second statement, we have $T^\circ(p) > S(p)$ by hypothesis and $S^\circ(p) \geq T^\circ(p)$ by (1).

    The third statement looks complicated but follows directly from the recursive definitions, in which $\max(S(\bar{p}), S^\circ(\bar{p}))$ is always equal to $S(\bar{p})$ for this part of the tree.  It says that comparing $S$ with ersatz $T^\circ$ will accurately detect the true optimal boundary (that you would have found by comparing $S$ with $S^\circ$).

    For the last statement, suppose that $q$ is a prefix lying on the strategic boundary or lying on the last transition from negative to positive (if there are multiple transitions) and that $p$ is any other prefix of $q$.  By (3), we have that $q$ is $T^\circ$-negative.  By the $T^\circ$-convex hypothesis, we have that $p$ is $T^\circ$-negative.  By (2), we have that $p$ is $S^\circ$-negative.  Since $p$ was arbitrary, this shows that the game is $S^\circ$-convex.
\end{proof}

Thus, it suffices to prove that $T^\circ$ has a unique transition from negative
prefixes to positive prefixes, along any path in the prefix tree.  We will do
this by showing that $T^\circ$ is convex with respect to $j$ (see
Theorem~\ref{t:jconvex}) and $k$ (see Section~\ref{s:kconvex}) separately.

\begin{definition}\label{d:conv}
We say that our game is {\bf $j$-convex} if $T^\circ(p) \leq S(p)$ (that is, $p$ is $T^\circ$-positive) implies $T^\circ(q) \leq S(q)$ (equivalently, $q$ is $T^\circ$-positive) for all prefixes $q$ of the same size $k$ and {\em smaller} $J$-statistic $j$.  We say that our game is {\bf $k$-convex} if $T^\circ(p) > S(p)$ (that is, $p$ is $T^\circ$-negative) implies $T^\circ(q) > S(q)$ (equivalently, $q$ is $T^\circ$-negative) for all prefixes $q$ of the same $J$-statistic $j$ and {\em smaller} size $k$.
\end{definition}

By Lemma~\ref{l:pj}, if $p$ is a prefix of $q$ with size greater than $I$, then
$J(p) \geq J(q)$ and the size of $p$ is obviously smaller than the size of $q$.
(If $p$ has size less than or equal to $I$, the $J$-statistic is undefined.)
Thus, $j$-convexity and $k$-convexity imply $T^\circ$-convexity.  These then
imply by Theorem~\ref{t:reduction}(4) that $S^\circ$ also has a unique
transition from negative prefixes to positive prefixes.  Since $T^\circ$ agrees
with $S^\circ$ from the strategic boundary to the end of the game by
Theorem~\ref{t:reduction}(3), the optimal strategy we obtain from $T^\circ$
will agree with the true optimal strategy (defined via $S^\circ$).  By
Theorem~\ref{t:rnap}, it is therefore optimal to reject $T^\circ$-negative
prefixes and accept the first $T^\circ$-positive prefix that is encountered in
the game.

%%%%%%%%%%%%%%%%%%%%%%%%%%%%%%%%%%%%%%%%%%%%%%%%%%%%%%%%%%%%%%%%%%%%%
%  Section
%%%%%%%%%%%%%%%%%%%%%%%%%%%%%%%%%%%%%%%%%%%%%%%%%%%%%%%%%%%%%%%%%%%%%
\bigskip
\section{Closed formulas and convexity results}\label{s:5}

%%%%%%%%%%%%%%%%%%%%%%%%%%%%%%%%%%%%%%%%%%%%%%%%%%%%%%%%%%%%%%%%%%%%%
%  Subsection
%%%%%%%%%%%%%%%%%%%%%%%%%%%%%%%%%%%%%%%%%%%%%%%%%%%%%%%%%%%%%%%%%%%%%
\bigskip
\subsection{Closed formulas in the ``dot'' basis}\label{ss:dot}

In this section, we introduce closed formulas for the conditional probabilities that govern the optimal strategy.  In particular, these results solve the recurrence given earlier for $\tau^\circ$ in Lemma~\ref{l:basictau}.

Recall from Theorem~\ref{t:1main} that 
\[ R_J(x) = \frac{1}{N!\ (1-x)^{N-I}} \sum_{h=0}^{J-1} {N \choose h} x^h(1-x)^{N-h}. \]
We can view this as a matrix $A$ acting on the basis vectors $x^{J-1}(1-x)^{I-(J-1)}$ (for $J = 1, 2, \ldots, I+1$) where
\[ A_{r, c} = \begin{cases} {N \choose c-1} & \text{ if $c \leq r$ } \\ 0 & \text{ otherwise. } \end{cases} \]
for rows $1 \leq r \leq (I+1)$ and columns $1 \leq c \leq (I+1)$.  Then, we have
\[ N!\ R_J(x) = A_{r,c} \ \dot{R}_J(x) \]
where 
\[ \dot{R}_J(x) := (1-x)^I x^{J-1}(1-x)^{1-J}. \]
Therefore, the dot product $f_J(k, j) \cdot N!\ R_J(x)$ is equal to $\left(A_{r,c}\right)^t \ f_J(k, j) \cdot \dot{R}_J(x)$ by the adjoint property of the matrix transpose, for $f \in \{\sigma, \tau^\circ\}$.  (If $k > I$ and $j < I+1$, the values of $f_J(k,j)$ will be zero for all $J > j$ by Corollary~\ref{c:folding}, so we can restrict $A$ to its leading principal $j \times j$ submatrix in this case.)

We denote these transformed functions $\left(A_{r,c}\right)^t\ f_J(k,j)$ by $\dot{f}_J(k,j)$, and eventually prove the following closed formulas for them.

\begin{theorem}\label{t:rebasis}
    Suppose $k > I$.  Let 
    \[ \dot\sigma_J(k) = {N \choose {J-1}} \frac{(N-J)!}{(k-J)!}, \ \ \text{ and } \ \  \dot\tau_J^\circ(k) = \dot\sigma_J(k) \sum_{i=k-J+1}^{N-J} \frac{1}{i}. \]
    Then, the dot products $\dot\sigma_J(k) \cdot \dot{R}_J(x)$ and $\dot\tau^\circ_J(k) \cdot \dot{R}_J(x)$ yield the same polynomials as $N! (\sigma_J(k) \cdot R_J(x))$ and $N! (\tau^\circ_J(k) \cdot R_J(x))$, respectively.
\end{theorem}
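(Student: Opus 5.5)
The plan is to compute the transformed vectors $\dot f_J(k,j) = \left(A_{r,c}\right)^t f_J(k,j)$ directly. Since $A_{r,c} = {N \choose c-1}$ for $c \leq r$ and $0$ otherwise, the $c$th coordinate of the transform is
\[ \dot f_c(k,j) = {N \choose c-1} \sum_{J \geq c} f_J(k,j). \]
So everything reduces to computing the tail sums $\Sigma_c(k,j) := \sum_{J\geq c}\sigma_J(k,j)$ and $U_c(k,j) := \sum_{J\geq c}\tau^\circ_J(k,j)$ for $k > I$. The target is to show, for $c \leq j$,
\[ \Sigma_c(k,j) = \frac{(N-c)!}{(k-c)!} \quad\text{and}\quad U_c(k,j) = \frac{(N-c)!}{(k-c)!}\sum_{i=k-c+1}^{N-c}\frac1i, \]
with both tails vanishing for $c > j$. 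The vanishing for $c>j$ holds because $f_J(k,j) = 0$ for $J > j$ by Corollary~\ref{c:folding}, which is consistent with restricting $A$ to its leading $j\times j$ submatrix. Multiplying by ${N\choose c-1}$ then gives the stated $\dot\sigma_c(k)$ and $\dot\tau^\circ_c(k)$.

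The first step is a reduction in $j$. By Corollary~\ref{c:folding}, passing from $j+1$ to $j$ leaves $f_J$ unchanged for $J<j$, replaces $f_j$ by $f_j+f_{j+1}$, and kills $J>j$. Hence every tail sum with $c \leq j$ is the same for $j$ and $j+1$. Iterating, $\Sigma_c(k,j)$ and $U_c(k,j)$ are independent of $j$ as long as $c \leq j$. It therefore suffices to work at $j = I+1$, or with any convenient $j \geq c$.

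For $\sigma$, I would use Theorem~\ref{t:main_mu} with $k\geq I+1$ and $j=I+1$. The terms are $\sigma_J = (N-k)(N-J-1)!/(k-J)!$ for $J<j$ and $\sigma_j = (N-j)!/(k-j)!$. The tail sum telescopes via the identity
\[ \frac{(N-J)!}{(k-J)!} - \frac{(N-J-1)!}{(k-J-1)!} = \frac{(N-k)(N-J-1)!}{(k-J)!}, \]
which gives $\Sigma_c(k) = (N-c)!/(k-c)!$.

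For $\tau^\circ$, I would sum the recurrence of Lemma~\ref{l:basictau} (with $k>I$, so $B(k,j)=\sigma_J(k+1,j)$) over $J \geq c$, taking some $j\geq c$. The children with $i<c$ contribute nothing, since their tails vanish. The children with $c \leq i \leq j-1$ each contribute $U_c(k+1)$ by the $j$-independence above. Together with the $(k-j+1)$ copies of $U_c(k+1)$, this collapses to
\[ U_c(k) = \frac{(N-c)!}{(k+1-c)!} + (k-c+1)\,U_c(k+1), \qquad U_c(N)=0. \]
A reverse induction on $k$ from $N$ down to $I+1$ then verifies the harmonic-sum formula. The induction only uses values at size $k+1 > I$, so the case $k = I$ of $B$ never enters. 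The inductive step is the one-line check
\[ \frac{(N-c)!}{(k-c)!}\left[\frac{1}{k-c+1} + \sum_{i=k-c+2}^{N-c}\frac1i\right]. \]

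The main obstacle is the bookkeeping in summing the $\tau^\circ$ recurrence over $J\ge c$. One must justify carefully that each child tail $U_c(k+1,i)$ is either $0$ (when $i<c$) or equal to the common value (when $i \geq c$), using Corollary~\ref{c:folding} and Remark~\ref{r:fold}. One must also make sure that the boundary cases $k=N-1$ and $k-c+1 > N-c$ give empty sums consistently.
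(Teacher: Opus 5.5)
Your proposal is correct and follows essentially the same route as the paper. Your tail sums $\sum_{J\ge c} f_J$ are the paper's $\left(A_{r,c}\right)^t f$ with the factor ${N \choose c-1}$ removed, and your telescoping identity is the paper's hockey-stick summation for $\dot\sigma$. Your summed recurrence with reverse induction on $k$ is the paper's induction via Lemma~\ref{l:basictau} and Remark~\ref{r:uddtrans}; the only cosmetic difference is that you derive the $j$-independence of the tails up front from Corollary~\ref{c:folding}, whereas the paper invokes Remark~\ref{r:dotfold} inside the inductive step.
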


\begin{theorem}\label{t:rebasis_e}
    Suppose $k \leq I$.  Let 
    \[ \dot\sigma_J(k) = \begin{cases} \frac{1}{(k-1)!} {N \choose {J-1}} \frac{(N-J)!(I-1)!}{(I-J)!} & \text{ if $1 \leq J \leq I$, } \\ 0 & \text{ if $J = I+1$, } \end{cases} \]
    \[ \dot\tau^\circ_J(k = I) = \begin{cases} {N \choose {J-1}} \frac{(N-J)!}{(I-J)!} \sum_{i=I-J+1}^{N-J} \frac{1}{i} & \text{ when $1 \leq J \leq I$, } \\ \frac{N!}{I! (N-I)} = {N \choose {J-1}}(N-J)! & \text{ when $J = I+1$, }  \end{cases} \]
    and 
    \[ \dot\tau^\circ_J(k) = \frac{(I-1)!}{(k-1)!} \dot\tau^\circ_J(k=I) + \dot\sigma_J(k) \sum_{i=k}^{I-1} \frac{1}{i} \ \ \text{ when $k < I$. } \]

    Then, the dot products $\dot\sigma_J(k) \cdot \dot{R}_J(x)$ and $\dot\tau^\circ_J(k) \cdot \dot{R}_J(x)$ yield the same polynomials as $N! (\sigma_J(k) \cdot R_J(x))$ and $N! (\tau^\circ_J(k) \cdot R_J(x))$, respectively.
\end{theorem}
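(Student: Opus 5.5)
Since $k\le I$, every vector here is independent of $j$ by Lemma~\ref{l:basictau}, so the plan is to compute $A^t f$ directly, where $(A^t f)_J=\sum_{r\ge J}\binom{N}{J-1} f_r$. Multiplying by the transpose thus reduces to taking tail sums in the $J$-coordinate and then scaling by $\binom{N}{J-1}$.

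\textbf{Strike vector.} By Theorem~\ref{t:main_mu}, for $k\le I$ we have $\sigma_r(k)=\frac{(N-I)(N-r-1)!(I-1)!}{(I-r)!(k-1)!}$ for $r\le I$, and $\sigma_{I+1}(k)=0$. The needed identity is
\[ \sum_{r=J}^{I}(N-I)\frac{(N-r-1)!}{(I-r)!}=\frac{(N-J)!}{(I-J)!}. \]
This is a telescoping, hockey-stick type sum: $\frac{(N-r-1)!}{(I-r)!}=(N-I-1)!\binom{N-r-1}{I-r}$, and $\sum_{r\ge J}\binom{N-r-1}{I-r}=\binom{N-J}{I-J}$. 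Substituting gives the stated $\dot\sigma_J(k)$, including the value $0$ at $J=I+1$.

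\textbf{Interior vector at $k=I$.} Lemma~\ref{l:basictau} gives $\tau^\circ(I)=\sigma(I+1,I+1)+\sum_{i=1}^{I}\tau^\circ(I+1,i)$. Apply the linear map $A^t$ to both sides. By the folding recurrence (Corollary~\ref{c:folding}, Remark~\ref{r:fold}), all $\tau^\circ(I+1,i)$ and $\sigma(I+1,i)$ come from the diagonal data, so their transforms are truncations of Theorem~\ref{t:rebasis}. Concretely, the dotted vector at $j=i$ is $\dot f_J(I+1)$ for $J<i$, with a modified last coordinate. I expect this modification to work out because folding merges coordinate $j+1$ into $j$, which is compatible with the tail sums of $A^t$ except for the binomial weight $\binom{N}{J-1}$. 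This must be checked carefully.

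Summing over $i=1,\dots,I$ then gives, for coordinate $J\le I$, a contribution $\dot\sigma_J(I+1)$ plus $(I-J+1)$ copies of $\dot\tau^\circ_J(I+1)$, with corrections from the truncated coordinates. The target identity is
\[ \dot\tau_J^\circ(I)=\binom{N}{J-1}\frac{(N-J)!}{(I-J)!}\sum_{i=I-J+1}^{N-J}\frac1i. \]
It should follow from $\dot\sigma_J(I+1)=\binom{N}{J-1}\frac{(N-J)!}{(I+1-J)!}$ together with $(I-J+1)\cdot\frac{1}{(I+1-J)!}=\frac{1}{(I-J)!}$, since the extra harmonic term $\frac{1}{I-J+1}$ is exactly produced by the $\sigma$ term. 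The $J=I+1$ coordinate is checked directly, using $\tau^\circ_{I+1}(I)=\sigma_{I+1}(I+1,I+1)=(N-I-1)!$ from Table~\ref{ta:e}-type counts, times $\binom{N}{I}$.

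\textbf{Interior vector for $k<I$.} Here the recurrence is $\tau^\circ(k)=\sigma(k+1)+k\,\tau^\circ(k+1)$. Applying $A^t$ gives $\dot\tau^\circ(k)=\dot\sigma(k+1)+k\,\dot\tau^\circ(k+1)$. Use downward induction on $k$, with the theorem's formula as the hypothesis. Since $\dot\sigma(k+1)=\frac{(k-1)!}{k!}\cdot k\cdot\dot\sigma(k+1)$ and $\dot\sigma_J(k)=k\,\dot\sigma_J(k+1)$, multiplying through yields
\[ \dot\tau^\circ(k)=\frac{(I-1)!}{(k-1)!}\dot\tau^\circ(I)+\dot\sigma(k)\Bigl(\frac1k+\sum_{i=k+1}^{I-1}\frac1i\Bigr), \]
which is the claimed formula.

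\textbf{Main obstacle.} The hard step is the $k=I$ case: correctly transporting the folded vectors $\tau^\circ(I+1,i)$ through $A^t$ and handling the boundary coordinate $J=i$. I would first verify this numerically against Table~\ref{ta:e} with $N=8$, $I=3$.
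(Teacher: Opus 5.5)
Your route is the paper's. You compute $\dot\sigma_J(k)$ by the hockey-stick tail sum. You obtain $\dot\tau^\circ_J(I)$ by applying $A^t$ to the $k=I$ case of Lemma~\ref{l:basictau} and invoking Theorem~\ref{t:rebasis} at size $I+1$. You then run the downward induction $\dot\tau^\circ(k)=\dot\sigma(k+1)+k\,\dot\tau^\circ(k+1)$ using $\dot\sigma_J(k)=k\,\dot\sigma_J(k+1)$. The $\dot\sigma$ computation and the $k<I$ induction are correct as written.

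The step that is not yet a proof is the one you flag. You describe the dotted vector at $(I+1,i)$ as a truncation of $\dot f_J(I+1)$ ``with a modified last coordinate,'' and you allow for ``corrections from the truncated coordinates.'' Yet the identity you then verify uses the exact value $\dot\sigma_J(I+1)+(I-J+1)\,\dot\tau^\circ_J(I+1)$. That value would be false if any such correction existed.

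The missing observation is that there is no modification: under $A^t$, folding becomes pure truncation.
\begin{itemize}
\item We have $\dot f_J(k,j)=\binom{N}{J-1}\sum_{c\ge J}f_c(k,j)$.
\item For $J\le j$, Corollary~\ref{c:folding} gives $\sum_{c=J}^{j}f_c(k,j)=\sum_{c=J}^{j+1}f_c(k,j+1)$. Coordinate $j+1$ is merged into coordinate $j$, and every tail starting at $J\le j$ contains both.
\item The weight $\binom{N}{J-1}$ is attached to the output index $J$, not the summation index $c$, so it causes no trouble.
\item For $J>j$, every $f_c(k,j)$ with $c\ge J$ vanishes.
\end{itemize}
Hence $\dot f_J(k,j)=\dot f_J(k)$ for $J\le j$ and $0$ for $J>j$. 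This is Remark~\ref{r:dotfold}, and it is what the paper uses. It can also be read off the paper's proof of Theorem~\ref{t:rebasis}, where $\dot\sigma_J(k,j)$ comes out independent of $j$ for all $J\le j$.

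With this, $\sum_{i=1}^{I}\dot\tau^\circ_J(I+1,i)=(I-J+1)\,\dot\tau^\circ_J(I+1)$ exactly. Your absorption of $\frac{1}{I-J+1}$ into the harmonic sum then finishes the case $J\le I$.

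For $J=I+1$, replace the appeal to Table~\ref{ta:e} with the actual reasons:
\begin{itemize}
\item $\tau^\circ_{I+1}(I+1,i)=0$ for $i\le I$, by Lemma~\ref{l:pj}.
\item $\sigma_{I+1}(I+1,I+1)=(N-I-1)!$, by Theorem~\ref{t:main_mu}.
\end{itemize}
Multiplying by $\binom{N}{I}$ gives $\frac{N!}{I!(N-I)}$.
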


\begin{remark}\label{r:dotfold}
These dotted functions $\dot{f} \in \{\dot\sigma, \dot\tau^\circ\}$ are even
more stable under changes in $j$ than the original functions are (cf.
Corollary~\ref{c:folding} and Remark~\ref{r:fold}); their closed formulas do
not include $j$ at all! 

To be precise, when $k > I$ there is technically a dependence on $j$, but only
in the sense that $\dot{f}_J(k,j)$ is $0$ whenever $J > j$ (by
Corollary~\ref{c:folding}).  When $k \leq I$, the $J$-statistic of a prefix is
undefined so the $f_J(k,j)$ do not depend on $j$ at all.  Consequently, we have
suppressed $j$ from our notation in the statement of these results (but will
highlight any intermediate dependence on $j$ in our proofs).
\end{remark}

\begin{remark}\label{r:uddtrans}
Any linear relation or recurrence among the $\sigma$ and $\tau^\circ$ functions (not involving $J$, as $J$ is the ``coordinate variable'' for these vectors) translates to the {\em same} relation among the $\dot\sigma$ and $\dot\tau^\circ$, by applying the linear operator $\left(A_{r,c}\right)^t$, and vice versa.
\end{remark}

\begin{remark}\label{r:agree}
Observe that the two formulas for $\dot\sigma_J$ agree when $k = I$; we prove it as part of Theorem~\ref{t:rebasis_e} only.

The formula for $\dot\tau^\circ_J$ when $k = I$ and $J \leq I$ also satisfies 
\[ \dot\tau_J^\circ(I) = \dot\sigma_J(I) \sum_{i=I-J+1}^{N-J} \frac{1}{i} \]
as in Theorem~\ref{t:rebasis}.  The last entry of this expression is undefined (it includes an indeterminate form $\frac{0}{0}$) but $\dot\tau^\circ_{I+1}(I)$ is $\frac{N!}{I! (N-I)}$ by Theorem~\ref{t:rebasis_e}.
\end{remark}

\begin{remark}\label{r:simple}
When $k \leq I$, we find that the formulas in Theorem~\ref{t:rebasis_e} evidently satisfy
\[ \dot\sigma_J(k-1) = (k-1) \dot\sigma_J(k) \]
and
\[ \dot\tau_J^\circ(k-1) = (k-1) \dot\tau_J^\circ(k) + \dot\sigma_J(k-1) \left(\frac{1}{k-1}\right) = (k-1) \dot\tau_J^\circ(k) + \dot\sigma_J(k). \]
These simple recurrences will be useful for verifying $k$-convexity.
\end{remark}

\begin{proof}{\bf (of the formula for $\dot\sigma_J(k)$ in Theorem~\ref{t:rebasis})}
Suppose that $k > I$.  We can determine 
\[ \dot\sigma_J(k, j) := \left(A_{r,c}\right)^t \ \sigma_J(k, j) \]
using the formulas from Theorem~\ref{t:main_mu}, obtaining
\[ \dot\sigma_J(k, j) = \left(A^t\right)_{J, c} \ \sigma_c(k, j) = \left( \sum_{c = J}^{j-1} {N \choose {J-1}} \frac{(N-k)(N-c-1)!}{(k-c)!} \right) + {N \choose {J-1}} \frac{(N-j)!}{(k-j)!}. \]
The sum inside this expression simplifies as
\[ \sum_{c = J}^{j-1} \frac{(N-c-1)!}{(k-c)!}  
%= (N-k-1)! \sum_{c = J}^{j-1} {{N-c-1} \choose {k-c}} 
= (N-k-1)! \sum_{c = J}^{j-1} {{N-c-1} \choose {N-k-1}} 
= (N-k-1)! \sum_{d = 0}^{j-J-1} {{N-j+d} \choose {N-k-1}}. \]

A standard binomial coefficient summation formula is $\sum_{i=0}^n { {j+i} \choose {k} } = { {n+j+1} \choose {k+1} } - {j \choose {k+1}}$.  Applying this to our situation, we obtain
\[ \sum_{c = J}^{j-1} \frac{(N-c-1)!}{(k-c)!}  
= (N-k-1)!  \left( { { (j-J-1) + (N-j) + 1 } \choose { N-k } } - { {N-j} \choose {N-k} } \right) \]
\[ = (N-k-1)!  \left( { {N-J} \choose {N-k} } - { {N-j} \choose {N-k} } \right). \]
When we substitute this back into our expression for $\dot\sigma_J(k, j)$, we find
\[ \dot\sigma_J(k, j) =  {N \choose {J-1}} (N-k) \left( (N-k-1)!  \left( { {N-J} \choose {N-k} } - { {N-j} \choose {N-k} } \right) \right) + {N \choose {J-1}} \frac{(N-j)!}{(k-j)!} \]
\[ =  {N \choose {J-1}} (N-k)! \left( { {N-J} \choose {N-k} } - { {N-j} \choose {N-k} } \right) + {N \choose {J-1}} \frac{(N-j)!}{(k-j)!}  \]
\[ =  {N \choose {J-1}} \left( \frac{(N-J)!}{(k-J)!} - \frac{(N-j)!}{(k-j)!} \right) + {N \choose {J-1}} \frac{(N-j)!}{(k-j)!}.  \]
This is simply
\[ \dot\sigma_J(k, j) =  {N \choose {J-1}} \frac{(N-J)!}{(k-J)!} = \frac{N!}{(J-1)!(k-J)!(N-J+1)} \]
for each $J = 1, 2, \ldots, j$.
\end{proof}

\begin{proof}{\bf (of the formula for $\dot\sigma_J(k)$ in Theorem~\ref{t:rebasis_e})}
If $k \leq I$, we have 
\[  \sigma_J(k) =  \frac{(N-I)(N-J-1)!(I-1)!}{(I-J)!(k-1)!} = \frac{1}{(k-1)!} M_J(N-1, I-1)  \]
by Theorem~\ref{t:main_mu}.
Then,
\[ \dot\sigma_J(k, j) = \left(A^t\right)_{J, c} \ \sigma_c(k, j) = \sum_{c = J}^{I} {N \choose {J-1}} \frac{(N-I)(N-c-1)!(I-1)!}{(I-c)!(k-1)!} \]
% these are only defined up to I because of the (I-c)!  and they only have I nonzero coords anyway. . .
\[ %=  {N \choose {J-1}} \frac{(N-I)(I-1)!}{(k-1)!} \sum_{c = J}^{I}  \frac{(N-c-1)!}{(I-c)!} 
   =  {N \choose {J-1}} \frac{(N-I)(I-1)!}{(k-1)!} (N-I-1)! \sum_{c = J}^{I} { {N-1-c} \choose {N-I-1} } \]
\[ =  {N \choose {J-1}} \frac{(N-I)!(I-1)!}{(k-1)!} \sum_{d = 0}^{I-J} { {N-I-1+d} \choose {N-I-1} } \]

Using the $\sum_{i=0}^n { {j+i} \choose {k} } = { {n+j+1} \choose {k+1} } - {j \choose {k+1}}$ result again, we get
\[ =  {N \choose {J-1}} \frac{(N-I)!(I-1)!}{(k-1)!} \left( { {I-J + N-I-1 +1} \choose {N-I} } - { {N-I-1} \choose {N-I} } \right) \]
\[ =  {N \choose {J-1}} \frac{(N-I)!(I-1)!}{(k-1)!}  { {N-J} \choose {N-I} }.  \]
%\[ = \frac{N!}{(J-1)!(N-J+1)!} \frac{(N-J)!}{(I-J)!(N-I)!} \frac{(N-I)!(I-1)!}{(k-1)!} \]
%\[ = \frac{N!}{(J-1)!(N-J+1)} \frac{1}{(I-J)!} \frac{(I-1)!}{(k-1)!} \]
This is
\[ \dot\sigma_J(k) = \frac{1}{(k-1)!} {N \choose {J-1}} \frac{(N-J)!(I-1)!}{(I-J)!}. \]
for each $J = 1, 2, \ldots, I$.
\end{proof}

\begin{proof}{\bf (of the formula for $\dot\tau^\circ_J(k)$ in Theorem~\ref{t:rebasis})}
We prove this by induction.  
Since the formulas in Lemma~\ref{l:basictau} are linear, we obtain the same relations among the dotted functions by Remark~\ref{r:uddtrans}.  As the summation in the statement of Theorem~\ref{t:rebasis} will be empty when $k = N$, we find that the formula agrees with Lemma~\ref{l:basictau} and $\dot\tau^\circ_J(N) = 0$, establishing a base case.

Next, suppose that the formula in Theorem~\ref{t:rebasis} is correct for $k+1$ and consider the formula for $\dot\tau^\circ_J(k,j)$ from Lemma~\ref{l:basictau}.  We have
\[ \dot\tau^\circ_J(k,j) = \dot\sigma_J(k+1, j) + (k-j+1) \dot\tau^\circ_J(k+1, j) + \sum_{i=1}^{j-1} \dot\tau^\circ_J(k+1, i) \ \ \text{ if $I < k < N$, } \]
and using the induction hypothesis obtain
\[ = \dot\sigma_J(k+1, j) + (k-j+1) 
    \left( \dot\sigma_J(k+1, j) \sum_{i=(k+1)-J+1}^{N-J} \frac{1}{i} \right)
 + \sum_{h=1}^{j-1} 
 \left( \dot\sigma_J(k+1, h) \sum_{i=(k+1)-J+1}^{N-J} \frac{1}{i} \right).
 \]
By the formula for $\dot\sigma_J$ already proved, we have
\[ \dot\sigma_J(k+1, j) = \frac{1}{k+1-J}\ \dot\sigma_J(k, j) \]
and applying Remark~\ref{r:dotfold} we have
\[ \dot\sigma_J(k+1,h) = \begin{cases} \dot\sigma_J(k+1, j) & \text{ if $h \geq J$ } \\ 0 & \text{ if $h < J$. } \end{cases} \]

Thus, for all $1 \leq J \leq j$ our expression for $\dot\tau^\circ_J(k,j)$ becomes
\[ = \dot\sigma_J(k, j) \left( \frac{1}{k+1-J} + \frac{k-j+1}{k+1-J} \sum_{i=(k+1)-J+1}^{N-J} \frac{1}{i} \right) + \sum_{h=J}^{j-1} \left( \dot\sigma_J(k+1, j) \sum_{i=(k+1)-J+1}^{N-J} \frac{1}{i} \right)  \]
\[ = \dot\sigma_J(k, j) \left( \frac{1}{k+1-J} + \frac{k-j+1}{k+1-J} \sum_{i=(k+1)-J+1}^{N-J} \frac{1}{i} \right) + \frac{j-1-J+1}{k+1-J} \left( \dot\sigma_J(k, j) \sum_{i=(k+1)-J+1}^{N-J} \frac{1}{i} \right)  \]
\[ = \dot\sigma_J(k, j) \left( \frac{1}{k+1-J} + \frac{\left(k-j+1\right) + \left(j-1-J+1\right)}{k+1-J} \sum_{i=(k+1)-J+1}^{N-J} \frac{1}{i} \right) \]
%\[ = \sigma_J(k, j) \left( \frac{1}{k+1-J} + \sum_{i=(k+1)-J+1}^{N-J} \frac{1}{i} \right) = \sigma_J(k, j) \left( \sum_{i=k-J+1}^{N-J} \frac{1}{i} \right) \]
which is $\dot\sigma_J(k, j) \left( \sum_{i=k-J+1}^{N-J} \frac{1}{i} \right)$, as desired.
\end{proof}

\begin{proof}{\bf (of the formulas for $\dot\tau^\circ_J(k)$ in Theorem~\ref{t:rebasis_e})}
By Lemma~\ref{l:basictau} and Remark~\ref{r:uddtrans}, we have
\[ \dot\tau^\circ_J(I) = \dot\sigma_J(I+1, I+1) + \sum_{i=1}^{k} \dot\tau^\circ_J(I+1, i) \ \ \text{ if $k = I$. } \]
By what we've already proved, this is
%\[ = \dot\sigma_J(I+1,I+1) + \sum_{h=1}^{I} \left( \dot\sigma_J(I+1, h) \sum_{i=(I+1)-J+1}^{N-J} \frac{1}{i} \right) \]
\[ = \dot\sigma_J(I+1,I+1) + \sum_{h=J}^{I} \left( \dot\sigma_J(I+1, h) \sum_{i=(I+1)-J+1}^{N-J} \frac{1}{i} \right) \]
\[ = \dot\sigma_J(I+1,I+1) + (I-J+1) \left( \dot\sigma_J(I+1, I+1) \sum_{i=(I+1)-J+1}^{N-J} \frac{1}{i} \right). \]
When $J=I+1$, this is simply $\dot\sigma_{I+1}(I+1,I+1) = \frac{N!}{(N-I) I!}$, while for $1 \leq J \leq I$, we have
%\[ = \dot\sigma_J(I+1,I+1) \left( 1 + (I-J+1) \sum_{i=(I+1)-J+1}^{N-J} \frac{1}{i} \right) \]
\[ = \dot\sigma_J(I+1,I+1)  (I-J+1) \left( \frac{1}{I-J+1} + \sum_{i=(I+1)-J+1}^{N-J} \frac{1}{i} \right) \]
%\[ = \dot\sigma_J(I+1,I+1)  (I-J+1) \left( \sum_{i=I-J+1}^{N-J} \frac{1}{i} \right) \]
\[ = {N \choose {J-1}} \frac{(N-J)!}{(I+1-J)!} (I-J+1) \left( \sum_{i=I-J+1}^{N-J} \frac{1}{i} \right) = {N \choose {J-1}} \frac{(N-J)!}{(I-J)!} \left( \sum_{i=I-J+1}^{N-J} \frac{1}{i} \right) \]
as desired. 

Observe that the formula
\[ \dot\tau^\circ_J(k) = \frac{(I-1)!}{(k-1)!} \dot\tau^\circ_J(k=I) + \dot\sigma_J(k) \sum_{i=k}^{I-1} \frac{1}{i} \]
is true when $k = I$ (since the sum is empty), and that the previous formula provides an explicit form for it.
We argue the general case by induction, so suppose that $k < I$ and that the formula holds for $\dot\tau^\circ_J(k+1)$.
From Lemma~\ref{l:basictau}, we have
\[ \dot\tau^\circ_J(I) = \dot\sigma_J(k+1) + k \ \dot\tau^\circ_J(k+1) \ \ \text{ if $k < I$ } \]
and by the induction hypothesis this is
\[  = \dot\sigma_J(k+1) + k \left( \frac{(I-1)!}{(k)!} \dot\tau^\circ_J(k=I) + \dot\sigma_J(k+1) \sum_{i=k+1}^{I-1} \frac{1}{i} \right) \]
%(using \[ \dot\tau^\circ_J(k) = \frac{(I-1)!}{(k-1)!} \tau^\circ_J(k=I) + \dot\sigma_J(k) \sum_{i=k}^{I-1} \frac{1}{i} \ \ \text{ when $k < I$). } \]
which is
\[  = \dot\sigma_J(k+1) \left(1 + k \sum_{i=k+1}^{I-1} \frac{1}{i}\right) + \frac{(I-1)!}{(k-1)!} \dot\tau^\circ_J(k=I) \]
\[  = \frac{1}{k} \dot\sigma_J(k) \left(1 + k \sum_{i=k+1}^{I-1} \frac{1}{i}\right) + \frac{(I-1)!}{(k-1)!} \dot\tau^\circ_J(k=I). \]
Thus, we obtain $\dot\sigma_J(k) \left(\sum_{i=k}^{I-1} \frac{1}{i}\right) + \frac{(I-1)!}{(k-1)!} \dot\tau^\circ_J(k=I)$ as desired.
\end{proof}

%%%%%%%%%%%%%%%%%%%%%%%%%%%%%%%%%%%%%%%%%%%%%%%%%%%%%%%%%%%%%%%%%%%%%
%  Subsection
%%%%%%%%%%%%%%%%%%%%%%%%%%%%%%%%%%%%%%%%%%%%%%%%%%%%%%%%%%%%%%%%%%%%%
\bigskip
\subsection{Proof of $j$-convexity}\label{s:jconvex}

Here, we prove the first of our $T^\circ$-convexity properties described in Section~\ref{s:red}.

\begin{lemma}\label{l:sp_j}
Suppose $k > I$.  Then, the signs of $\dot\sigma_J(k) - \dot\tau^\circ_J(k)$
are all positive, all negative, or have the form $(+, +, \ldots, +, -, -,
\ldots, -)$ as $J$ varies.  Here, $+$ indicates ``$\geq 0$'' whereas $-$ means
``$< 0$'' in agreement with our ``positive'' and ``negative'' nomenclature for
prefixes.
\end{lemma}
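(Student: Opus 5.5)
We will use the closed formulas of Theorem~\ref{t:rebasis}, which hold for $k > I$. Factoring out the common positive term gives
\[ \dot\sigma_J(k) - \dot\tau^\circ_J(k) = \dot\sigma_J(k)\left(1 - \sum_{i=k-J+1}^{N-J} \frac{1}{i}\right), \]
where
\[ \dot\sigma_J(k) = {N \choose {J-1}} \frac{(N-J)!}{(k-J)!} > 0 \]
for every $J = 1, \ldots, I+1$. Here $J \le I+1 \le k$, so $(k-J)! $ is defined and the factor is strictly positive. Consequently the sign of the $J$th coordinate is exactly the sign of
\[ g(J) := 1 - H(J), \qquad H(J) := \sum_{i=k-J+1}^{N-J} \frac{1}{i}. \]
The convention ``$\geq 0$'' versus ``$<0$'' transfers verbatim, because we only multiply by a positive number.

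It therefore suffices to show that $H(J)$ is nondecreasing in $J$. Once that holds, the set $\{J : H(J) \le 1\}$ is an initial segment of $\{1, \ldots, I+1\}$, and this is precisely the pattern $(+,\ldots,+,-,\ldots,-)$, including the degenerate all-plus and all-minus cases. Passing from $J$ to $J+1$ shifts both endpoints of the summation window down by one while keeping its length $N-k$ fixed. Explicitly,
\[ H(J+1) - H(J) = \frac{1}{k-J} - \frac{1}{N-J}. \]
This is $\ge 0$ because $k \le N$, and both denominators are positive since $J \le I < k$ whenever $J+1 \le I+1$. So $H$ is nondecreasing and $g$ is nonincreasing.

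The only point needing care is the boundary case $k = N$. There the summation window is empty, so $H \equiv 0$ and every coordinate is ``$+$'', which is consistent with the claim. We should also confirm that coordinates with $J > j$ do not disturb the pattern. By Remark~\ref{r:dotfold}, the closed formula is the correct dotted vector for all $J \le j$, and the coordinates with $J > j$ vanish. Since the entries are $0 - 0 = 0 \ge 0$, these trailing zeros appear as ``$+$'' after possible ``$-$'' signs. We therefore either restrict attention to $J \le j$, as the statement implicitly does via Theorem~\ref{t:rebasis}'s suppression of $j$, or note that the lemma is about the formula vectors themselves. I expect this bookkeeping about the range of $J$ to be the main (minor) obstacle; the monotonicity computation itself is immediate.
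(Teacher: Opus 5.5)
Your proof is correct and follows the paper's argument: factor out the positive $\dot\sigma_J(k)$, then observe that $1-\sum_{i=k-J+1}^{N-J}\frac{1}{i}$ is monotone in $J$. Your explicit difference $\frac{1}{k-J}-\frac{1}{N-J}\ge 0$ fills in the step the paper states without computation. It also correctly gives only weak monotonicity, since the sum is identically zero when $k=N$, whereas the paper says ``strictly decreasing''.
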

\begin{proof}
By Theorem~\ref{t:rebasis}, we have that $\dot\sigma_J(k) - \dot\tau^\circ_J(k) = \dot\sigma_J \left(1 - \sum_{i=k-J+1}^{N-J} \frac{1}{i}\right)$.
Since $\dot\sigma_J(k)$ is always positive, the signs are controlled by the $\left(1 - \sum_{i=k-J+1}^{N-J} \frac{1}{i}\right)$ term, which is strictly decreasing.
\end{proof}

\begin{theorem}\label{t:jconvex}
Suppose some prefix with size $k$ and $J$-statistic $j$ is $T^\circ$-positive.  Then, each prefix with size $k$ and smaller $J$-statistic is also $T^\circ$-positive.
\end{theorem}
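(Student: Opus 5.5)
Since the $J$-statistic is undefined for $k \leq I$, only $k > I$ needs an argument, and there I would work in the dot basis. By Theorem~\ref{t:rebasis}, Remark~\ref{r:dotfold} and the adjoint identity of Section~\ref{ss:dot}, for a prefix $p$ of size $k > I$ and $J$-statistic $j$ we have
\[ N!\,\bigl(S(p) - T^\circ(p)\bigr) = \sum_{J=1}^{j} \bigl(\dot\sigma_J(k) - \dot\tau^\circ_J(k)\bigr)\,\dot{R}_J(x) =: D_j(x), \]
the truncation to $J \leq j$ coming from the vanishing of $\dot f_J(k,j)$ for $J > j$. The key feature is that the summands do not depend on $j$: moving from $j$ to a smaller $j'$ simply deletes the terms with $j' < J \leq j$. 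For $x \in [0,1)$ each weight $\dot{R}_J(x) = (1-x)^{I-J+1}x^{J-1}$ is nonnegative. The case $x = 0$ is degenerate, because then only $J=1$ survives and $D_j(0)$ is independent of $j$; so the substance lies in $x \in (0,1)$, where every $\dot R_J(x) > 0$.

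By Lemma~\ref{l:sp_j}, the coefficients $c_J := \dot\sigma_J(k) - \dot\tau^\circ_J(k)$ have sign pattern $(+,\ldots,+,-,\ldots,-)$ in $J$, with the sign change at some threshold $J_0$ that depends only on $k$. The argument then splits in two. If $j' \leq J_0$, every surviving term of $D_{j'}(x)$ has $c_J \geq 0$, so $D_{j'}(x) \geq 0$ and the prefix is $T^\circ$-positive directly. If $j' > J_0$, then
\[ D_{j'}(x) = D_j(x) - \sum_{J=j'+1}^{j} c_J\,\dot R_J(x), \]
and every deleted term has $J > j' > J_0$, hence $c_J < 0$. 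Removing these terms can therefore only increase the sum, so $D_{j'}(x) \geq D_j(x) \geq 0$, where the last inequality is the hypothesis. In both cases the smaller $J$-statistic prefix is $T^\circ$-positive. Equivalently, $j \mapsto D_j(x)$ is a partial sum of a sequence that is first nonnegative and then negative, so it rises and then falls, and the set of $j$ with $D_j(x) \geq 0$ is an initial segment.

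I expect the only real obstacle to be justifying the basis change cleanly. One must make sure the truncation at $j$ is compatible with the transpose of $A$: Section~\ref{ss:dot} notes that one can restrict to the leading principal $j\times j$ submatrix, which works because $A$ is lower triangular. One must also confirm that $\dot R_J(x) \geq 0$, including the boundary exponents at $J = I+1$, and that the $j$-independent formula for $\dot\tau^\circ_J(k)$ from Theorem~\ref{t:rebasis} holds for every $J \leq j$ simultaneously. Once these are checked, the monotonicity argument above is a one-line computation.
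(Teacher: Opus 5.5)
Your proof is correct and is essentially the paper's argument: both combine the truncation to $\sum_{J=1}^{j}$ from Remark~\ref{r:dotfold}, the nonnegativity of the weights $\dot R_J(x)$, and the $(+,\ldots,+,-,\ldots,-)$ sign pattern of Lemma~\ref{l:sp_j}. The paper passes from $j$ to $j-1$ by splitting on the sign of the last summand $(\dot\sigma_j-\dot\tau^\circ_j)\dot R_j$, whereas you split on the coefficient threshold $J_0$ and reach every $j'<j$ at once, which has the small advantage of never needing $\dot R_j(x)>0$.
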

\begin{proof}
In order for the prefix to have a well-defined $J$-statistic, we assume $k > I$ is fixed.  Let $p$ be a prefix with size $k$ and $J$-statistic $j$.  By definition, $T^\circ$-positive means that $S(p) - T^\circ(p) \geq 0$.  This is equivalent to
\[ \sum_{J=1}^j (\dot\sigma_J - \dot\tau^\circ_J) \dot{R}_J \geq 0 \]
using Remark~\ref{r:dotfold} to rewrite the upper limit of the sum and where we have suppressed the argument $(k)$ from each quantity for notational convenience.  Similarly, if $q$ is a prefix with size $k$ and $J$-statistic $(j-1)$, then $S(q) - T^\circ(q) \geq 0$ is equivalent to $\sum_{J=1}^{j-1} (\dot\sigma_J - \dot\tau^\circ_J) \dot{R}_J \geq 0$.  

Therefore, we rewrite our inequality as
\[ \sum_{J=1}^{j-1} (\dot\sigma_J - \dot\tau^\circ_J) \dot{R}_J + (\sigma_j - \tau^\circ_j) \dot{R}_j \geq 0,  \]
and consider the cases for the signs of the last summand.

If $(\dot\sigma_j - \dot\tau^\circ_j) \dot{R}_j$ is negative, then
$\sum_{J=1}^{j-1} (\dot\sigma_J - \dot\tau^\circ_J) \dot{R}_J \geq -(\sigma_j - \tau^\circ_j) \dot{R}_j > 0$
as desired.

Otherwise $(\dot\sigma_j - \dot\tau^\circ_j) \dot{R}_j$ is positive, so $(\dot\sigma_j - \dot\tau^\circ_j)$ is positive, so $(\dot\sigma_J - \dot\tau^\circ_J)$ is positive for each $J < j$ by Lemma~\ref{l:sp_j}.  Thus, 
$\sum_{J=1}^{j-1} (\dot\sigma_J - \dot\tau^\circ_J) \dot{R}_J \geq 0$
in this case as well.
\end{proof}

%%%%%%%%%%%%%%%%%%%%%%%%%%%%%%%%%%%%%%%%%%%%%%%%%%%%%%%%%%%%%%%%%%%%%
%  Subsection
%%%%%%%%%%%%%%%%%%%%%%%%%%%%%%%%%%%%%%%%%%%%%%%%%%%%%%%%%%%%%%%%%%%%%
\bigskip
\subsection{Proof of $k$-convexity}\label{s:kconvex}

Observe that a prefix with length $k$ is $T^\circ$-negative if and only if 
\[ \frac{\dot\tau^\circ_J(k) \cdot \dot{R}_J(x)}{\dot\sigma_J(k) \cdot \dot{R}_J(x)} > 1. \]
Therefore, it suffices to show that
\[ \frac{\dot\tau^\circ_J(k) \cdot \dot{R}_J(x)}{\dot\sigma_J(k) \cdot \dot{R}_J(x)} \geq \frac{\dot\tau^\circ_J(k+1) \cdot \dot{R}_J(x)}{\dot\sigma_J(k+1) \cdot \dot{R}_J(x)} \]
for all $k$ in order to prove the $k$-convexity property of Definition~\ref{d:conv}.

Cross-multiplying, it suffices to show that
\begin{equation}\label{e:k_convex}
 \left(\dot\tau^\circ_J(k) \cdot \dot{R}_J(x)\right) \ \left(\dot\sigma_J(k+1) \cdot \dot{R}_J(x)\right) \ -\ \left(\dot\tau^\circ_J(k+1) \cdot \dot{R}_J(x)\right) \ \left(\dot\sigma_J(k) \cdot \dot{R}_J(x)\right) 
\end{equation}
is nonnegative for all $x \in (0,1)$.  We do this now for the case $k < I$.

\begin{lemma}
Suppose $k < I$.  If a prefix of size $k+1$ is $T^\circ$-negative then prefixes of size $k$ are $T^\circ$-negative.
\end{lemma}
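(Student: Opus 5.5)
We use the simple recurrences of Remark~\ref{r:simple}, valid for $k \leq I$. Since $k < I$, both $k$ and $k+1$ are at most $I$, so
\[ \dot\sigma_J(k) = k\,\dot\sigma_J(k+1), \qquad \dot\tau^\circ_J(k) = k\,\dot\tau^\circ_J(k+1) + \dot\sigma_J(k+1). \]
These are linear relations holding coordinatewise in $J$, with coefficients independent of $J$. Dotting with $\dot{R}_J(x)$ preserves them. Writing $S_{k} = \dot\sigma_J(k)\cdot\dot{R}_J(x)$ and $T_{k} = \dot\tau^\circ_J(k)\cdot\dot{R}_J(x)$, we get
\[ S_k = k\, S_{k+1}, \qquad T_k = k\, T_{k+1} + S_{k+1}. \]
By Theorem~\ref{t:rebasis_e}, these equal $N!$ times the true $S$ and $T^\circ$ payoffs for prefixes of size $k$ and $k+1$. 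For $k \le I$ the $J$-statistic plays no role, so every prefix of a given size has the same payoffs.

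Next we substitute into expression~(\ref{e:k_convex}):
\[ T_k S_{k+1} - T_{k+1} S_k = (k T_{k+1} + S_{k+1}) S_{k+1} - T_{k+1}\, k S_{k+1} = S_{k+1}^2 \geq 0. \]
This gives the ratio inequality $T_k/S_k \geq T_{k+1}/S_{k+1}$ whenever $S_k, S_{k+1} > 0$. More directly, $T_k - S_k = k(T_{k+1} - S_{k+1}) + S_{k+1}$. If the prefix of size $k+1$ is $T^\circ$-negative, then $T_{k+1} > S_{k+1}$. Since $S_{k+1} \geq 0$ (it is $N!$ times a sum of probabilities), it follows that $T_k - S_k > 0$, so prefixes of size $k$ are $T^\circ$-negative. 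Using the direct difference form avoids dividing by $S$, which could be zero. That happens when $x$ is such that the relevant $\dot R$-weighted sum vanishes, or formally at $J=I+1$, where $\dot\sigma_{I+1}(k)=0$.

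The main point to check is that Remark~\ref{r:simple} applies at both indices $k$ and $k+1$. Its recurrence $\dot\tau^\circ_J(k-1) = (k-1)\dot\tau^\circ_J(k) + \dot\sigma_J(k)$ is stated for indices up to $I$, and we use it with $k+1 \le I$. It comes from the $k<I$ case of Lemma~\ref{l:basictau}, namely $\tau^\circ_J(k) = \sigma_J(k+1) + k\,\tau^\circ_J(k+1)$, transported by Remark~\ref{r:uddtrans}. That case is valid exactly when $k < I$, so the hypothesis is precisely what is needed. Beyond this, the argument is a one-line algebraic identity, and no sign analysis in $x$ is required.
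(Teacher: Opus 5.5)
Your proposal is correct and follows the paper's own proof: it substitutes the recurrences of Remark~\ref{r:simple} into (\ref{e:k_convex}) to obtain $\bigl(\dot\sigma_J(k+1)\cdot\dot{R}_J(x)\bigr)^2 \geq 0$. Your extra identity $T_k - S_k = k(T_{k+1}-S_{k+1}) + S_{k+1}$ is a small but welcome refinement, because it gives the implication directly without the cross-multiplication step, which tacitly requires the strike payoffs to be positive.
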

\begin{proof}
Applying 
\[ \dot\sigma_J(k) = k \dot\sigma_J(k+1) \ \ \text{ and } \ \  \dot\tau_J^\circ(k) = k \dot\tau_J^\circ(k+1) + \dot\sigma_J(k+1) \]
from Remark~\ref{r:simple} to Equation~(\ref{e:k_convex}) we obtain
\[ \left( \left( k \dot\tau^\circ_J(k+1) + \dot\sigma_J(k+1) \right) \cdot \dot{R}_J(x)\right) \ \left(\dot\sigma_J(k+1) \cdot \dot{R}_J(x)\right) \ -\ \left(\dot\tau^\circ_J(k+1) \cdot \dot{R}_J(x)\right) \ \left(k \dot\sigma_J(k+1) \cdot \dot{R}_J(x)\right) \]
which is $\left(\dot\sigma_J(k+1) \cdot \dot{R}_J(x)\right)^2$, so is always nonnegative.
\end{proof}

To approach the case $k \geq I$, observe that the coefficient of $x^\ell (1-x)^{2I-\ell}$ in the polynomial (\ref{e:k_convex}) is
\[ \sum_{J1 + J2 = \ell+2} \dot\tau^\circ_{J1}(k) \ \dot\sigma_{J2}(k+1) - \dot\tau^\circ_{J2}(k+1) \ \dot\sigma_{J1}(k) \]
by the usual distributive rule for polynomials.  (We strategically switched parameters in the negative part, but we sum over a set of pairs that is symmetric, so this formulation remains mathematically correct.)  Using the formulas from Theorem~\ref{t:rebasis}, this coefficient is
\begin{equation}\label{e:cp}
    \sum_{\substack{J1 + J2 = \ell+2 \\ 1 \leq J1, J2 \leq j}} \dot\sigma_{J1}(k) \ \dot\sigma_{J2}(k+1) \ \left( \sum_{i=k+1-J1}^{N-J1} \frac{1}{i} - \sum_{i=k+2-J2}^{N-J2} \frac{1}{i} \right) 
\end{equation}
for all $k > I$ and it also holds for $k = I$ as long as $J1 \neq I+1$ by Remark~\ref{r:agree}.

Our goal is to show that each coefficient of $x^\ell (1-x)^{2I-\ell}$, as given
by (\ref{e:cp}), is positive (and it is straightforward to verify that this
holds even when $k = I$ and $J1 = I+1$), but some of the terms in the sum are
empirically negative for some values of $J1$ and $J2$.  Let $\beta(J1,
J2)$ denote the individual term 
\[ \dot\sigma_{J1}(k) \ \dot\sigma_{J2}(k+1) \ \left( \sum_{i=k+1-J1}^{N-J1} \frac{1}{i} - \sum_{i=k+2-J2}^{N-J2} \frac{1}{i} \right) \] 
from this sum.

\begin{lemma}\label{l:b1}
If a term $\beta(J1, J2)$ is negative, we must have $J1 < J2-1$.
\end{lemma}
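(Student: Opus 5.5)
Since $\dot\sigma_{J1}(k)$ and $\dot\sigma_{J2}(k+1)$ are positive by the closed formulas of Theorem~\ref{t:rebasis}, the sign of $\beta(J1,J2)$ equals the sign of the harmonic difference
\[ D(J1,J2) = \sum_{i=k+1-J1}^{N-J1} \frac{1}{i} - \sum_{i=k+2-J2}^{N-J2} \frac{1}{i}. \]
I will prove the contrapositive: if $J1 \geq J2-1$, then $D(J1,J2) \geq 0$.

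The key observation is monotonicity in the index. Write $H(a,b)=\sum_{i=a}^{b} 1/i$. The first sum is $H(k+1-J1,\,N-J1)$. The second is $H(k+1-(J2-1),\,N-(J2-1)) - \frac{1}{N-J2+1}$. To see this, shift $J2$ to $J2' = J2-1$: then $k+2-J2 = k+1-J2'$ and $N-J2 = N-J2'-1$. So the second sum equals $H(k+1-J2',\,N-J2')$ with its top term removed.

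Now consider $g(J) = H(k+1-J,\,N-J)$, a window of $N-k$ consecutive reciprocals starting at $k+1-J$. As $J$ increases, the window slides toward smaller integers, so every term weakly increases. Hence $g$ is nondecreasing in $J$. If $J1 \geq J2' = J2-1$, then $g(J1) \geq g(J2') > g(J2') - \frac{1}{N-J2+1}$, which gives $D>0$. So $D<0$ forces $J1 < J2-1$.

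The main obstacle is the boundary regime where a window contains nonpositive indices or is empty. This happens when $J > k$ (the case $k=I$, $J1=I+1$), or at $k=N-1$, where the sums are short. I would handle it by recalling that $\dot\sigma_{J}(k)$ vanishes when $J>k$, since $1/(k-J)!$ is interpreted as $0$; then $\beta=0$ there and there is nothing to prove. The paper has already set aside the case $J1=I+1$, $k=I$ via Remark~\ref{r:agree}, so on the remaining range all indices $i$ satisfy $i\geq 1$ and the sliding-window comparison is valid. An empty second sum only helps.
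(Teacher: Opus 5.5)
Your proof is correct and is essentially the paper's argument. Both prove the contrapositive by showing that the harmonic difference is strictly positive when $J1 \geq J2-1$. The paper cancels the overlapping terms and pairs the $s = J1-J2+1$ leftover small-index reciprocals against the $s-1$ leftover large-index ones. You instead observe that a fixed-length window of reciprocals grows as it slides toward smaller indices, and that the second sum is such a window minus its top term; this is the same comparison in different bookkeeping.

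One minor point: your remark that $\beta = 0$ when $k = I$ and $J1 = I+1$ is not really meaningful, because the harmonic factor then contains $1/0$. Simply excluding that case, as the paper does via Remark~\ref{r:agree}, is the right resolution.
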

\begin{proof}
    Since $\dot\sigma$ terms are always positive, the sign of $\beta(J1, J2)$ is entirely determined by the sign of 
    \begin{equation}\label{e:rd}
    \left( \sum_{i=k+1-J1}^{N-J1} \frac{1}{i} - \sum_{i=k+2-J2}^{N-J2} \frac{1}{i} \right).
    \end{equation}

    Consider the contrapositive:  assume $J1 \geq J2-1$ so $-J1 \leq 1-J2$ and the starting points for the summation satisfy $k+1-J1 \leq k+2-J2$.  
    Let $s = J1-J2+1$, representing the difference between these starting points.  Then the uncanceled positive terms from (\ref{e:rd}) are
    \[ \sum_{i=0}^{s-1} \frac{1}{k+1-J1 + i}. \]
    In a similar way, the uncanceled negative terms run from $N-J1+1$ through $N-J2$.  Since $N-J1+1 = (N-J2)-(s-2)$, the uncanceled negative terms from (\ref{e:rd}) are
    % using $-J2 = s-J1-1$, so 
    \[ \sum_{i=0}^{s-2} \frac{-1}{N-J2-i} = \sum_{i=0}^{s-2} \frac{-1}{N-1+s-J1-i} = \sum_{i=0}^{s-2} \frac{-1}{N+1-J1 + \left((s-2)-i\right)}. \]
    Because $N > k$, we can compare the uncanceled positive sum with the uncanceled negative sum term-by-term to find that (\ref{e:rd})
    is positive whenever $J1 \geq J2-1$.
\end{proof}

\begin{definition}
When the parameters for a 1-step reselect game satisfy
\[ I \leq N - \sqrt{N+\frac{5}{4}} + \frac{1}{2}, \]
we say that the game is {\bf simply explicable}.
\end{definition}

Some games are not simply explicable, but the proportion of these tends to $0$ as $N$ becomes large.  (It is an open problem to prove $k$-convexity for the games that are not ``simply explicable.'')  Our goal for the remainder of this section is to prove the following result for simply explicable games, that any negative term from (\ref{e:cp}) can be canceled by a corresponding larger positive term.
\begin{lemma}\label{l:cancel}
Suppose the game is simply explicable.  If $\beta(J1,J2)$ is negative then the absolute value of $\beta(J1,J2)$ is less than or equal to $\beta(J2-1, J1+1)$ (which is necessarily positive).
\end{lemma}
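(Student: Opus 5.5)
The plan is to compare $\beta(J1,J2)$ and $\beta(J2-1,J1+1)$ directly, using the closed formulas of Theorem~\ref{t:rebasis}. Write $a=J1$ and $b=J2$. By Lemma~\ref{l:b1}, a negative $\beta(a,b)$ forces $b\ge a+2$. Then the partner pair $(a',b')=(b-1,a+1)$ satisfies $a'\ge b'$, so $a'\ge b'-1$, and Lemma~\ref{l:b1} (in its contrapositive form) shows $\beta(b-1,a+1)>0$. The partner pair also has the same sum $a'+b'=\ell+2$, so it appears in the same coefficient (\ref{e:cp}). For the index ranges: since $a<b\le j$, we have $b-1\le j$ and $a+1\le j$. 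The case $k=I$ with $b-1=I+1$ cannot occur, because $b\le I+1$.

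\textbf{Step 1 (the $\dot\sigma$ factors).} Write $D(J1,J2)$ for the bracketed difference of harmonic sums (\ref{e:rd}). Using $\dot\sigma_J(k)=\frac{N!}{(J-1)!(k-J)!(N-J+1)}$, I would compute the ratio
\[ \rho:=\frac{\dot\sigma_{b-1}(k)\,\dot\sigma_{a+1}(k+1)}{\dot\sigma_a(k)\,\dot\sigma_b(k+1)} = \frac{b-1}{a}\cdot\frac{(N-a+1)(N-b+1)}{(N-a)(N-b+2)} . \]
All factorials in $k$ cancel. The first factor is at least $\frac{a+1}{a}>1$. The second factor is slightly less than $1$: it equals $\bigl(1+\frac{1}{N-a}\bigr)\big/\bigl(1+\frac{1}{N-b+1}\bigr)$.

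\textbf{Step 2 (the harmonic differences).} Shifting the endpoints of the sums by one gives the identity
\[ D(b-1,a+1) = -D(a,b) + \frac{1}{N-b+1} + \frac{1}{N-a}. \]
So whenever $D(a,b)<0$, the partner difference is $|D(a,b)|+\varepsilon$ with $\varepsilon=\frac{1}{N-b+1}+\frac{1}{N-a}>0$. The desired inequality $|\beta(a,b)|\le\beta(b-1,a+1)$ then becomes
\[ |D(a,b)| \le \rho\,\bigl(|D(a,b)|+\varepsilon\bigr). \]
If $\rho\ge 1$ this holds trivially. Otherwise it is equivalent to $(1-\rho)\,|D(a,b)|\le\rho\,\varepsilon$.

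\textbf{Step 3 (the main obstacle).} The key inequality is this last one, and it is where simple explicability must enter. I would first simplify using $\frac{b-1}{a}\ge\frac{a+1}{a}$, which reduces $\rho$ to an explicit function of $a$, $b$ and $N$. Next I would bound $|D(a,b)|$ from above. After cancellation, $-D(a,b)$ equals the sum of $1/i$ over $k+2-b\le i\le k-a$ minus the sum over $N-b+1\le i\le N-a$. A crude bound is $|D|\le\frac{b-a-1}{k+2-b}$, and I would sharpen it by pairing terms as in Lemma~\ref{l:b1}. The expectation is that $1-\rho$ is of order $\frac{b-a}{(N-a)(N-b)}$ while $\rho\,\varepsilon$ is of order $\frac{1}{N-b}$. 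The inequality therefore reduces to a quadratic condition on how far $k$ and $b$ can sit from $N$. Since $k\ge I$ and $b\le I+1$, the worst case is $b=I+1$, which leaves a quantity of size about $N-I$. The hypothesis $I\le N-\sqrt{N+5/4}+\frac12$ is equivalent to $(N-I)^2-(N-I)\ge N+1$ (complete the square: $(N-I-\frac12)^2\ge N+\frac54$). I would aim to reduce the worst case exactly to this inequality. If the crude bound on $|D|$ is not sharp enough, I would instead compare the two uncanceled blocks term by term against the corresponding terms of $\varepsilon$ weighted by $\rho$. I expect the care needed here, choosing which extremal $(a,b)$ to treat and keeping the bounds tight enough to land exactly on the simply explicable threshold, to be the hardest part.
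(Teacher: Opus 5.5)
Your Steps 1 and 2 match the paper's setup. Your $\rho$ is the paper's $\mathcal{R}$ and your $\varepsilon$ is its $\mathcal{D}$. Your reduction to $|D(a,b)|\le\rho\,(|D(a,b)|+\varepsilon)$, which is trivial when $\rho\ge1$, is the paper's inequality (\ref{e:tk}).

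\textbf{The gap is Step 3.} The inequality $(1-\rho)|D(a,b)|\le\rho\,\varepsilon$ for $\rho<1$ is never proved, only sketched, and the sketch targets the wrong mechanism. Simple explicability is not what controls $|D(a,b)|$, which depends on $k$. It is what shows that the case $\rho<1$ never arises. Multiplying out your formula for $\rho$ gives
\[ 1-\frac{1}{\rho}=\frac{(b-a-1)\,\gamma}{(N+1-a)(N+1-b)(b-1)},\qquad \gamma=(N+1-a-b)(N+1)+a(b-1). \]
So the sign of $\rho-1$ is the sign of $\gamma$, which involves neither $k$ nor the harmonic sums. Your heuristic that $1-\rho$ is of order $\frac{b-a}{(N-a)(N-b)}$ drops the factor $\gamma/(b-1)$, and that factor is exactly what carries the sign. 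Bounding $|D|$ does not help either. At $k=I$, $b=I+1$ it equals $H_{I-a}-(H_{N-a}-H_{N-I-1})$ and need not be small, while $\varepsilon\approx 2/(N-I)$. The argument therefore cannot close without this sign computation, which your plan never reaches.

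\textbf{How to close it.} Write $\gamma=(N+1-b)(N+1)-a(N+2-b)$. This is decreasing in $a$, so over $1\le a\le b-2$ its minimum is at $a=b-2$. With $c=N+1-b$, that minimum equals $c^2+3c+1-N$. Since $b\le j\le I+1$, we have $c\ge N-I$. The hypothesis is equivalent to $(N-I+\tfrac12)^2\ge N+\tfrac54$, that is, $(N-I)^2+(N-I)\ge N+1$. Hence $\gamma\ge 2(N-I)+2>0$, so $\rho>1$, and your trivial case finishes the proof.

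The paper reaches $\mathcal{R}>1$ in Corollary~\ref{c:r1} through the same sign analysis of $\gamma(\ell,s)$. Note also that your restatement of the hypothesis has a sign slip: it is $(N-I)^2+(N-I)\ge N+1$, not $(N-I)^2-(N-I)\ge N+1$.
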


First, note that the ratio of $\dot\sigma_{J2-1}(k) \dot\sigma_{J1+1}(k+1)$ to $\dot\sigma_{J1}(k) \dot\sigma_{J2}(k+1)$ is
\[ \frac{(J1-1)! (J2-1)! (N+1-J1)(N+1-J2)}{(J2-2)!(J1)! (N+2-J2)(N-J1)} = \frac{(J2-1)(N+1-J1)(N+1-J2)}{J1 (N-J1)(N+2-J2)}. \]
Denote this quantity by $\mathcal{R}$.

The sum of the corresponding reciprocal sums is
    \[ \left( \sum_{i=k+2-J2}^{N-J2+1} \frac{1}{i} - \sum_{i=k+1-J1}^{N-J1-1} \frac{1}{i} \right) + \left( \sum_{i=k+1-J1}^{N-J1} \frac{1}{i} - \sum_{i=k+2-J2}^{N-J2} \frac{1}{i} \right) \]
    \[ = \left( \sum_{i=k+2-J2}^{N-J2} \frac{1}{i} + \frac{1}{N-J2+1} - \sum_{i=k+1-J1}^{N-J1} \frac{1}{i} + \frac{1}{N-J1} \right) + \left( \sum_{i=k+1-J1}^{N-J1} \frac{1}{i} - \sum_{i=k+2-J2}^{N-J2} \frac{1}{i} \right) \]
which is simply $\frac{1}{N-J2+1} + \frac{1}{N-J1} = \frac{2N+1-(J1+J2)}{(N-J1)(N+1-J2)}$.  Denote this quantity by $\mathcal{D}$.  
    
Then we obtain
\[ \beta(J2-1,J1+1) = \dot\sigma_{J1}(k) \dot\sigma_{J2}(k+1) \mathcal{R} \left(\mathcal{D} - \left( \sum_{i=k+1-J1}^{N-J1} \frac{1}{i} - \sum_{i=k+2-J2}^{N-J2} \frac{1}{i} \right) \right). \]

Now, assume that $\beta(J1,J2)$ is negative.  Then $J1 < J2-1$ by Lemma~\ref{l:b1} and, contrapositively, $\beta(J2-1, J1+1)$ will be positive as $(J2-1) > (J1+1)-1$.  Also, if $1 \leq J1, J2 \leq j$ with $J1 + J2 = \ell+2$ so $J1$ and $J2$ are valid parameters, then since $J1 < J2-1$ we have $1 \leq J2-1, J1+1 \leq j$ and the sum is preserved so $J2-1$ and $J1+1$ are valid parameters as well.  Thus, our goal
\[ \beta(J2-1,J1+1) > -\beta(J1,J2) \]
is equivalent to showing that $\beta(J2-1,J1+1) + \beta(J1,J2)$ 
\[ = \dot\sigma_{J1}(k) \dot\sigma_{J2}(k+1) \left[ \mathcal{R} \left(\mathcal{D} - \left( \sum_{i=k+1-J1}^{N-J1} \frac{1}{i} - \sum_{i=k+2-J2}^{N-J2} \frac{1}{i} \right)\ \right) + \left(\sum_{i=k+1-J1}^{N-J1} \frac{1}{i} - \sum_{i=k+2-J2}^{N-J2} \frac{1}{i}\right) \right]   \]
is positive.

Hence, it suffices to show $\mathcal{R} \mathcal{D} > (\mathcal{R} - 1) \left( \sum_{i=k+1-J1}^{N-J1} \frac{1}{i} - \sum_{i=k+2-J2}^{N-J2} \frac{1}{i} \right)$.  Dividing by $\mathcal{R}$, which is always a positive quantity, this is equivalent to 
\begin{equation}\label{e:tk}
\mathcal{D} > (1 - 1/\mathcal{R}) \left( \sum_{i=k+1-J1}^{N-J1} \frac{1}{i} - \sum_{i=k+2-J2}^{N-J2} \frac{1}{i} \right). 
\end{equation}

We will return to this inequality at the end of this section to conclude the argument.  It follows easily when $\mathcal{R} \geq 1$ (equivalently, $(1 - 1/\mathcal{R}) \geq 0$), but we run into difficulties otherwise.  So, the remainder of this section is devoted to showing that the case $\mathcal{R} < 1$ does not arise whenever the game is simply explicable.

Define
\[ s = J2 - J1 - 1 \]
representing the ``spread'' between the parameters $J1$ and $J2$, which will be nonnegative by our assumption that $\beta(J1,J2)$ is negative and Lemma~\ref{l:b1}.  From our setup in (\ref{e:cp}), we have
\[ \ell = J1 + J2 - 2. \]
Observe that as $J1$ increases by $1$, we decrease $J2$ by the same amount to preserve $\ell$, so $s$ decreases by $2$.  Hence, when $\ell$ is even, the smallest positive value of $s$ is $1$, otherwise the smallest positive value of $s$ is $2$.  Together, these equations are a change of variables from $(J1,J2)$ to $(\ell, s)$ that we sometimes employ.

Simplifying $1-1/\mathcal{R}$, we obtain
    % wolfram alpha:  1 - 1/((J2 - 1)(N+1-J1)(N+1-J2) / ((J1)*(N-J1)(N+2-J2)))
    %\[ \frac{J1^2 - J1 J2 + J1}{(J2-1)(J1-J2)(N+1-J1)} + \frac{J1^2 - J1 J2 + J1}{(J2-1)(J2-J1)(N+1-J2)} - \frac{J1-J2+1}{J2-1} \]
    %check:  wolfram:  factor 1 - 1/((J2 - 1)(N+1-J1)(N+1-J2) / ((J1)*(N-J1)(N+2-J2)))
    %  and wolfram: J1 * J2 - J1 * N - 2*J1-J2*N-J2+(N+1)^2 - ((N+1-J1-J2)*(N+1) + J1*(J2-1))     check this is zero.
\[ 1-1/\mathcal{R} = \frac{ (J2-J1-1)\left[ (N+1-(J1+J2))(N+1) + J1(J2-1) \right]}{(N+1-J1)(N+1-J2)(J2-1)}. \]
Considering each factor, we find that $1-1/\mathcal{R}$ is negative if and only if 
\[ 0 > (N+1-(J1+J2))(N+1) + J1(J2-1) = (N-1-\ell)(N+1) + \frac{1}{4}\left( (\ell+1)^2 - s^2 \right), \]
and we let $\gamma(\ell, s)$ denote this quantity.

\begin{definition}
In order for $\gamma(\ell, s)$ to be negative, we must have that $(N-1-\ell)$
is negative, so we also have 
\[ \ell > N-1 \geq I+1 \geq j. \]
For each fixed $\ell > j$, we find that $s$ is as large as possible when $J2 =
j$ and $J1 = \ell - j + 2$.  

Let $s_{\mathsf{max}}(\ell) = 2j - \ell - 3$ denote this largest possible value
of $s = J2 - J1 - 1$ (for a given value of $\ell > j$).
\end{definition}

\begin{lemma}
We find that extreme values of $\gamma$ follow an arithmetic progression:
\[ \gamma(\ell-1, s_{\mathsf{max}}(\ell-1)) = \gamma(\ell, s_{\mathsf{max}}(\ell)) + \delta \]
where $\delta = N-j+2$ (which does not depend on $\ell$ nor $s$).  

Also, for the other values of $2 \leq s \leq s_{\mathsf{max}}(\ell)$, we have
\[ \gamma(\ell, s-2) = \gamma(\ell, s) + (s-1). \]

Hence for fixed $\ell$, whenever $\gamma(\ell, s_{\mathsf{max}}(\ell))$ is positive, so are all the other $\gamma(\ell, s)$ values.
\end{lemma}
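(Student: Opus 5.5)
This is a direct computation from the closed form
\[ \gamma(\ell,s) = (N-1-\ell)(N+1) + \tfrac14\left((\ell+1)^2 - s^2\right), \]
which follows from the change of variables $J1+J2=\ell+2$, $J2-J1-1=s$. Under this substitution $J1 = (\ell+1-s)/2$ and $J2-1 = (\ell+1+s)/2$, so $J1(J2-1) = \frac14\big((\ell+1)^2-s^2\big)$. We then compare values of this expression along the two relevant directions.

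\textbf{Second identity.} Fix $\ell$; the first term does not depend on $s$, so
\[ \gamma(\ell,s-2)-\gamma(\ell,s) = \tfrac14\left(s^2-(s-2)^2\right) = \tfrac14(4s-4) = s-1. \]
Since $s$ and $s-2$ have the same parity, the step $s\mapsto s-2$ stays within the admissible values for fixed $\ell$, as noted when the change of variables was introduced.

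\textbf{First identity.} Substitute $s_{\mathsf{max}}(\ell)=2j-\ell-3$, so that $\ell+1+s_{\mathsf{max}} = 2j-2$ and $\ell+1-s_{\mathsf{max}} = 2\ell-2j+4$. Then
\[ \tfrac14\left((\ell+1)^2-s_{\mathsf{max}}^2\right) = (j-1)(\ell-j+2), \]
which is just $J2-1=j-1$ times $J1=\ell-j+2$. Hence
\[ \gamma(\ell,s_{\mathsf{max}}(\ell)) = (N-1-\ell)(N+1) + (j-1)(\ell-j+2). \]
Replacing $\ell$ by $\ell-1$ increases the first term by $N+1$ and decreases the second by $j-1$. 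The net change is $N+1-(j-1)=N-j+2=\delta$, independent of $\ell$ and $s$.

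\textbf{Monotonicity conclusion.} Fix $\ell$ and suppose $\gamma(\ell,s_{\mathsf{max}}(\ell))>0$. Every admissible $s$ for this $\ell$ has the same parity as $s_{\mathsf{max}}(\ell)$ and satisfies $s\le s_{\mathsf{max}}(\ell)$, so it is reached by repeated steps $s\mapsto s-2$. Each such step adds $s-1$, which is $\ge 1>0$ whenever $s\ge 2$. Therefore $\gamma$ only increases as $s$ descends from $s_{\mathsf{max}}$, and every $\gamma(\ell,s)$ with $s$ in range is positive.

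The only point needing care is the range bookkeeping: the statement restricts to $2\le s\le s_{\mathsf{max}}(\ell)$, which keeps all increments nonnegative. No real obstacle arises; the substance is confirming that $s_{\mathsf{max}}$ corresponds to $J2=j$, $J1=\ell-j+2$, so that the factorization $(j-1)(\ell-j+2)$ holds.
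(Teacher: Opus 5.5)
Your proof is correct and follows the paper's route: a direct computation from the closed form of $\gamma(\ell,s)$, with the conclusion for fixed $\ell$ obtained by iterating the step identity $\gamma(\ell,s-2)=\gamma(\ell,s)+(s-1)$ downward from $s_{\mathsf{max}}(\ell)$. The only cosmetic difference is in the first identity. You first factor $\gamma(\ell,s_{\mathsf{max}}(\ell))=(N-1-\ell)(N+1)+(j-1)(\ell-j+2)$, recognizing the second term as $J1(J2-1)$ at $J2=j$, and then take the difference. The paper instead expands the difference of squares directly.
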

\begin{proof}
    The second claim follows directly from the formula for $\gamma$ above.  The third claim follows directly from the second claim.
    For the first claim, consider $\gamma(\ell-1, s_{\mathsf{max}}(\ell-1)) - \gamma(\ell, s_{\mathsf{max}}(\ell))$.  By our formulas this is,
    \[ 
        (N-\ell)(N+1) + \frac{1}{4}\left( (\ell)^2 - (2j - \ell - 2)^2 \right) - \left((N-1-\ell)(N+1) + \frac{1}{4}\left( (\ell+1)^2 - (2j - \ell - 3)^2 \right)\right)
    \]
    \[ = N+1 + \frac{1}{4}\left( (-2\ell -1) - (4j -2\ell - 5)\right) = N+1 -j + 1 = \delta \]
    as desired.
    \begin{comment}
    Type
    \begin{verbatim}
    (N-1-L)*(N+1)+(1/4)*( (L+1)^2 - (S)^2 ) - 
      (   (N-1-(L))*(N+1)+(1/4)*( ((L)+1)^2 - (S-2)^2 )   )
    \end{verbatim}

    and

    \begin{verbatim}
    (N-1-L)*(N+1)+(1/4)*( (L+1)^2 - (2*J-L-3)^2 ) - 
      (   (N-1-(L-1))*(N+1)+(1/4)*( ((L-1)+1)^2 - (2*J-(L-1)-3)^2 )   )
    \end{verbatim}

    into Wolfram alpha.
    \end{comment}
\end{proof}

\begin{corollary}\label{c:r1}
Suppose the game is simply explicable, so 
\[ I \leq N - \sqrt{N+\frac{5}{4}} + \frac{1}{2}. \]
Then, $\mathcal{R} > 1$.
\end{corollary}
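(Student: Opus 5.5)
The plan is to reduce $\mathcal{R} > 1$ to positivity of $\gamma(\ell,s)$ and then use the arithmetic-progression lemma to check only one extreme value of $\gamma$.

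\textbf{Reduction.} By the factorization of $1-1/\mathcal{R}$ above, the denominator $(N+1-J1)(N+1-J2)(J2-1)$ is positive. We only care about terms with $\beta(J1,J2)<0$, and there Lemma~\ref{l:b1} gives $s = J2-J1-1 \geq 1$. So $1-1/\mathcal{R}>0$, i.e.\ $\mathcal{R}>1$, is equivalent to $\gamma(\ell,s) > 0$ for every admissible pair $(\ell,s)$ with $s\geq 1$.

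\textbf{Easy range $\ell \leq N-1$.} Here $(N-1-\ell)(N+1)\geq 0$. Also $s = J2-J1-1 < J1+J2-1 = \ell+1$, so $(\ell+1)^2 - s^2 > 0$. Hence $\gamma>0$ automatically, and it remains to treat $\ell \geq N > j$, where $s_{\mathsf{max}}(\ell)$ is defined.

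\textbf{Large $\ell$.} For fixed $\ell$, the preceding lemma says it suffices to show $\gamma(\ell, s_{\mathsf{max}}(\ell))>0$. The same lemma shows that these extreme values increase by $\delta = N-j+2>0$ each time $\ell$ decreases by one. So they are smallest at the largest $\ell$ for which some admissible $s\geq 1$ exists, and it is enough to check that single worst case.

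Since $s_{\mathsf{max}}(\ell) = 2j-\ell-3 \geq 1$, we get $\ell \leq 2j-4 \leq 2I-2$, using $j\leq I+1$. The extreme case is therefore $\ell = 2j-4$ with $s=1$; if parity forces $s=2$, then $\ell = 2j-5$. Because $\gamma$ decreases in $s$ and the $\ell$-progression dominates, I would bound both cases by the value of
\[ \gamma(\ell,1) = (N-1-\ell)(N+1) + \tfrac14(\ell^2+2\ell) \]
at $\ell \leq 2I-2$.

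\textbf{The quadratic check.} Multiplying by $4$, $\gamma(\ell,1)>0$ becomes $\ell^2 - (4N+2)\ell + 4N^2-4 > 0$. The roots are $\ell = 2N+1 \pm \sqrt{4N+5}$, so the inequality holds for all $\ell < 2N+1-2\sqrt{N+\tfrac54}$. The simply explicable hypothesis $I \leq N - \sqrt{N+\frac54}+\frac12$ gives
\[ 2I-2 \leq 2N - 2\sqrt{N+\tfrac54} - 1 < 2N+1-2\sqrt{N+\tfrac54}, \]
so $\gamma>0$ at the worst case and hence everywhere. Therefore $\mathcal{R}>1$.

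\textbf{Main obstacle.} The delicate step is the bookkeeping of the worst case. I need to confirm that, for each $\ell$, the admissible $s$ (of the right parity and at least $1$) really is bounded by $s_{\mathsf{max}}(\ell)$ with $J2\leq j$. I also need the $s=2$ parity case to be dominated by the $s=1$ computation, or else to verify it directly; its threshold is only easier. Finally, I must make sure the constraint $j\leq I+1$ also covers the boundary case $k=I$, which was set aside earlier via Remark~\ref{r:agree}.
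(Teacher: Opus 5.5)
Your proof is correct and follows the paper's route. Both arguments reduce $\mathcal{R}>1$ to $\gamma(\ell,s)>0$ for $s\ge 1$, pass to $s_{\mathsf{max}}(\ell)$ and the $\delta$-progression, and finish with the quadratic formula. The difference is bookkeeping: you anchor directly at the true worst case $(\ell,s)=(2j-4,1)$, i.e.\ $(J1,J2)=(j-2,j)$, whereas the paper anchors at $(2j-3,0)$ and uses the looser bound $\ell\le 2j-2$, so your inequality holds with room to spare. Your parity worry is moot, since $s_{\mathsf{max}}(\ell)\equiv \ell+1 \pmod 2$ always.
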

\begin{proof}
To obtain an initial condition for the arithmetic progression, consider $\ell = 2j-3$ so we have $s_{\mathsf{max}} = 0$.  Then, $\gamma(\ell, s_{\mathsf{max}}) =  (N-1-\ell)(N+1) + \frac{1}{4}(\ell+1)^2$, and suppose this value is negative.  Dividing $\gamma(2j-3, 0)$ by $\delta = N-j+2$ then gives the (negative of the) number of times that $\ell$ can be decreased before $\gamma$ is guaranteed to be positive.  We obtain
\[ \frac{ (N-1-(2j - 3))(N+1) + \frac{1}{4}((2j-3)+1)^2 } { N-j + 2}   = -\left( \frac{(j-1)}{\delta} - \delta + 1 \right) \]
by direct computation\footnote{Expanding and collecting terms, we have:
\[ (-8jN-8j+4N^2+12N+8) + (4j^2 - 8j + 4)  = 4j^2 - 8 jN - 16 j + 4N^2 + 12 N + 12, \]
\[ 4(\delta-j )(N+1) + ((2j-3)+1)^2   = -4(j-1) + 4 \delta^2 - 4 \delta,  \]
\[ (\delta-j )(N+1) + \frac{1}{4}((2j-3)+1)^2   = -\left( (j-1) - \delta^2 + \delta \right). \]}.

\begin{comment}
Things to type into Wolfram alpha:
\begin{verbatim}
( (N-1-(2J- 3))(N+1) + (1/4)*((2J-3)+1)^2 ) / (N-J + 2)
\end{verbatim}

\begin{verbatim}
2J-3-(J-1)/(N-J+2) + (N-J+2) - 1
\end{verbatim}

\begin{verbatim}
(J-1)/(J-N-2) + J + N - 2 >= 2J-2 solve for J
\end{verbatim}

Empirically, this is tight.  
\end{comment}

Hence, whenever 
\[ \ell < (2j-3)-\left( \frac{(j-1)}{\delta} - \delta + 1 \right) = \frac{j-1}{j-N-2} + j + N - 2 \]
we have that $\gamma(\ell, s) > 0$ for all $s > 0$.
Since $\ell \leq 2j-2$, we can solve
\[ 2j-2 \leq \frac{j-1}{j-N-2} + j + N - 2 \]
%\[ (2j-2)(j-N-2) \leq j-1 + (j + N - 2)(j-N-2) \]
for $j$ in terms of $N$, using the quadratic formula.  We find that whenever
\[ j \leq N - \sqrt{N+\frac{5}{4}} + \frac{3}{2}, \]
we have $\gamma(\ell, s) > 0$ for all $\ell$ and $s > 0$.  This holds for all
$j \leq I+1$ by our hypothesis, so $1-1/\mathcal{R}$ is positive, and $\mathcal{R} > 1$.
\end{proof}

\begin{proof}{\bf (of Lemma~\ref{l:cancel})}
    Since our game is simply explicable, we have
    $\mathcal{R} > 1$ by Corollary~\ref{c:r1}.  So, we can obtain (\ref{e:tk}) easily since $\mathcal{D}$ is positive yet 
    \[ \left( \sum_{i=k+1-J1}^{N-J1} \frac{1}{i} - \sum_{i=k+2-J2}^{N-J2} \frac{1}{i} \right) \]
    is negative as in the proof of Lemma~\ref{l:b1}.  Multiplying each of these by $(\mathcal{R}-1)$ and adding $\mathcal{D} > 0$ to each side produces the desired result.
\end{proof}

Consequently, in our parameter regime all of the negative terms in Equation~(\ref{e:cp}) end up being canceled, and so each coefficient of $x^\ell (1-x)^{2I-\ell}$ is positive.  This completes the proof of $k$-convexity, fulfilling the requirement of Theorem~\ref{t:reduction}(4).

\begin{remark}
Although $j$-convexity (Theorem~\ref{t:jconvex}) is primarily a property of $\sigma$ and $\tau^\circ$, in the sense that it holds when we replace $R_J$ by any increasing nonnegative vector, this is {\em not} true of $k$-convexity.  For example, when $N = 16$, $I = 6$, and $j = 7$, we have 
\[ \sigma_J(k = 9) - \tau^\circ_J(k = 9) = (7057428, 3569496, 1614984, 613452, 163380, 1848, -59688), \text{ and } \]
\[ \sigma_J(k = 8) - \tau^\circ_J(k = 8) = (47227776, 18671808, 5751936, 844320, -462720, -477504, -300816). \]
Dotting these with $(1, 1, 1, 1, 1, 1, 225)$ produces a positive result for $k = 8$ and a negative result for $k = 9$, which would violate $k$-convexity.  Of course, $(1, 1, 1, 1, 1, 1, 225)$ is not a legitimate $R_J$ vector.  This example shows that incorporating the essential information from the $R_J$ vector, as we have done in constructing our ``dot'' basis, is necessary in order to prove $k$-convexity.
\end{remark}

\begin{remark}
When $\mathcal{R} < 1$, we find that (\ref{e:tk}) continues to hold for many small parameter values.  However, there are examples such as $N = 9116$, $I = 9100$, $j = 9101$, $k = 9102$ and $(J1, J2) = (8982, 9101)$, for which
    \[ \mathcal{D} \approx  0.0699626865671642, \ \  (1 - 1/\mathcal{R}) \approx -0.0409542124542125, \ \text{ and } \]
    \[ \left( \sum_{i=k+1-J1}^{N-J1} \frac{1}{i} - \sum_{i=k+2-J2}^{N-J2} \frac{1}{i} \right) \approx -1.70831511339475, \]
so the inequality (\ref{e:tk}) is not satisfied:  ${\bf 0.06996268}65671642 \not > {\bf 0.06996270}00927106$.

Thus, for games that are not simply explicable, it may be possible that (\ref{e:cp}) is still always positive by some more complicated argument involving canceling the negative beta term by more than one positive term, or by a different positive term than the one that we used when $R > 1$.  Alternatively, it is possible that all games are $k$-convex but that using the ratios $\frac{\dot\tau^\circ_J(k) \cdot \dot{R}_J(x)}{\dot\sigma_J(k) \cdot \dot{R}_J(x)}$ that led to (\ref{e:cp}) is not always a way to discern this.  Finally, it may be the case that (some of) these games are not $k$-convex at all.  It remains an open problem to investigate these games further.
\end{remark}

%%%%%%%%%%%%%%%%%%%%%%%%%%%%%%%%%%%%%%%%%%%%%%%%%%%%%%%%%%%%%%%%%%%%%
%  Section
%%%%%%%%%%%%%%%%%%%%%%%%%%%%%%%%%%%%%%%%%%%%%%%%%%%%%%%%%%%%%%%%%%%%%
\bigskip
\section{Solutions and applications}\label{s:6}

\subsection{Specifics}\label{s:themain}
We can now use the results we've developed to solve for the optimal strategy in our 1-step reselect games.

\begin{definition}
Say that a prefix of length $k$ is {\bf winnable} if it ends in a left-to-right maximum.  That is, the entry with value $k$ must occur in position $k$.
\end{definition}

From the rules of the game, we never win if we accept a prefix that is not winnable so we must restrict our attention to the winnable prefixes.

\begin{definition}
Let $\dot\Delta_J(k,j)$ be $\dot\sigma_J(k) - \dot\tau^\circ_J(k)$ if $J \leq j$, or zero if $J > j$.
\end{definition}

This $\dot\Delta_J$ function contains the essential strategic information about the game:  Whenever the dot product $\dot\Delta_J(k,j) \cdot \dot{R}_J(x) \geq 0$, a prefix with parameters $(k,j)$ is positive (in the strategic sense:  that it is optimal to accept it), whereas $\dot\Delta_J(k,j) \cdot \dot{R}_J(x) < 0$ implies that a prefix with parameters $(k,j)$ is strategically negative.  The convexity results from Section~\ref{s:5} imply that the set of $(k,j)$ that are strategically positive forms a geometrically convex subset of $\{1, \ldots, N\} \times \{1, \ldots, I+1\}$.

We can put our results together as follows to obtain a fast formula for computing strategies and win probabilities in any simply explicable 1-step reselect game.

\begin{theorem}\label{t:themain}
Suppose the parameters of our game satisfy $I \leq N - \sqrt{N+\frac{5}{4}} +
\frac{1}{2}$, and consider each prefix $p$, of length $k$ and $J$-statistic $j$,
say.  When $k > I$, we have that $\dot\Delta_J(k, j) \cdot \dot{R}_J(x)$ is
\[ \left\langle {N \choose {J-1}} \frac{(N-J)!}{(k-J)!}  \left( 1-\sum_{i=k+1-J}^{N-J} \frac{1}{i} \right)\right\rangle_{J=1}^j \cdot \left\langle x^{J-1}(1-x)^{I-(J-1)} \right\rangle_{J=1}^j. \]

When $k \leq I$, we have that $\dot\Delta_J(k) \cdot \dot{R}_J(x)$ is
\[ \left\langle \frac{1}{(k-1)!} {N \choose {J-1}} \frac{(N-J)!(I-1)!}{(I-J)!}  \left( 1-\sum_{i=k}^{I-1} \frac{1}{i}-\sum_{i=I+1-J}^{N-J} \frac{1}{i} \right)\right\rangle_{J=1}^I \cdot \left\langle x^{J-1}(1-x)^{I-(J-1)} \right\rangle_{J=1}^I \]
\[ - \frac{1}{(k-1)!} \frac{N!}{I (N-I)} x^I. \]

The optimal strategy is to reject candidates until we encounter a winnable
prefix for which $\dot\Delta_J(k, j)\cdot\dot{R}_J(x)$ is positive for the
first time; then accept the current candidate (at position $k$ from the prefix
with $J$-statistic $j$).
\end{theorem}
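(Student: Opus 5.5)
The proof assembles earlier results; the only new work is an explicit computation in the dot basis. I will split into the cases $k>I$ and $k\le I$, write $\dot\Delta_J=\dot\sigma_J-\dot\tau^\circ_J$ using the closed formulas of Theorems~\ref{t:rebasis} and \ref{t:rebasis_e}, and dot against $\dot R_J(x)=x^{J-1}(1-x)^{I-(J-1)}$. By the discussion in Subsection~\ref{ss:dot}, this product equals $N!\,(S(p)-T^\circ(p))$. Since $N!>0$, its sign is the sign of $S(p)-T^\circ(p)$, i.e.\ it tells whether $p$ is $T^\circ$-positive.

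For $k>I$, Theorem~\ref{t:rebasis} gives
\[ \dot\sigma_J(k)-\dot\tau^\circ_J(k)={N\choose J-1}\frac{(N-J)!}{(k-J)!}\Big(1-\sum_{i=k+1-J}^{N-J}\frac1i\Big), \]
and by Remark~\ref{r:dotfold} the entries with $J>j$ vanish, so the sum truncates at $j$. This is the first displayed formula.

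For $k\le I$, I substitute $\dot\tau^\circ_J(k)=\frac{(I-1)!}{(k-1)!}\dot\tau^\circ_J(I)+\dot\sigma_J(k)\sum_{i=k}^{I-1}\frac1i$. When $J\le I$, Remark~\ref{r:agree} gives
\[ \frac{(I-1)!}{(k-1)!}\dot\tau^\circ_J(I)=\frac{(I-1)!}{(k-1)!}\,{N\choose J-1}\frac{(N-J)!}{(I-J)!}\sum_{i=I+1-J}^{N-J}\frac1i=\dot\sigma_J(k)\sum_{i=I+1-J}^{N-J}\frac1i . \]
The last equality holds because $\dot\sigma_J(k)=\frac{(I-1)!}{(k-1)!}\dot\sigma_J(I)$. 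Factoring out $\dot\sigma_J(k)$ produces the bracket $1-\sum_{i=k}^{I-1}\frac1i-\sum_{i=I+1-J}^{N-J}\frac1i$.

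When $J=I+1$, we have $\dot\sigma_{I+1}(k)=0$, and the coordinate is $-\frac{(I-1)!}{(k-1)!}\cdot\frac{N!}{I!(N-I)}=-\frac{1}{(k-1)!}\frac{N!}{I(N-I)}$. It multiplies $\dot R_{I+1}=x^I$, which gives the separate subtracted term. Throughout this step I must keep careful track of the factorial constants; that bookkeeping is the only real risk in the formulas.

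For the strategy claim, I use the hypothesis $I\le N-\sqrt{N+5/4}+1/2$, i.e.\ the game is simply explicable. Theorem~\ref{t:jconvex} gives $j$-convexity. The lemma for $k<I$ and Lemma~\ref{l:cancel} (with Corollary~\ref{c:r1}) give $k$-convexity, since every coefficient of (\ref{e:cp}) is nonnegative. Using Lemma~\ref{l:pj} as in Section~\ref{s:red}, these combine into $T^\circ$-convexity along every path of the prefix tree. Theorem~\ref{t:reduction}(4) then gives strategic convexity. Theorem~\ref{t:reduction}(3) shows that the $T^\circ$-boundary coincides with the true $S^\circ$-boundary, so $T^\circ$-sign equals true sign at the first positive prefix. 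Finally, Theorem~\ref{t:rnap} says it is optimal to accept the first positive prefix. Non-winnable prefixes have $S=0$ and so are never accepted, which restricts attention to winnable prefixes.

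The main obstacle is the $k=I$ boundary case, where $\dot\tau^\circ_{I+1}(I)$ takes an exceptional value, together with checking that (\ref{e:cp}) stays nonnegative there. The paper asserts the latter holds when $J1=I+1$; I would verify it directly by computing the extra term from the explicit value $\frac{N!}{I!(N-I)}$.
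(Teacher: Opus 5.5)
Your proposal is correct and follows essentially the same route as the paper. The displayed formulas come from substituting Theorems~\ref{t:rebasis} and \ref{t:rebasis_e}; your handling of the exceptional coordinate $J=I+1$, which yields $-\frac{1}{(k-1)!}\frac{N!}{I(N-I)}x^I$, is right and fills in algebra the paper leaves implicit. The strategy claim uses the same chain as the paper: $j$-convexity and $k$-convexity give $T^\circ$-convexity, and then Theorem~\ref{t:reduction}(4), Theorem~\ref{t:reduction}(3) and Theorem~\ref{t:rnap} finish it. The $k=I$, $J1=I+1$ check you flag is immediate: since $\dot\sigma_{I+1}(I)=0$, that term of the coefficient of $x^\ell(1-x)^{2I-\ell}$ is just $\frac{N!}{I!(N-I)}\,\dot\sigma_{J2}(I+1)\geq 0$.
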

\begin{proof}
    We use Theorems~\ref{t:rebasis} and \ref{t:rebasis_e} to write the explicit formulas and follow the outline given at the end of Section~\ref{s:red}.  By Theorem~\ref{t:jconvex} and the results from Section~\ref{s:kconvex}, we find that our simply explicable 1-step reselect game is $j$-convex and $k$-convex, so it is $T^\circ$-convex.  According to Theorem~\ref{t:reduction}(4), this means it is strategically convex with respect to $S^\circ$ as well.  By Definition~\ref{d:stratconv}, this means there is a unique transition along any path of prefixes through the game tree from being strategically negative (for which it is optimal to reject) to strategically positive (for which it is optimal to accept).  Since $T^\circ$ agrees with $S^\circ$ from the strategic boundary to the end of the game by Theorem~\ref{t:reduction}(3), the strategy we have given based on $\dot\Delta_J(k, j)$ is optimal by Theorem~\ref{t:rnap} (even though we use $T^\circ = \frac{1}{N!} \left(\dot\tau_J^\circ \cdot \dot{R}_J\right)$ in place of $S^\circ$).
\end{proof}

\begin{lemma}\label{l:muwin}
The number of winnable prefixes with parameters $(k,j)$ is
\[
    \mu(k,j) := \begin{cases}
        \frac{(k-I-1) I! (k-j-1)!}{(I-j+1)!} & \text{ if $k > I+1$, } \\
        0 & \text{ if $k = I+1$ and $j \leq I$, } \\
        (k-1)! & \text{ otherwise. } 
    \end{cases}
\]
\end{lemma}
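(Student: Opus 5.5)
The plan is to count directly. A winnable prefix of length $k$ is a permutation $p \in \S_k$ with $p_k = k$. Deleting that last entry gives a bijection with $\S_{k-1}$. I would therefore treat the three cases of the statement separately and, in the main case, imitate the block-counting argument from the proof of Lemma~\ref{l:mj}, applied to $\S_k$ in place of $\S_N$ with the extra constraint that $k$ sits in position $k$.

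First, when $k \leq I$, the $J$-statistic of the prefix is undefined (as in Remark~\ref{r:dotfold}). So $\mu(k,j)$ simply counts all winnable prefixes, which gives $(k-1)!$.

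Second, when $k = I+1$, the only position after $I$ is position $I+1 = k$, which holds the value $k = I+1$. This entry is automatically a bottom-to-top maximum, since every smaller value lies to its left. Hence $J(p) = I+1$ for every winnable prefix. The count is $0$ for $j \leq I$ and $I! = (k-1)!$ for $j = I+1$, which is the ``otherwise'' case.

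Third, the main case is $k > I+1$. Repeating the argument of Lemma~\ref{l:mj}, $J(p) = j$ holds exactly when the values $1, \ldots, j-1$ all lie in positions $\leq I$ and the value $j$ lies in some position after $I$. Since $j \leq I+1 < k$, the value $j$ cannot occupy position $k$, so it lies among positions $I+1, \ldots, k-1$. The counting then proceeds in three steps:
\begin{itemize}
\item choose the remaining $I-(j-1)$ values for the first $I$ positions from $\{j+1, \ldots, k-1\}$, in $\binom{k-1-j}{I-j+1}$ ways;
\item order the first $I$ positions arbitrarily, in $I!$ ways;
\item order the $k-1-I$ leftover values (including $j$) in positions $I+1, \ldots, k-1$, in $(k-1-I)!$ ways.
\end{itemize}
The product $\binom{k-j-1}{I-j+1} I!\,(k-1-I)!$ simplifies to $\frac{(k-I-1)\,I!\,(k-j-1)!}{(I-j+1)!}$, as claimed.

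The only delicate point is the characterization of $J(p)=j$ in the third case. One must confirm that ``values $1,\ldots,j-1$ in the first $I$ positions and $j$ after position $I$'' is both necessary and sufficient. Necessity is the contradiction argument from Lemma~\ref{l:mj}: the first small value appearing after position $I$ would itself be a bottom-to-top maximum. Sufficiency is that $j$ is then the first value exceeding position $I$ all of whose smaller values precede it. One must also check that the fixed entry $k$ in position $k$ never interferes; it does not, since $k > j$.
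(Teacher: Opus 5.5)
Your proposal is correct and is essentially the paper's argument: the cases $k \le I$ and $k = I+1$ are handled identically, and for $k > I+1$ the paper simply cites Lemma~\ref{l:mj} with $N = k-1$ (implicitly deleting the trailing entry $k$), whereas you re-run that lemma's block count directly inside $\S_k$ and get the same product $\binom{k-j-1}{I-j+1}\, I!\,(k-1-I)!$. Your direct count has the small advantage of not needing the check, which the paper leaves unstated, that removing the final entry $k$ does not change the $J$-statistic.
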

\begin{proof}
If $k \leq I$, the $J$-statistic is undefined, so $j$ plays no role.  The restriction that the prefix ends in a left-to-right maximum fixes one entry, and the other $k-1$ entries may be permuted arbitrarily.  If $k = I+1$, the last entry of the prefix is also the value of the $J$-statistic, so the prefix is winnable in this case only when $j = I+1$, resulting in the stated formulas.  If $k > I+1$, we may apply the formula from Lemma~\ref{l:mj} with $N = k-1$.
\end{proof}

\setcounter{MaxMatrixCols}{20}
\begin{example}\label{e:3010}
    For $N = 30$, $I = 10$, $x = 0.5$, we obtain the following division of prefix parameters into ``strategically positive'' (+) and ``strategically negative'' (-) by recording the signs of $\dot\Delta_J(k, j) \cdot \dot{R}_J(x)$ according to Theorem~\ref{t:themain}:
    \[ \scalebox{0.7}{ $\begin{matrix}
        k \backslash j & 1 & 2 & 3 & 4 & 5 & 6 & 7 & 8 & 9 & 10 & 11 \\
        1 & - & - & - & - & - & - & - & - & - & - & - \\
        2 & - & - & - & - & - & - & - & - & - & - & - \\
        \cdots & . & . & . & . & . & . & . & . & . & . & . \\
        10 & - & - & - & - & - & - & - & - & - & - & - \\
        11 & \emptyset & \emptyset & \emptyset & \emptyset & \emptyset & \emptyset & \emptyset & \emptyset & \emptyset & \emptyset & - \\
        12 & + & + & - & - & - & - & - & - & - & - & - \\
        13 & + & + & + & - & - & - & - & - & - & - & - \\
        14 & + & + & + & + & + & - & - & - & - & - & - \\
        15 & + & + & + & + & + & + & + & - & - & - & - \\
        16 & + & + & + & + & + & + & + & + & + & + & - \\
        17 & + & + & + & + & + & + & + & + & + & + & + \\
        18 & + & + & + & + & + & + & + & + & + & + & + \\
        \cdots & . & . & . & . & . & . & . & . & . & . & . \\
        29 & + & + & + & + & + & + & + & + & + & + & + \\
        \end{matrix}$ } \]

        Observe that this diagram gives a complete description of the optimal strategy for the given parameters.  To understand the $\emptyset$ notation, recall that values from row $k = I+1$ with $j \leq I$ have $\mu(k,j) = 0$ because there will be no winnable prefixes there.  Thus, no strategy will ever accept a prefix with these parameter values, regardless of the sign of $\dot\Delta_J(k,j) \cdot \dot{R}_J(x)$.

We could describe the strategy more concisely by just providing the threshholds for each value of $j$; for example, among prefixes with $j = 3$ we should reject the first $12$ candidates and accept the next best subsequent candidate.  Even more succinctly, we could provide the corner entries (rightmost ``+'' in each row, say) and infer the rest of the diagram using our convexity results.
\end{example}

We next consider how to compute the total probability of winning the game, following this optimal strategy. 

From Theorem~\ref{t:themain}, if any prefix of length $k \leq I$ is
strategically positive, then the optimal strategy does not depend on $j$ so is positional (as in the
classical uniform distribution game): it is always optimal to reject the first $k-1$
candidates and accept the next left-to-right maximum ranked candidate that
appears, where $k$ is the length of the smallest positive prefix.
In our setup, the simplest way to write the win probability for the optimal strategy in this case is 
\begin{equation}\label{e:mainprob_t}
 \frac{(k-1)!}{N!} \ \left( \dot\tau_J(k-1, I+1) \cdot \dot{R}_J(x) \right). 
\end{equation}
(We multiply by the total number of prefixes of size $k-1$ since we reject all of them, not just the winnable ones.  Also, note that $S^\circ$ and $T^\circ$ will diverge prior to length $k-1$, because $T^\circ$ does not have the $\max$ statements, so we cannot merely compute $\dot\tau_J(1, I+1) \cdot \dot{R}_J(x)$ though this quantity does serve as a lower bound by Theorem~\ref{t:reduction}(1).)

On the other hand, if no prefix of length $k \leq I$ is stategically positive, then we must use the following formula.

\begin{theorem}\label{t:themainp}
The probability of winning the game using the optimal strategy is 
\begin{equation}\label{e:mainprob_d}
 \sum_{\text{ positive } (k,j)} \frac{\mu(k,j)}{N!} \ \left( \dot\Delta_J(k, j) \cdot \dot{R}_J(x) \right) = \sum_{\substack{1 \leq k \leq N \\ 1 \leq j \leq I+1}} \frac{\mu(k,j)}{N!} \ \max\left( 0, \dot\Delta_J(k, j) \cdot \dot{R}_J(x) \right).
\end{equation}
\end{theorem}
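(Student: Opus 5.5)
The plan is to write the win probability as the sum, over all $\pi$ on which the optimal strategy wins, of $R_{J(\pi)}(x)$, and to regroup this sum by the first positive prefix that the strategy meets. Under the strategy of Theorem~\ref{t:themain}, each interview order $\pi$ meets a well-defined first winnable prefix whose parameters $(k,j)$ are positive; if no earlier candidate is accepted, the size-$N$ prefix guarantees such a stop. The strategy wins precisely when that stopping prefix $p$ has $\pi$ $p$-acceptable. So I would partition the winning event according to the stopping prefix $p$, and group the prefixes by their parameters $(k,j)$. Lemma~\ref{l:nop2} and the bijections $\f_{p,q}$ of Theorem~\ref{t:tos} show that the contribution depends only on $(k,j)$, and Lemma~\ref{l:muwin} counts $\mu(k,j)$ winnable prefixes in each class.

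The difficulty is that a positive $(k,j)$ may be preceded along a path by an earlier positive prefix, so summing $S(p)$ over all positive winnable $p$ would overcount. The subtraction $\dot\sigma-\dot\tau^\circ$ handles this, which I would verify by telescoping. For a positive prefix $p$, the term $S(p)-T^\circ(p)$ equals (acceptable $\pi$) $-$ (rejectable $\pi$) weighted by $R_J$. A $p$-rejectable $\pi$ is precisely one won by accepting the next left-to-right maximum after $p$. By strategic convexity (Theorem~\ref{t:reduction}(4), together with $j$- and $k$-convexity from Section~\ref{s:5}), every descendant of a positive prefix is positive, and that next left-to-right maximum is a positive winnable prefix. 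Consider a $\pi$ and the chain of positive winnable prefixes $p_{(1)},p_{(2)},\ldots$ that it passes through, namely its successive left-to-right maxima inside the positive region. Each $p_{(i)}$ other than the last contributes $+[\pi\ \text{acc. at } p_{(i)}]-[\pi\ \text{acc. at } p_{(i+1)}]$, and the last one contributes only its acceptance term. So the sum telescopes to $[\pi$ acceptable at $p_{(1)}]$, which is exactly the event that the strategy wins on $\pi$. Weighting by $R_{J(\pi)}(x)$ and summing therefore gives $\sum_{p\ \text{positive winnable}} (S(p)-T^\circ(p))$. Theorem~\ref{t:reduction}(3) ensures that $T^\circ=S^\circ$ there, although the telescoping identity uses only $T^\circ$.

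Next I would convert to the dot basis. Theorem~\ref{t:rebasis} and Theorem~\ref{t:rebasis_e}, together with the definition of $\dot\Delta$, give $N!(S(p)-T^\circ(p))=\dot\Delta_J(k,j)\cdot\dot R_J(x)$. Multiplying by the multiplicity $\mu(k,j)$ then yields the left-hand formula. For the right-hand equality, note that positive $(k,j)$ are exactly those with $\dot\Delta\cdot\dot R\ge0$, while negative ones contribute zero under the $\max$. Any $(k,j)$ with $\mu=0$, such as row $k=I+1$ with $j\le I$, is harmless in either form.

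The main obstacle is the telescoping step. The key point there is the claim that the next left-to-right maximum after a positive prefix is again positive, and this needs convexity along paths rather than merely $k$-monotonicity at fixed $j$. I would justify it using Lemma~\ref{l:pj}, which says $j$ weakly decreases along a path once $k>I$, combined with Theorem~\ref{t:jconvex} and the $k$-convexity results. For prefixes of size $k\le I$ I would apply the same argument using the simple recurrences of Remark~\ref{r:simple}. In the case where some prefix of length at most $I$ is positive, every winnable prefix from that length on is positive, and the telescoping remains valid.
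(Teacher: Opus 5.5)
Your proposal is correct and follows the paper's argument. Your telescoping is a per-$\pi$ version of the paper's observation that each $\pi$ is counted in $S$ only at the prefix ending at the value $N$, and in $T^\circ$ only at the prefix ending at its penultimate left-to-right maximum. By convexity along paths, $\sum_{\text{positive } p}\bigl(S(p)-T^\circ(p)\bigr)$ therefore counts exactly the won $\pi$, and both arguments then pass to the dot basis and group prefixes by $(k,j)$ with multiplicity $\mu(k,j)$.

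One caveat concerns your closing extension to the case where some prefix of length at most $I$ is positive, which the paper excludes (it uses (\ref{e:mainprob_t}) there instead). The telescoping identity still holds in that case. However, the displayed formula only comes out right if each row $k\le I$ is counted once, with its $(k-1)!$ winnable prefixes and the full vector $\dot\sigma_J(k)-\dot\tau^\circ_J(k)$ over all $J\le I+1$. The literal double sum with $\mu(k,j)=(k-1)!$ would instead count that row once for every $j$.
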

\begin{proof}
Our goal is to sum the $R_{J(\pi)}(x)$ contributions from the $\pi \in \S_N$
that are won using the optimal strategy, expressed in a ``+/- diagram'' as in
Example~\ref{e:3010}, say.  Recalling our definitions from
Sections~\ref{s:strike} and \ref{s:strat}, observe that each permutation $\pi
\in \S_N$ contributes a positive contribution to 
\[ S(p) = \sum\limits_{p\text{-acceptable } \pi \in \SN_N}  R_{J(\pi)}(x) \]
for a unique prefix $p$:  to construct it, we flatten the entries of
$\pi$ up to and including the position of the value $N$.  Similarly, each
permutation $\pi \in \S_N$ contributes a positive contribution to 
\[ T^\circ(q) = \sum\limits_{q\text{-rejectable } \pi \in \SN_N}  R_{J(\pi)}(x) \]
for a unique prefix $q$ by Definition~\ref{d:reject}.  To construct it,
we flatten the entries of $\pi$ up to and including the position of the
penultimate left-to-right maximum in $\pi$.

The crucial point is that by accepting a prefix $p$, we forgo the opportunity
to accept any subsequent prefix $q$ that contains $p$ as a prefix.  For
example, consider two permutations from $\S_N$ illustrated schematically,
\[ \pi =  [\underbrace{\ \_\ \_\ \_\ {\bf N}\ }_{\text{prefix $p$}} \_\ \_\ \_\ \_\ \_\ \_\ \_\ ],  \]
\[ \pi' =  [\underbrace{\ \underbrace{\ \_\ \_\ \_\ {\bf L}\ }_{\text{prefix $p$}} \underbrace{\times \times \times}_{\text{no LRMs}} {\bf N}\ }_{\text{prefix $q$}} \_\ \_\ \_\ ],  \]
and assume that $p$ is a positive prefix according to our optimal strategy.
Here, $L$ is the penultimate left-to-right maximum in $\pi'$ so $\pi'$ is
$p$-rejectable.  Then we can win $\pi$ (by accepting at prefix $p$) but we will
never be able to capture $\pi'$, even though it is $q$-acceptable (and $q$ must
be positive by convexity), because our strategy will already have accepted
prefix $p$ (and failed to win in this case).

Therefore, we must sum over the $\pi \in \S_N$ that are $p$-acceptable for some
strategically positive prefix $p$ but not $q$-rejectable for any strategically
positive prefix $q$.  Hence, the the probability of winning the game is
\[ \sum\limits_{\text{positive prefixes $p$}} S(p) - \sum\limits_{\text{positive prefixes $q$}} T^\circ(q) = \sum\limits_{\text{positive prefixes $p$}} S(p) - T^\circ(p) \]
which is equal to the stated expressions.
\end{proof}

\begin{example}
For the parameters $N = 30$, $I = 10$, $x = 0.5$ as in Example~\ref{e:3010}, the optimal probability of success is $0.37977053837147$.  In order to compute this, we dot product the $\dot\Delta_J(k, j) \cdot \dot{R}_J(x)$ values corresponding to the positive entries in our ``+/- diagram'' with the $\mu(k,j)$ values from Lemma~\ref{l:muwin} (and scale by $N!$).

Although it is not an issue for this computation, keep in mind that values from row $k = I+1$ with $j \leq I$ have $\mu(k,j) = 0$, so will never contribute to the win probability. %(regardless of their sign in the ``+/- diagram'').
\end{example}

\subsection{Generalities}\label{s:stratbound}

Intuitively, $\dot\Delta_J(k,j) \cdot \dot{R}_J(x)$ must be close to zero for
$(k,j)$ on the strategic boundary between positive and negative prefixes.
In order for this to occur, we must then have different signs among the
coordinates of $\dot\Delta_J(k)$ (for fixed $k$, as $J$ varies), since the
coordinates of $\dot{R}_J(x)$ are always nonnegative for $0 \leq x \leq 1$.  
In this section, we derive bounds on where these sign changes can occur.

To characterize the $k$ for which $\dot\Delta_J(k)$ can have mixed signs (as
$J$ varies), consider two classical specializations that are embedded in our
1-step reselect model.  At $x = 0$, we implement no filtering at all in our
reselect step.  Therefore, the game is the same as the classical (uniform) game
of best choice with $N$ candidates.  We have $\dot{R}_J(0) = (1, 0, \ldots, 0, 0)$
and Theorem~\ref{t:themain} specializes to 
\[ \dot\Delta_J(k) \cdot \dot{R}_J(0) = \frac{(N-1)!}{(k-1)!}  \left( 1-\sum_{i=k}^{N-1} \frac{1}{i} \right) \]
in both cases (for $k > I$ and for $k \leq I$).

As $x$ becomes arbitrarily close to $1$, we approach $\dot{R}_J(1) = (0, 0, \ldots, 0, 1)$.
Essentially, the model filters so strongly that it becomes almost certain that
the best candidate will be found among the last $N-I$ positions.  
Theorem~\ref{t:themain} specializes to 
\[ \dot\Delta_J(k) \cdot \dot{R}_J(1) = {N \choose {I}} \frac{(N-I-1)!}{(k-I-1)!}  \left( 1-\sum_{i=k-I}^{N-I-1} \frac{1}{i} \right) \]
when $k > I$ (and is always negative when $k \leq I$).  Evidently, these do not
depend on $J$ so the optimal strategies are positional (i.e. depend only on
$k$, the number of candidates interviewed so far), just as in the classical
game.

\begin{definition}
Let $k^\ast_{\mathsf{classical}}(N)$ denote the last position that is rejected under the optimal strategy for the classical game with $N$ candidates.
\end{definition}
% N.B. so our k* is equal to GM's s*-1.

From the results already worked out in the 1960's (see Gilbert--Mosteller \cite{gilbert--mosteller} for example), we know that $k^\ast_{\mathsf{classical}}(N)$ is the maximal value of $k$ such that 
\begin{equation}\label{e:x0}
 \sum_{i=k}^{N-1} \frac{1}{i} > 1, 
\end{equation}
and so $1 - \sum_{i=k}^{N-1} \frac{1}{i}$ is negative if and only if $k \leq k^\ast_{\mathsf{classical}}(N)$.
This matches the expression in our $x = 0$ model.  By the same result, we find
that $I + k^\ast_{\mathsf{classical}}(N-I)$ is the maximal value of $k$ such
that 
\begin{equation}\label{e:x1}
 \sum_{i=k-I}^{N-I-1} \frac{1}{i} > 1, 
\end{equation}
which matches the expression in our $x \rightarrow 1$ model.

Next, we extend these observations to all $J$.  We then find that the $k$ for
which $\dot\Delta_J(k)$ can have mixed signs (among the various values of $J$)
are restricted to a relatively narrow band that begins at the classical
transition $k = \frac{N}{e}$ and then extends for another $I\left(1 -
\frac{1}{e}\right)$ positions.  Therefore, these are the only positions that
can be part of the strategic boundary for $\dot\Delta_J(k,j) \cdot
\dot{R}_J(x)$.

\begin{theorem}
If $k \leq k^\ast_{\mathsf{classical}}(N)$ then $\dot\Delta_J(k)$ is negative for all $J$.  If $k > I + k^\ast_{\mathsf{classical}}(N-I)$ then $\dot\Delta_J(k)$ is positive for all $J$.
\end{theorem}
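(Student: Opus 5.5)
The sign of each coordinate $\dot\Delta_J(k)$ is controlled by an explicit harmonic factor, and I will bound that factor using the two classical thresholds (\ref{e:x0}) and (\ref{e:x1}). The prefactors in Theorem~\ref{t:themain} (binomials, factorial ratios, $\frac{1}{(k-1)!}$) are all positive. So it suffices to study
\[ h_J(k) := 1-\sum_{i=k+1-J}^{N-J}\frac{1}{i} \quad (k>I), \qquad g_J(k) := 1-\sum_{i=k}^{I-1}\frac1i-\sum_{i=I+1-J}^{N-J}\frac1i \quad (k\le I,\ J\le I). \]
The remaining coordinate, $J=I+1$ with $k\le I$, equals $-\frac{1}{(k-1)!}\frac{N!}{I(N-I)}$ after rewriting $x^I$ in the dot basis, so it is always negative.

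The key monotonicity fact is that $H_J(k):=\sum_{i=k+1-J}^{N-J}\frac1i$ has a fixed number $N-k$ of terms, and shifting the window down increases each term. Hence, for fixed $k$, $H_J(k)$ is increasing in $J$, as already noted in Lemma~\ref{l:sp_j}. This means $\dot\Delta_1$ is the most favorable (largest) coordinate and $\dot\Delta_{J_{\max}}$ the least favorable. The statement then reduces to checking the two extreme coordinates.

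\emph{Negative part.} Suppose $k\le k^\ast_{\mathsf{classical}}(N)$. If $k>I$, then $H_J(k)\ge H_1(k)=\sum_{i=k}^{N-1}\frac1i>1$ by (\ref{e:x0}), so every $h_J(k)<0$. If $k\le I$, the two sums in $g_J(k)$ concatenate to $\sum_{i=k}^{I-1}\frac1i+\sum_{i=I+1-J}^{N-J}\frac1i$. For $J=1$ this is exactly $\sum_{i=k}^{N-1}\frac1i>1$. For larger $J$ the second window has the same number of terms but is shifted down, so the total only increases. Hence $g_J(k)<0$ for $J\le I$, and the $J=I+1$ coordinate is negative as noted above. (In fact the argument shows every $k\le I$ gives all-negative coordinates except possibly through $J\le I$, which is covered.)

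\emph{Positive part.} Suppose $k>I+k^\ast_{\mathsf{classical}}(N-I)$. Then $k>I$, and by the maximality in (\ref{e:x1}) we have $\sum_{i=k-I}^{N-I-1}\frac1i\le 1$. This sum is precisely $H_{I+1}(k)$, the largest of the $H_J(k)$ for $J\le I+1$. Therefore $H_J(k)\le 1$ for all $J$, so $h_J(k)\ge0$ for every $J$, i.e.\ positive in the paper's ``$\ge 0$'' convention.

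The main obstacle is bookkeeping rather than analysis. First, one must confirm that the $k\le I$ formula from Theorem~\ref{t:rebasis_e}, together with the $x^I$ correction term, really splits into coordinates with these signs. This means expressing the $J=I+1$ coordinate as a nonpositive multiple of $\dot R_{I+1}$, using $\dot R_{I+1}(x)=x^I$. Second, one must check the monotone-window comparison carefully at the boundary $J=I+1$, where the sum $\sum_{i=I+1-J}^{N-J}$ becomes degenerate (Remark~\ref{r:agree}); I would handle that coordinate separately as above.
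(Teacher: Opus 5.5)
Your proposal is correct and follows essentially the same route as the paper: reduce the sign of each coordinate to its harmonic factor, then compare shifted windows term-by-term against the classical thresholds (\ref{e:x0}) and (\ref{e:x1}). The $J=1$ window gives the bound for negativity and the $J=I+1$ window gives the bound for positivity. You also treat separately the $J=I+1$ coordinate when $k\le I$, namely the always-negative term $-\frac{1}{(k-1)!}\frac{N!}{I(N-I)}x^I$; the paper's proof passes over this case silently, so your version is slightly more complete.
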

\begin{proof}
For each $J$, the sign of $\dot\Delta_J(k)$ is equal to the sign of $\left( 1-\sum_{i=k+1-J}^{N-J} \frac{1}{i} \right)$ when $k > I$, or $\left( 1-\sum_{i=k}^{I-1} \frac{1}{i}-\sum_{i=I+1-J}^{N-J} \frac{1}{i} \right)$ when $k \leq I$ by Theorem~\ref{t:themain}.

If $k \leq k^\ast_{\mathsf{classical}}(N)$ then we know $1 - \sum_{i=k}^{N-1} \frac{1}{i}$ is negative by the classical results described for (\ref{e:x0}).  If $k > I$, the sign of $\dot\Delta_J(k)$ is equal to the sign of 
\[ \left( 1-\sum_{i=k+1-J}^{N-J} \frac{1}{i} \right) \leq 1 - \sum_{i=k}^{N-1} \frac{1}{i} < 0, \]
while if $k \geq I$, it is equal to the sign of 
\[ \left( 1-\sum_{i=k}^{I-1} \frac{1}{i}-\sum_{i=I+1-J}^{N-J} \frac{1}{i} \right) \leq \left( 1-\sum_{i=k+1-1}^{N-1} \frac{1}{i} \right) < 0, \]
comparing term-by-term for $N-k$ total terms.

If $k > I + k^\ast_{\mathsf{classical}}(N-I)$ then we know $1 - \sum_{i=k-I}^{N-I-1} \frac{1}{i}$ is nonnegative by the classical results described for (\ref{e:x1}), and we must have $k > I$.  Therefore, the sign of $\dot\Delta_J(k)$ is equal to the sign of 
\[
 \left( 1-\sum_{i=k+1-J}^{N-J} \frac{1}{i} \right) \geq \left( 1-\sum_{i=k+1-(I+1)}^{N-(I+1)} \frac{1}{i} \right) \geq 0,
\]
as desired.
\end{proof}

\begin{corollary}\label{c:band}
In the 1-step reselect game, for sufficiently large $N$, we have that the position $k$ of first acceptance under the optimal strategy occurs within the interval $\frac{N}{e} \leq k \leq \frac{N}{e} + I \left(1-\frac{1}{e}\right)$.
\end{corollary}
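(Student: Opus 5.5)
The plan is to combine the theorem just proved with the classical asymptotics of the harmonic-sum thresholds. First I locate the first-acceptance position $k$. By the preceding theorem, every $\dot\Delta_J(k)$ is negative when $k \leq k^\ast_{\mathsf{classical}}(N)$. For each $J \leq j$, $\dot\Delta_J(k,j) \cdot \dot{R}_J(x)$ is a sum of nonpositive coefficients times the nonnegative weights $x^{J-1}(1-x)^{I-(J-1)}$, with at least one strictly negative coefficient against a positive weight for $x\in(0,1)$. (When $k \leq I$ there is also the extra term $-\frac{1}{(k-1)!}\frac{N!}{I(N-I)}x^I$, which is itself nonpositive.) So every prefix with $k \leq k^\ast_{\mathsf{classical}}(N)$ is strategically negative, and under the optimal strategy of Theorem~\ref{t:themain} no acceptance occurs there.

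Symmetrically, when $k > I + k^\ast_{\mathsf{classical}}(N-I)$ every coordinate of $\dot\Delta_J(k,j)$ is nonnegative, so the dot product is $\geq 0$ and every such prefix is positive. Hence the first winnable prefix at or beyond that position is accepted, assuming the game reaches it. This shows the strategic boundary, and therefore the first position at which acceptance can occur, lies in the band
\[ k^\ast_{\mathsf{classical}}(N) < k \leq I + k^\ast_{\mathsf{classical}}(N-I) + 1. \]

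The second step is asymptotic. From \eqref{e:x0}, $k^\ast_{\mathsf{classical}}(N)$ is the largest $k$ with $\sum_{i=k}^{N-1} 1/i > 1$. Using $\sum_{i=k}^{N-1} 1/i = \log(N/k) + O(1/k)$, we get $k^\ast_{\mathsf{classical}}(N) = N/e + O(1)$. Likewise $k^\ast_{\mathsf{classical}}(N-I) = (N-I)/e + O(1)$. The upper end of the band is then
\[ I + \frac{N-I}{e} + O(1) = \frac{N}{e} + I\left(1-\frac{1}{e}\right) + O(1). \]
This is exactly the stated interval, up to bounded additive errors. These errors are what ``for sufficiently large $N$'' absorbs, so I will read the interval as holding to leading order, or in proportions $k/N$ as in the abstract.

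The main obstacle is this bookkeeping with $O(1)$ terms. The inequalities from the theorem are sharp in $k^\ast$, but $N/e$ is not an integer and the classical threshold is known to be within one of $N/e$. I would quote the Gilbert--Mosteller estimate $|k^\ast_{\mathsf{classical}}(n) - n/e| \leq 1$ (or derive it from $\log(n/k) - 1/k \leq \sum_{i=k}^{n-1} 1/i \leq \log(n/k) + 1/k$). I would then state the conclusion with these constant slacks made explicit rather than claim exact integer bounds.

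Separately, the $j$-convexity and $k$-convexity underlying Theorem~\ref{t:themain} are only proved for simply explicable games. I would note that the corollary needs only the sign information on $\dot\Delta_J$ from the previous theorem, together with the fact that the optimal strategy is governed by these signs at the boundary, so this restriction does not enter.
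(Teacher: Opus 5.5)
Your proposal is correct and is essentially the paper's argument: the preceding theorem makes every $\dot\Delta_J(k)$ negative for $k \le k^\ast_{\mathsf{classical}}(N)$ and nonnegative for $k > I + k^\ast_{\mathsf{classical}}(N-I)$, and the classical asymptotic $k^\ast_{\mathsf{classical}}(n) \approx n/e$ turns these thresholds into the stated endpoints. The extra pieces you add go beyond what the paper writes but follow the same route: the explicit $O(1)$ slack (the paper simply equates $k^\ast_{\mathsf{classical}}(n)$ with $n/e$), the nonpositive $x^I$ term for $k \le I$, and the appeal to Theorem~\ref{t:reduction} to avoid needing convexity.
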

\begin{proof}
When evaluating $\dot\Delta_J(k,j) \cdot \dot{R}_J(x)$ for any $j$, the first acceptance cannot occur earliar than $k = \frac{N}{e}$ prior to which all the $\dot\Delta_J(k)$ have only negative coordinates, and it can't occur later than $k = \frac{N}{e} + I \left(1 + \frac{1}{e}\right)$ after which the $\dot\Delta_J(k)$ have only positive coordinates.
\end{proof}

\begin{example}
When $N = 30$ and $I = 10$, these bounds are approximately $11 \leq k \leq 17$ which agrees well with Example~\ref{e:3010}.
\end{example}

In fact, we can apply the same classical results we used to derive (\ref{e:x1}) to find the transition for any $J$, in the case where $k > I$.  That is, $J-1 + k^\ast_{\mathsf{classical}}(N-J+1)$ is the maximal value of $k$ such that 
\[ \left( 1-\sum_{i=k+1-J}^{N-J} \frac{1}{i} \right) = \text{ sign of } \dot\Delta_J(k) \]
is negative.  This implies that the boundary between positive and negative values of $\dot\Delta_J(k)$ is asymptotically a straight line with slope $1-\frac{1}{e}$, at least for the case $k > I$.  That is, for $N$ sufficiently large, we have that $\dot\Delta_J(k)$ is positive if and only if 
\[ k \geq \left(1-\frac{1}{e}\right) J + \frac{N}{e} + \frac{1}{e} - 1. \]
Note that this is not the same as the more interesting boundary in our ``+/- diagram'' from Example~\ref{e:3010}, however, because we have not accounted for effects from the dot product with $\dot{R}_J(x)$.

For the case where $k \leq I$, one could apply elementary integral estimates as in Gilbert--Mosteller \cite{gilbert--mosteller} to
$\left( 1-\sum_{i=k}^{I-1} \frac{1}{i}-\sum_{i=I+1-J}^{N-J} \frac{1}{i} \right)$
in order to obtain an asymptotic expression for the boundary between positive and negative values of $\dot\Delta_J(k)$.  We have not done this carefully, but this case does not generally produce a straight line boundary.

\subsection{Examples}

Applying results of Section~\ref{s:themain} for particular choices of $N$ and $I$, we now demonstrate how to find optimal strategies and probabilities for all $x$ simultaneously.  

First, observe that there are only finitely many possible strategies that a player may use.  Each of these are encoded by a ``+/- diagram'' of the form illustrated in Example~\ref{e:3010}.  In order to belong to the class of potentially optimal strategies there are certain constraints these must obey, such as convexity and the restrictions of Corollary~\ref{c:band}.  Nevertheless, it is clear there are only finitely many feasible diagrams/strategies for a given $N$ and $I$.  Then each such strategy gives rise to a polynomial probability of success via the formulas (\ref{e:mainprob_t}) and (\ref{e:mainprob_d}).

If we plot these together on the same axes for $0 \leq x \leq 1$, we can see how the optimal strategy evolves.  At each fixed value of $x$, the strategy corresponding to the highest curve in the collection is optimal for that value of $x$.  Thus, we can look along the top of the collection of curves to find the function that returns the win probability under optimal play.  Whenever two maximal curves cross, we have a transition in the optimal strategy.  This is illustrated for two sets of parameters in Figure~\ref{f:pc}.

\begin{figure}[h]
\includegraphics[scale=0.45]{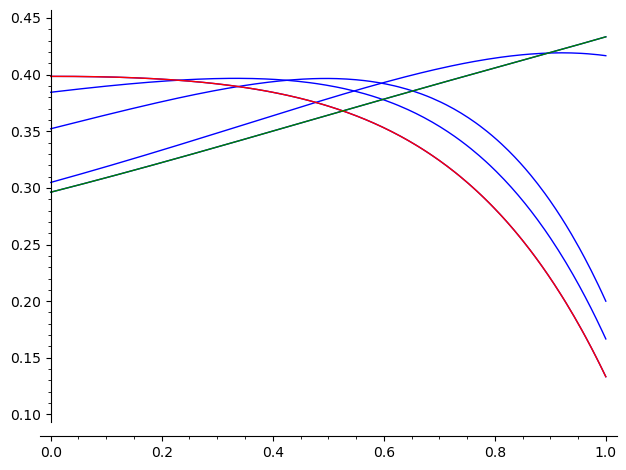}
\includegraphics[scale=0.45]{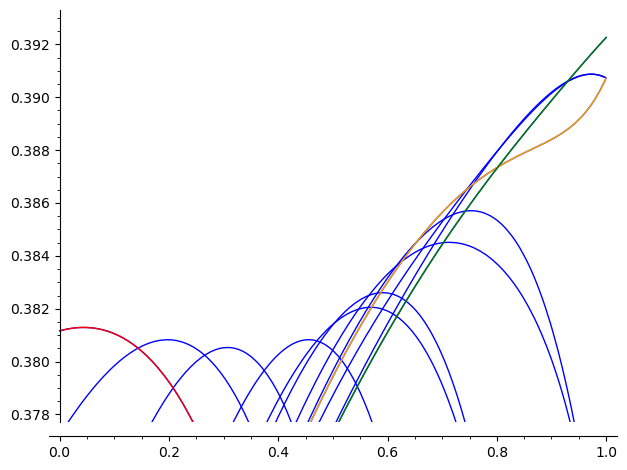}
\caption{Probability curves for (a) $N = 11$, $I = 6$ and (b) $N=24$, $I = 11$.}\label{f:pc}
\end{figure}

\begin{example}
    To be explicit, here are the data for the five curves shown in Figure~\ref{f:pc}(a).  We have only included strategies/diagrams that eventually become optimal for some value of $x$ between $0$ and $1$.

\[ \scalebox{0.5}{ $\begin{matrix}
        k \backslash j & 1 & 2 & 3 & 4 & 5 & 6 & 7 \\
        4 & - & - & - & - & - & - & - \\
        5 & + & + & + & + & + & + & + \\
\end{matrix}$ } \hspace{.2in} \scriptstyle \frac{251}{630} (x - 1)^6 - \frac{3013}{1260} (x - 1)^5 x + \frac{1121}{189} (x - 1)^4 x^2 - \frac{54}{7} (x - 1)^3 x^3 + \frac{38}{7} (x - 1)^2 x^4 - \frac{82}{45} (x - 1) x^5 + \frac{2}{15} x^6 \]

\[ \scalebox{0.5}{ $\begin{matrix}
        k \backslash j & 1 & 2 & 3 & 4 & 5 & 6 & 7 \\
        5 & - & - & - & - & - & - & - \\
        6 & + & + & + & + & + & + & + \\
        \end{matrix}$ } 
        \hspace{.2in} \scriptstyle 
        \frac{2131}{5544} (x - 1)^6 - \frac{2383}{1008} (x - 1)^5 x + \frac{4555}{756} (x - 1)^4 x^2 - \frac{905}{112} (x - 1)^3 x^3 + \frac{165}{28} (x - 1)^2 x^4 - \frac{149}{72} (x - 1) x^5 + \frac{1}{6} x^6 
\]

\[ \scalebox{0.5}{ $\begin{matrix}
        k \backslash j & 1 & 2 & 3 & 4 & 5 & 6 & 7 \\
        6 & - & - & - & - & - & - & - \\
        7 & \emptyset & \emptyset & \emptyset & \emptyset & \emptyset & \emptyset & + \\
        8 & + & + & + & + & + & + & + \\
\end{matrix}$ } 
        \hspace{.2in} \scriptstyle 
        \frac{1627}{4620} (x - 1)^6 - \frac{1879}{840} (x - 1)^5 x + \frac{743}{126} (x - 1)^4 x^2 - \frac{459}{56} (x - 1)^3 x^3 + \frac{87}{14} (x - 1)^2 x^4 - \frac{137}{60} (x - 1) x^5 + \frac{1}{5} x^6 
\]

\[ \scalebox{0.5}{ $\begin{matrix}
        k \backslash j & 1 & 2 & 3 & 4 & 5 & 6 & 7 \\
        6 & - & - & - & - & - & - & - \\
        7 & \emptyset & \emptyset & \emptyset & \emptyset & \emptyset & \emptyset & - \\
        8 & + & + & + & + & + & + & + \\
        \end{matrix}$ } \hspace{.2in} 
\scriptstyle
\frac{1207}{3960} (x - 1)^6 - \frac{55}{28} (x - 1)^5 x + \frac{2665}{504} (x - 1)^4 x^2 - \frac{319}{42} (x - 1)^3 x^3 + \frac{171}{28} (x - 1)^2 x^4 - \frac{77}{30} (x - 1) x^5 + \frac{5}{12} x^6 \]

\[ \scalebox{0.5}{ $\begin{matrix}
        k \backslash j & 1 & 2 & 3 & 4 & 5 & 6 & 7 \\
        6 & - & - & - & - & - & - & - \\
        7 & \emptyset & \emptyset & \emptyset & \emptyset & \emptyset & \emptyset & - \\
        8 & + & + & + & + & + & + & - \\
        9 & + & + & + & + & + & + & + \\
    \end{matrix}$ } \hspace{.2in} \scriptstyle
 \frac{821}{2772} (x - 1)^6 - \frac{9587}{5040} (x - 1)^5 x + \frac{1285}{252} (x - 1)^4 x^2 - \frac{2449}{336} (x - 1)^3 x^3 + \frac{35}{6} (x - 1)^2 x^4 - \frac{59}{24} (x - 1) x^5 + \frac{13}{30} x^6. \]
\end{example}

\bigskip
These results give us a fairly complete description of the solution to particular 1-step reselect games.  For future work, it would be interesting to resolve some conjectures that become apparent after computing several examples as we have done here.

\begin{itemize}
    \item We know that the optimal transition position $k$ is bounded as in Corollary~\ref{c:band}.  The ends of this band occur for the extreme values $x = 0$ and $x = 1$.  Therefore, it is natural to wonder whether the optimal transition positions $k$ are monotonically increasing in $x$?

    \item It is evident from the examples that the optimal win probability is {\em not} monotonically increasing in $x$.  Nevertheless, it appears that the win probability for the $x = 1$ model is larger than the optimal win probability of the game for any other value of $x$.  Any argument to attempt this must contend with phenomena such as the orange curve in Figure~\ref{f:pc}(b), which appears to be more complex than the other curves (e.g. containing an inflection point), whose data are
        \[ \hspace{.2in} \scalebox{0.5}{ $\begin{matrix}
        k \backslash j & 1 & 2 & 3 & 4 & 5 & 6 & 7 & 8 & 9 & 10 & 11 & 12 \\
        11 & - & - & - & - & - & - & - & - & - & - & - & - \\
        12 & + & + & + & + & + & - & - & - & - & - & - & - \\
        13 & + & + & + & + & + & + & + & - & - & - & - & - \\
        14 & + & + & + & + & + & + & + & + & + & - & - & - \\
        15 & + & + & + & + & + & + & + & + & + & + & + & - \\
        16 & + & + & + & + & + & + & + & + & + & + & + & + \\
        \end{matrix}$ } \]
    \[ \scriptstyle -\frac{1885269798611}{5397062711040} (x - 1)^{11} + \frac{4661371313}{1189828640} (x - 1)^{10} x - \frac{7429958203}{372468096} (x - 1)^9 x^2 + \frac{81121037923}{1333266480} (x - 1)^8 x^3 - \frac{8709570881}{70543200} (x - 1)^7 x^4 \]
    \[ \scriptstyle + \frac{1480843127}{8465184} (x - 1)^6 x^5 - \frac{2832259067}{16039296} (x - 1)^5 x^6 + \frac{11016021}{86632} (x - 1)^4 x^7 - \frac{8920823}{139776} (x - 1)^3 x^8 + \frac{7012567}{327600} (x - 1)^2 x^9 - \frac{7800211}{1834560} (x - 1) x^{10} + \frac{35201}{90090} x^{11}. \]

    \item It would be interesting to characterize the types of curves that can appear along the strategic boundary of our ``+/- diagrams.''  What do these diagrams look like for large $N$, with $I$ equal to a fixed proportion of $N$, say?
\end{itemize}

%%%%%%%%%%%%%%%%%%%%%%%%%%%%%%%%%%%%%%%%%%%%%%%%%%%%%%%%%%%%%%%%%%%%%
%  Acknowledgements
%%%%%%%%%%%%%%%%%%%%%%%%%%%%%%%%%%%%%%%%%%%%%%%%%%%%%%%%%%%%%%%%%%%%%
%\bigskip
\medskip
\section*{Acknowledgements}

This paper is dedicated to Edwin O'Shea, with whom the second author was
fortunate to share many mathematically inspiring conversations over eighteen
years of friendship.

\medskip
This project began at the 2025 Research Experiences for Undergraduate Faculty
(REUF) workshop at ICERM, supported by the National Science Foundation through
DMS grant 1620080.  We are grateful for contributions from Jonathan Brown, Erin
Griesenauer, and Molly Lynch, who participated in our initial working group in
2025.  We thank Alejandro Morales for helpful comments on a draft of this
manuscript.

%  Jonathan Brown (Bakersfield College), Erin Griesenauer (Eckerd College), Molly Lynch (Hollins University)

%%%%%%%%%%%%%%%%%%%%%%%%%%%%%%%%%%%%%%%%%%%%%%%%%%%%%%%%%%%%%%%%%%%%%
% ==========   Bibliography
%%%%%%%%%%%%%%%%%%%%%%%%%%%%%%%%%%%%%%%%%%%%%%%%%%%%%%%%%%%%%%%%%%%%%

\bigskip
\bibliographystyle{alpha}
\bibliography{bestchoice}

\end{document}